\documentclass{amsart}
\usepackage[latin1]{inputenc}
\usepackage{amssymb,amsmath,comment}
\usepackage[nobysame]{amsrefs}
\usepackage[usesnames,svgnames,table]{xcolor}
\usepackage{booktabs}
\usepackage{tikz,pgf}
  \pgfdeclarelayer{background}
  \pgfdeclarelayer{middle}
  \pgfsetlayers{background,middle,main}
\usepackage{enumerate}
\usepackage{multirow}
\usepackage[sc]{mathpazo}
\usepackage[colorlinks=true,
  urlcolor=MidnightBlue,
  citecolor=DarkGreen]{hyperref}
\usepackage{cleveref}
\crefformat{footnote}{#2\footnotemark[#1]#3}
\crefformat{conjecture}{Conjecture~#2#1#3}
\makeatletter
\let\orgdescriptionlabel\descriptionlabel
\renewcommand*{\descriptionlabel}[1]{%
  \let\orglabel\label
  \let\label\@gobble
  \phantomsection
  \edef\@currentlabel{#1}%
  \let\label\orglabel
  \orgdescriptionlabel{(#1)}%
}
\newtheorem{theorem}{Theorem}[section]
\newtheorem{lemma}[theorem]{Lemma}
\newtheorem{proposition}[theorem]{Proposition}
\newtheorem{conjecture}[theorem]{Conjecture}

\theoremstyle{remark}
\newtheorem{definition}[theorem]{Definition}
\newtheorem{remark}[theorem]{Remark}
\newtheorem{example}[theorem]{Example}

\newcommand{\defn}[1]{\emph{\color{DarkGreen} #1}}
\newcommand{\ie}{\text{i.e.}\;}
\DeclareMathOperator{\inv}{\mathrm{Inv}}
\DeclareMathOperator{\des}{\mathrm{Des}}
\DeclareMathOperator{\cov}{\mathrm{Cov}}

\DeclareMathOperator{\Weak}{\mathrm{Weak}}

\DeclareMathOperator{\Align}{\mathrm{Align}}

\DeclareMathOperator{\Cat}{\mathrm{Cat}}
\newcommand{\Tamari}{\mathcal{T}}
\DeclareMathOperator{\NC}{\mathrm{NC}}
\DeclareMathOperator{\NN}{\mathrm{NN}}
\DeclareMathOperator{\B}{\mathrm{B}}
\DeclareMathOperator{\SW}{\mathrm{SW}}
\newcommand{\id}{e}
\newcommand{\wo}{w_{\circ}}
\newcommand{\Pf}{\mathbf{P}}
\newcommand{\Inv}[1]{\textbf{Inv}(\mathbf{#1})}

\newcommand{\ncbody}{
  \def\xx{.25};
  \draw(1*\xx,1) node[circle,fill,scale=.33](p1){};
  \draw(1*\xx,.8) node{$1$};
  \draw(2*\xx,1) node[circle,fill,scale=.33](p2){};
  \draw(2*\xx,.8) node{$2$};
  \draw(3*\xx,1) node[circle,fill,scale=.33](p3){};
  \draw(3*\xx,.8) node{$3$};
  \draw(4*\xx,1) node[circle,fill,scale=.33](p4){};
  \draw(4*\xx,.8) node{$4$};
  \begin{pgfonlayer}{background}
    \filldraw[fill=white,draw=black](.6*\xx,.7) -- (4.4*\xx,.7) -- (4.4*\xx,1.2) -- (.6*\xx,1.2) -- (.6*\xx,.7);
    \fill[gray!50!white](1*\xx,1) circle(.07);
    \fill[gray!50!white](2*\xx,1) circle(.07);
    \fill[gray!50!white](3*\xx,1) circle(.07);
    \fill[gray!50!white](4*\xx,1) circle(.07);
    \fill[gray!50!white](2*\xx,.93) -- (3*\xx,.93) -- (3*\xx,1.07) -- (2*\xx,1.07) -- (2*\xx,.93);
  \end{pgfonlayer}
}
\newcommand{\nnbody}{
  \draw(.2,1) node[circle,fill,scale=.5](p1){};
  \draw(.6,1) node[circle,fill,scale=.5](p2){};
  \draw(.3,1.2) node[circle,fill,scale=.5](p3){};
  \draw(.5,1.2) node[circle,fill,scale=.5](p4){};
  \draw(.4,1.4) node[circle,fill,scale=.5](p5){};
  \draw(p1) -- (p3);
  \draw(p2) -- (p4);
  \draw(p3) -- (p5);
  \draw(p4) -- (p5);
  \begin{pgfonlayer}{background}
    \filldraw[fill=white,draw=black](.1,.9) -- (.7,.9) -- (.7,1.5) -- (.1,1.5) -- (.1,.9);
  \end{pgfonlayer}
}
\author{Henri M{\"u}hle}
\address{ Technische Universit{\"a}t Dresden, Institut f{\"u}r Algebra, Dresden, Germany.}
\email{henri.muehle@tu-dresden.de}
\author{Nathan Williams}
\address{University of Texas at Dallas.}
\email{nathan.f.williams@gmail.com}
\thanks{HM was partially supported by a Public Grant overseen by the French National Research Agency (ANR) as part of the ``Investissements d'Avenir'' Program (Reference: ANR-10-LABX-0098), and by Digiteo project PAAGT (Nr. 2015-3161D). NW was partially supported by a Simons Foundation Collaboration Grant.}
\title{Tamari Lattices for Parabolic Quotients of the Symmetric Group}
\keywords{Symmetric group, Coxeter group, Parabolic quotients, Tamari lattice, 231-avoiding permutations, Noncrossing partitions, Nonnesting partitions, Cluster complex, Aligned elements, Sortable elements}
\subjclass[2010]{20F55 (primary), and 06A07, 52C35 (secondary)}

\begin{document}

\begin{abstract}
    We generalize the Tamari lattice by extending the notions of $231$-avoiding permutations, noncrossing set partitions, and nonnesting set partitions to parabolic quotients of the symmetric group $\mathfrak{S}_{n}$.  We show bijectively that these three objects are equinumerous.  We show how to extend these constructions to parabolic quotients of any finite Coxeter group.  The main ingredient is a certain aligned condition of inversion sets; a concept which can in fact be generalized to any reduced expression of any element in any (not necessarily finite) Coxeter group.
\end{abstract}
\maketitle

\section{Introduction}
  \label{sec:introduction}

\subsection{Parabolic Tamari Lattices}
The \defn{Tamari lattice} $\Tamari_{n}$ was introduced by D.~Tamari as a partial order encoding the associativity of the Catalan-many binary bracketings of a word of length $n{+}1$~\cite{tamari51monoides}.   The \defn{weak order} $\Weak(\mathfrak{S}_n)$ on the group of permutations $\mathfrak{S}_{n}$ is the oriented Cayley graph of $\mathfrak{S}_n$, using the generating set $S$ of adjacent transpositions.   Rephrasing slightly, A.~Bj{\"o}rner and M.~Wachs realized $\Tamari_{n}$ as a sublattice of $\Weak(\mathfrak{S}_{n})$ by considering the subset $\mathfrak{S}_n(231)\subseteq \mathfrak{S}_n$ of $231$-avoiding permutations, whose inversions sets they characterize as ``compressed''~\cite{bjorner97shellable}*{Section~9}.  N.~Reading extended this result by noting that $\Tamari_n$ was a lattice quotient of $\Weak(\mathfrak{S}_n)$~\cite{reading06cambrian}.

We generalize the Tamari lattice from the symmetric group to its parabolic quotients.  Any $J\subseteq S$ defines the parabolic quotient $\mathfrak{S}_{n}^{J}$, consisting of those permutations $w \in \mathfrak{S}_n$ with descents only at positions not in $J$---that is, if $w(i)>w(i+1)$, then $s_i \not \in J$.  Parabolic quotients have a weak order $\Weak(\mathfrak{S}_{n}^{J})$ inherited from $\Weak(\mathfrak{S}_n)$.    We specify a subset $\mathfrak{S}_{n}^{J}(231) \subseteq \mathfrak{S}_{n}^{J}$ by introducing a generalized notion of $231$-avoidance, dependent on $J$, which can again be seen as a ``compressed'' condition on inversion sets.   

\begin{theorem}\label{thm:parabolic_tamari_lattice}
  Let $n>0$. For $J\subseteq S$, the restriction $\Weak(\mathfrak{S}_n^J)$ to $\mathfrak{S}_n^J(231)$ is a lattice, which we denote $\Tamari_n^J$.  Although $\Tamari_n^J$ is not generally a sublattice of $\Weak(\mathfrak{S}_n^J)$, it is a lattice quotient of $\Weak(\mathfrak{S}_n^J)$.
\end{theorem}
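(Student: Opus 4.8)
The plan is to realize $\Tamari_n^J$ as a lattice quotient of $\Weak(\mathfrak{S}_n^J)$ via an explicit down-projection, in the spirit of Reading's realization of the Tamari lattice as a Cambrian lattice \cite{reading06cambrian}. Throughout I identify $w \in \mathfrak{S}_n^J$ with its inversion set, so that $\Weak(\mathfrak{S}_n^J)$ becomes the poset of the associated ``$J$-biclosed'' root sets under inclusion; this is a finite lattice by the classical theory of weak order on parabolic quotients of finite Coxeter groups. From the preceding sections I take the characterization of $\mathfrak{S}_n^J(231)$ as the set of elements whose inversion set is aligned.

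The engine is the down-projection $\pi_\downarrow \colon \mathfrak{S}_n^J \to \mathfrak{S}_n^J$ sending $w$ to the largest aligned element weakly below it. Its well-definedness --- the existence of a unique maximal aligned $J$-biclosed subset of any $J$-biclosed set $R$ --- I would extract from the aligned machinery of the previous sections, by ``reducing'' $R$ column by column and checking that the result stays biclosed and stays inside $R$. From the defining property it is then immediate that $\pi_\downarrow$ is order-preserving, that $\pi_\downarrow(w) \le w$, and that $\pi_\downarrow$ fixes $\mathfrak{S}_n^J(231)$ pointwise, hence is idempotent with image exactly $\mathfrak{S}_n^J(231)$. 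Two formal consequences settle the first assertion: the image of a finite lattice under an order-preserving, contracting, idempotent self-map is again a lattice under the induced order, so $\Tamari_n^J$ is a lattice; and, since $\pi_\downarrow$ is order-preserving and contracting, its image is closed under the join of $\Weak(\mathfrak{S}_n^J)$, so the join of $\Tamari_n^J$ is simply the restricted weak-order join, while its meet is $u \wedge' v = \pi_\downarrow(u \wedge v)$.

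It remains to promote $\pi_\downarrow$ to a lattice-quotient map, that is, to show that its fibers are the classes of a lattice congruence $\equiv$ on $\Weak(\mathfrak{S}_n^J)$; the quotient $\Weak(\mathfrak{S}_n^J)/{\equiv}$ will then be isomorphic to $\Tamari_n^J$, exhibiting the latter as a lattice quotient. Each fiber $\pi_\downarrow^{-1}(u)$ is automatically order-convex with least element $u$, so by Reading's criterion \cite{reading06cambrian} what remains is to show that each fiber is an interval and that the dual upper projection $\pi_\uparrow$ is order-preserving. I expect this to be the main obstacle, as it is the only point where the geometry of the aligned condition --- rather than formal order theory --- really intervenes. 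I would argue locally, controlling the behaviour of $\pi_\downarrow$ on the rank-two intervals of $\Weak(\mathfrak{S}_n^J)$: the ``polygon property'' that the collapsed cover relations are closed under Reading's forcing order amounts to checking that $\pi_\downarrow$ never identifies the two lower edges of such an interval without also identifying the two upper edges, and dually, and this I would verify directly from how the $J$-aligned reduction responds to adjoining a single inversion.

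Finally, $\Tamari_n^J$ need not be a sublattice of $\Weak(\mathfrak{S}_n^J)$: although $\mathfrak{S}_n^J(231)$, being the set of congruence-class minima, is always closed under the weak-order join, it need not be closed under the weak-order meet once $J$ is nonempty. I would record the smallest witness explicitly --- a value of $n$, a nonempty $J$, and two elements of $\mathfrak{S}_n^J(231)$ whose weak-order meet violates the aligned condition, so that $u \wedge' v = \pi_\downarrow(u \wedge v) < u \wedge v$ --- which at once shows that $\Tamari_n^J$ is a genuine quotient rather than a sublattice, completing the proof.
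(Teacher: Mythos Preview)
Your overall architecture matches the paper's exactly: define a down-projection $\Pi_\downarrow^J$ to the maximal $(J,231)$-avoiding element below, show that this makes $\mathfrak{S}_n^J(231)$ a lattice under the induced order, and then upgrade the fibers of $\Pi_\downarrow^J$ to a lattice congruence. Your observation that an order-preserving, contracting, idempotent self-map of a finite lattice has image closed under joins is correct and is implicit in the paper's treatment; the explicit counterexample you anticipate for meets is exactly what the paper records (\,$n=4$, $J=\{s_2\}$, $w_1=4|13|2$, $w_2=3|24|1$\,).

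Two points of divergence are worth flagging. First, you gloss over the well-definedness of $\Pi_\downarrow^J$ as something to ``extract from the aligned machinery \ldots\ by reducing $R$ column by column.'' In the paper this is the technical heart (Lemma~3.6): it is proved by induction on $|\inv(w)|$, removing a single descent $(i,k)$ that participates in a $(J,231)$-pattern and then arguing, through a rather delicate five-case analysis on the positions of the indices $d,e$ with $v_d=v_k+1$ and $v_i=v_e+1$, that every $J$-compressed inversion set inside $\inv(w)$ already lies inside $\inv(u)$. Your ``column reduction'' sketch does not obviously produce this. Second, for the congruence step you propose to verify the polygon (forcing) property on rank-two subintervals. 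The paper takes a different and more concrete route: it introduces the dual notion of $(J,132)$-avoidance, proves the mirror of Lemma~3.6 to obtain an explicit up-projection $\Pi_\uparrow^J$, and then shows on cover relations that $\Pi_\downarrow^J(u)=\Pi_\downarrow^J(v)$ if and only if $\Pi_\uparrow^J(u)=\Pi_\uparrow^J(v)$ if and only if the new inversion $(i,k)$ is the ``$1$--$3$'' of some $(J,231)$-pattern in $v$. Your polygon approach should work in principle, but the paper's method has the advantage of giving an explicit description of the top of each congruence class, which you would not get for free from the forcing argument.
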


When $J=\emptyset$, we recover the Tamari lattice on $231$-avoiding permutations, and we therefore refer to $\Tamari_{n}^{J}$ as the \defn{parabolic Tamari lattice}.\footnote{This nomenclature is slightly ambiguous, since it could refer to either parabolic quotients or parabolic subgroups---but the Tamari lattice of a parabolic subgroup is a direct product of Tamari lattices, and so deserves no special name.}

\subsection{Parabolic Catalan Objects}
In recent years, many combinatorial families enumerated by the Catalan numbers have been linked to the symmetric group.  Two prototypical examples for this phenomenon are the noncrossing set partitions---which are the elements in an interval in the absolute order for $\mathfrak{S}_n$---and the nonnesting set partitions---which are order ideals in the root poset of $\mathfrak{S}_n$.  We propose new generalizations of noncrossing and nonnesting partitions to $\mathfrak{S}_{n}^{J}$, and denote the resulting sets by $\NC_{n}^{J}$ and $\NN_{n}^{J}$, respectively.  For $J=\emptyset$, we recover the classical noncrossing and nonnesting set partitions, respectively.

The well-known property that the 231-avoiding permutations, noncrossing and nonnesting partitions are equinumerous, survives our generalization to parabolic quotients.

\begin{theorem}\label{thm:parabolic_bijections}
  For $n>0$ and $J\subseteq S$, we have bijections
  \begin{displaymath}
    \mathfrak{S}_{n}^{J}(231) \simeq \NC^J_n \simeq \NN^J_n.
  \end{displaymath}
\end{theorem}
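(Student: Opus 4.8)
The plan is to use $\mathfrak{S}_n^J(231)$ as the hub, linking it to $\NN_n^J$ through inversion sets and to $\NC_n^J$ through a cover-reflection (arc diagram) construction. Write $\alpha=(\alpha_1,\dots,\alpha_\ell)$ for the composition of $n$ whose internal breakpoints correspond to the generators in $S\setminus J$, so that $\mathfrak{S}_n^J$ is exactly the set of permutations of $[n]$ that increase along each of the blocks $B_1,\dots,B_\ell$ cut out by $\alpha$. A first, elementary observation is that for such a $w$ the inversion set $\inv(w)$, viewed as a set of positive roots, contains no pair of entries lying in a common block, so it is an order ideal of the induced subposet $\Phi^J$ of the root poset supported on the roots straddling two distinct blocks; these nonnesting order ideals of $\Phi^J$ are what $\NN_n^J$ records.

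To establish $\mathfrak{S}_n^J(231)\simeq\NN_n^J$ I would invoke the aligned/compressed characterization of inversion sets set up earlier in the paper to show that, for $w\in\mathfrak{S}_n^J$, the parabolic $231$-avoidance of $w$ is equivalent to $\inv(w)$ having no nesting pair, i.e.\ to $\inv(w)\in\NN_n^J$. Since $w\mapsto\inv(w)$ is injective on all of $\mathfrak{S}_n$, this is automatically an injection; for surjectivity I would check that a nonnesting order ideal of $\Phi^J$ is biclosed as a set of positive roots, hence equal to $\inv(w)$ for a unique $w\in\mathfrak{S}_n$, and that this $w$ then automatically lies in $\mathfrak{S}_n^J$ and is parabolic-$231$-avoiding. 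To establish $\mathfrak{S}_n^J(231)\simeq\NC_n^J$ I would assign to $w$ the set of its cover reflections in $\Weak(\mathfrak{S}_n^J)$, read as an arc diagram or set partition on $[n]$, and verify that this datum is noncrossing and block-admissible, i.e.\ lies in $\NC_n^J$; the inverse I would produce by a greedy sorting procedure in the spirit of Reading's Coxeter-sortable elements, rebuilding from a parabolic noncrossing partition the unique parabolic $231$-avoider recording it. Composing these two bijections gives $\NN_n^J\simeq\NC_n^J$; alternatively one could give a direct local ``uncrossing'' bijection between nonnesting and noncrossing diagrams and check that it respects the block structure.

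I expect the $\NC_n^J$ leg to be the main obstacle. Both well-definedness---that the cover-reflection set of a parabolic $231$-avoider is genuinely noncrossing and block-admissible---and bijectivity---that the greedy reconstruction never leaves $\mathfrak{S}_n^J(231)$ and really inverts the forward map---rest on a careful understanding of how cover relations in $\Weak(\mathfrak{S}_n^J)$ interact with the blocks $B_1,\dots,B_\ell$: which reflections can occur as cover reflections, and how the noncrossing condition on the arc diagram encodes $231$-avoidance. The $\NN_n^J$ leg should, by contrast, reduce quickly to the inversion-set dictionary once the aligned-inversion-set results are in hand, the only delicate points being the passage from ``no nesting pair'' to the global parabolic $231$-avoidance condition and the fact that nonnesting ideals of $\Phi^J$ are biclosed.
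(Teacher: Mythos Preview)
Your plan for the $\NC_n^J$ leg is essentially the paper's: the bumps of the noncrossing partition attached to $w\in\mathfrak{S}_n^J(231)$ are exactly the descents of $w$ (equivalently, its cover reflections in left weak order), and the inverse is built recursively rather than by a sorting procedure, but the idea is the same.

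The $\NN_n^J$ leg, however, does not work as you describe. It is simply not true that $\inv(w)$ is an order ideal of the parabolic root poset $\Phi^J$ for $w\in\mathfrak{S}_n^J$, nor that parabolic $231$-avoidance is equivalent to $\inv(w)$ having no nesting pair. Take $n=4$, $J=\{s_2\}$. The permutation $w=4|13|2$ is $(J,231)$-avoiding, but $\inv(w)=\{(1,2),(1,3),(1,4),(3,4)\}$ is \emph{not} an order ideal of $\Phi^J$, since $(2,4)\leq(1,4)$ in root order while $(2,4)\notin\inv(w)$. In the other direction, $w=3|14|2$ is \emph{not} $(J,231)$-avoiding (the indices $1<3<4$ give a parabolic $231$-pattern), yet $\inv(w)=\{(1,2),(1,4),(3,4)\}$ contains no nesting pair. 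More globally, the length distribution of $\mathfrak{S}_4^{\{s_2\}}(231)$ is $(1,2,3,1,2,1)$ while the size distribution of order ideals of $\Phi^{\{s_2\}}$ is $(1,2,3,2,1,1)$, so $w\mapsto\inv(w)$ cannot possibly induce the bijection.

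The paper avoids this by never trying to go directly from $\mathfrak{S}_n^J(231)$ to $\NN_n^J$. Instead it proves $\NC_n^J\simeq\NN_n^J$ by an explicit inductive bijection on order ideals: one peels off the part of the ideal sitting above the first simple reflection not in $J$, builds the noncrossing partition on the remaining blocks by induction, and then reads off the bumps from the first block using the ``supported'' columns. The moral is that the nonnesting side is accessed through the shape of the parabolic root poset (a Ferrers shape), not through inversion sets of individual permutations.
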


Although we no longer have a nice closed formula in general, the parabolic nonnesting partitions enable us to write down a determinantal formula for the \defn{parabolic Catalan numbers}, see the end of \Cref{sec:nonnesting}.

\subsection{Generalizations to Finite Coxeter Groups}
A.~Bj\"orner and M.~Wachs' observation that the Tamari lattice arises as a sublattice of the weak order on $\mathfrak{S}_{n}$ was the precursor to N.~Reading's definition of the \defn{Cambrian lattices}.  Fixing a Coxeter element $c$, such a lattice may be described as the restriction of the weak order of a finite Coxeter group to certain \defn{$c$-aligned elements}, which---as with $231$-avoiding permutations---are characterized by their inversion sets~\cite{reading06cambrian}.  The Cambrian lattices thus naturally generalize the Tamari lattice to \textit{any} finite Coxeter group and \textit{any} Coxeter element.

N.~Reading's aligned elements of a Coxeter group $W$ have a surprisingly different characterization using reduced words, and in the guise of \defn{sortable elements} they provide a bridge between two famous families of objects attached to a finite Coxeter group: the \defn{$W$-noncrossing partitions} and the \defn{$W$-clusters}~\cite{reading07clusters}.  Remarkably, these objects are uniformly enumerated by a simple product formula depending on the degrees of $W$~\cite{reiner97non}*{Remark~2}.

In the second part of this article, we define analogues of the sets $\mathfrak{S}_{n}^{J}(231)$, $\NC_{n}^{J}$ and $\NN_{n}^{J}$ for all finite Coxeter groups, and we study in which cases we retain the property that these sets are equinumerous.  We present computational evidence that this is the case for groups of ``coincidental'' type $A_{n}$, $B_{n}$, $H_{3}$, and $I_{2}(m)$ (these are exactly those types for which every wall of the Coxeter complex is again a Coxeter complex).

\subsection{Further Generalizations}
The key idea in the definition of parabolic alignment is a certain forcing of inversions with respect to the root order of a particular reduced expression for the longest element in the parabolic quotient.  In the last part of this article we generalize this idea to any reduced expression of any element of any (not necessarily finite) Coxeter group.  This generalization comes at the price of losing the lattice property.

\subsection{Outline of the Paper}
This article is structured as follows.  We recall the basic notions for the symmetric group and its parabolic quotients in~\Cref{sec:symmetric_group}.  In~\Cref{sec:tamari_pq} we define $(J,231)$-avoiding permutations and characterize them in terms of their inversion sets.  We prove~\Cref{thm:parabolic_tamari_lattice} in ~\Cref{sec:parabolic_tamari_quotient}. ~\Cref{sec:noncrossing,sec:nonnesting} introduce noncrossing partitions and nonnesting partitions for parabolic quotients of the symmetric group, and culminate in the proof of~\Cref{thm:parabolic_bijections}.  \Cref{sec:generalizations} is concerned with the generalization of $231$-avoiding permutations, noncrossing and nonnesting partitions to parabolic quotients of arbitrary finite Coxeter groups.  We also propose a definition of a parabolic Coxeter-Catalan number for the coincidental types at the end of~\Cref{sec:numerology}.  In~\Cref{sec:aligned_expressions} we generalize the definition of alignment to any reduced expression of any element in any (not-necessarily finite) Coxeter group.

\section{The Symmetric Group}
  \label{sec:symmetric_group}

In this section, we recall the definitions of weak order, $231$-avoiding permutations, and parabolic quotients of the symmetric group.

\subsection{Weak Order}
  \label{sec:weak_order_symmetric_group}

The \defn{symmetric group} $\mathfrak{S}_{n}$ is the group of permutations of $[n]:=\{1,2,\ldots,n\}$. Let $S:=\{s_{1},s_{2},\ldots,s_{n-1}\}$ denote the set of \defn{adjacent transpositions} of $\mathfrak{S}_n$, \ie $s_{i}:=(i,i\!+\!1)$ for $i\in[n-1]$. It is well known that $\mathfrak{S}_{n}$ is isomorphic to the Coxeter group $A_{n-1}$, and so admits a presentation of the form
\begin{equation}\label{eq:symmetric_presentation}
  \mathfrak{S}_{n}=\left\langle S \mid s_{i}^{2}=(s_{i}s_{j})^2=(s_{i}s_{i+1})^{3}=e,\;\text{for}\;\lvert i-j\rvert>1 \right \rangle
\end{equation}
where $\id$ denotes the identity permutation.  We may specify a permutation $w \in \mathfrak{S}_n$ using one-line notation: $w=w_{1}w_{2}\ldots w_{n}$, where $w_{i}=w(i)$ for $i\in [n]$.  Its \defn{inversion set} is defined by
\begin{displaymath}
  \inv(w) := \bigl\{(i,j)\mid 1\leq i<j\leq n\;\text{and}\;w_{i}>w_{j}\bigr\}.
\end{displaymath}
The \defn{(left) weak order} is the partial order on $\mathfrak{S}_{n}$ defined by $u\leq_{S}v$ if and only if $\inv(u)\subseteq\inv(v)$; and we denote by $\Weak(\mathfrak{S}_{n})$ the partially ordered set $(\mathfrak{S}_{n},\leq_{S})$.  The \defn{cover relations} of $\Weak(\mathfrak{S}_{n})$ are relations $u\leq_{S}v$ such that $\inv(v)\setminus\inv(u)=\bigl\{(i,j)\bigr\}$ with $v_{i}=v_{j}+1$.  We usually write $u\lessdot_{S}v$ in such a case.  The poset $\Weak(\mathfrak{S}_n)$ is a lattice~\cite{bjorner05combinatorics}*{Theorem~3.2.1}, so that any two elements have a greatest lower bound and a least upper bound.  In particular, there is a unique maximal element $\wo$ in $\Weak(\mathfrak{S}_{n})$ whose one-line notation is $\wo=n(n-1)\ldots 1$.  We refer the reader to~\cite{bjorner05combinatorics}*{Section~3} for more background on the weak order, in the broader context of Coxeter groups.

A permutation $w\in\mathfrak{S}_{n}$ is \defn{$231$-avoiding} if there exists no triple $i<j<k$ such that $w_{k}<w_{i}<w_{j}$.  Let $\mathfrak{S}_{n}(231)$ denote the set of $231$-avoiding permutations of $\mathfrak{S}_{n}$.  Lemma~9.8 in \cite{bjorner97shellable} implies that the $231$-avoiding permutations can be characterized by their inversion sets.  More precisely, $w\in\mathfrak{S}_{n}$ is $231$-avoiding if and only if its inversion set is \defn{compressed}, \ie if $i<j<k$ and $(i,k)\in\inv(w)$, then $(i,j)\in\inv(w)$.

\begin{remark}\label{rem:foreshadowing}
  Let $w\in\mathfrak{S}_{n}$ and choose $i<j<k$.  It is immediate to verify that whenever $(i,k)\in\inv(w)$, then we also have $(i,j)\in\inv(w)$ or $(j,k)\in\inv(w)$ (or both).  We may interpret the property that $\inv(w)$ is compressed as stating that $\inv(w)$ is aligned with respect to the lexicographic order on all transpositions.  This perspective foreshadows N.~Reading's definition of aligned elements in a Coxeter group~\cite{reading07clusters}*{Section~4}.  We generalize this notion in ~\Cref{def:parabolic_alignment,def:general_alignment}.
\end{remark}

The next result identifies the Tamari lattice $\Tamari_n$ as the subposet of the weak order on $\mathfrak{S}_{n}$ induced by the $231$-avoiding permutations.  The reader may take this as the definition of $\Tamari_n$.

\begin{theorem}[\cite{bjorner97shellable}*{Theorem~9.6(ii)}]\label{thm:231_avoiding_tamari}
  For $n>0$ the poset $\Weak\bigl(\mathfrak{S}_{n}(231)\bigr)$ is isomorphic to the Tamari lattice $\Tamari_{n}$.
\end{theorem}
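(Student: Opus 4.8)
The plan is to recast both posets as componentwise orders on integer vectors, and to match the two vector families by a recursive bijection.

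\emph{Step 1: vectorizing the weak order side.} Using the ``compressed'' characterization recalled above, I would encode each $w\in\mathfrak{S}_n(231)$ by $a(w):=(a_1(w),\dots,a_{n-1}(w))$ with $a_i(w):=\#\{j : (i,j)\in\inv(w)\}$. Since $\inv(w)$ is compressed it is recovered from $a(w)$ by $\inv(w)=\{(i,j):i<j\le i+a_i(w)\}$, and therefore $\inv(u)\subseteq\inv(v)$ if and only if $a(u)\le a(v)$ componentwise. Hence $\Weak\bigl(\mathfrak{S}_n(231)\bigr)$ is isomorphic to $(\mathcal{A}_n,\le)$, where $\mathcal{A}_n$ is the image of $a$ and $\le$ is the componentwise order. (One can describe $\mathcal{A}_n$ explicitly by a biconvexity condition on inversion sets, but this is not strictly needed.)

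\emph{Step 2: a recursive bijection to binary trees.} I would define $\Phi$ from $\mathfrak{S}_n(231)$ to binary trees with $n$ internal nodes by peeling off the maximum: if $w=L\,n\,R$ with $n$ in position $p$, then $231$-avoidance forces $L$ to be a $231$-avoiding permutation of $\{1,\dots,p-1\}$ and $R$ (after subtracting $p-1$) a $231$-avoiding permutation of $\{1,\dots,n-p\}$; let $\Phi(w)$ be the tree whose root has left subtree $\Phi(L)$ and right subtree $\Phi(R)$. The Catalan recursion shows $\Phi$ is a bijection. One then computes how $a(w)$ splits along this peeling — a block of entries coming from $L$, a middle entry recording $n-p$, and a block coming from $R$ — and checks that this is exactly the recursion obeyed by the classical Tamari bracketing vector $\beta(\Phi(w))$, up to a fixed reindexing. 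This identifies $\mathcal{A}_n$ with the set of bracketing vectors.

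\emph{Step 3: conclusion.} It is classical that $\Tamari_n$ — which, under the definition adopted here, is the order on bracketings generated by the associativity moves $(xy)z\mapsto x(yz)$ — is realized as the componentwise order on bracketing vectors (via the standard bracketing-vector or Dyck-path descriptions of the Tamari lattice). Combining this with Steps 1 and 2 yields $\Weak\bigl(\mathfrak{S}_n(231)\bigr)\cong\Tamari_n$.

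The main obstacle is Step 2: the weak order does \emph{not} respect the peeling decomposition (already for $n=3$ one has $132\le_S 321$ with the maximum in different positions), so one cannot prove order-preservation by a naive induction on the decomposition; instead one must verify that the invariant $a$ transforms compatibly under peeling, and then invoke the precomputed fact that $\Tamari_n$ is componentwise order on these vectors. A related subtlety worth flagging: the cover relations of $\Weak\bigl(\mathfrak{S}_n(231)\bigr)$ are in general not single adjacent transpositions (again $132\lessdot 321$), mirroring the fact that covers of the componentwise order on $\mathcal{A}_n$ — equivalently, Tamari right rotations — need not change one coordinate by $1$; so the covers must be checked rather than assumed. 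An alternative route bypasses bracketing vectors and argues directly that $\Phi$ and $\Phi^{-1}$ preserve order, but its content is exactly the same peeling-compatibility, now phrased for pairs $u\le_S v$.
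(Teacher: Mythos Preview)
The paper does not prove this statement at all: it is quoted as Theorem~9.6(ii) of Bj\"orner--Wachs and immediately followed by the remark that ``the reader may take this as the definition of $\Tamari_n$.'' There is therefore no proof in the paper to compare your proposal against.

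That said, your outline is a reasonable reconstruction of the Bj\"orner--Wachs argument. Step~1 is correct and is essentially their Lemma~9.8 rephrased. In Step~2 your peeling decomposition is right (231-avoidance forces every value left of $n$ to be smaller than every value right of $n$), and the identification of your vector $a(w)$ with the Tamari bracketing vector is exactly what Bj\"orner and Wachs establish. The caveats you flag about covers and non-naturality of the decomposition with respect to weak order are accurate, and your proposed workaround---reduce both sides to componentwise order on the same set of vectors, rather than arguing inductively on the peeling---is the standard and correct way around them. The only soft spot is that Step~2 is asserted rather than carried out; the computation that $a(w)$ obeys the bracketing-vector recursion is short but should be written explicitly if you want a self-contained proof.
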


\subsection{Parabolic Quotients}
  \label{sec:parabolic_quotients}

Any subset $J\subseteq S$ naturally generates a subgroup of $\mathfrak{S}_{n}$ isomorphic to a direct product of symmetric groups of smaller rank.  We call such a subgroup \defn{parabolic}, and we denote it by $(\mathfrak{S}_{n})_{J}$.  We define the \defn{parabolic quotient} of $\mathfrak{S}_{n}$ with respect to $J$ by
\begin{displaymath}
  \mathfrak{S}_{n}^{J} := \{w\in\mathfrak{S}_{n}\mid w<_{S}ws\;\text{for all}\;s\in J\}.
\end{displaymath}
The set $\mathfrak{S}_{n}^{J}$ thus consists of the minimal length representatives of the right cosets of the corresponding parabolic subgroup.  By~\cite{bjorner05combinatorics}*{Proposition~2.4.4}, any permutation $w\in\mathfrak{S}_{n}$ can be uniquely written as $w=w^{J}\cdot w_{J}$, for some $w^{J}\in\mathfrak{S}_{n}^{J}$ and $w_{J}\in(\mathfrak{S}_{n})_{J}$.  In particular, the maximal element $\wo\in \mathfrak{S}_n$ is written as $\wo=\wo^J \cdot (\wo)_J,$ where $\wo^J$ is the longest element of $\mathfrak{S}_{n}^{J}$ and $(\wo)_J$ is the longest element of $(\mathfrak{S}_n)_J$.

Since any element of $\mathfrak{S}_{n}^{J}$ is itself a permutation, we may consider  $\Weak(\mathfrak{S}_{n}^{J})$---the restriction of the weak order on $\mathfrak{S}_n$ to the parabolic quotient.  It follows from \cite{bjorner88generalized}*{Theorem~4.1} that this poset is isomorphic to the weak order interval $[\id,\wo^{J}]$, and is therefore a lattice.

\section{Tamari Lattices for Parabolic Quotients of $\mathfrak{S}_n$}
  \label{sec:tamari_pq}

In this section, we define the set $\mathfrak{S}_{n}^{J}(231)$ of $231$-avoiding permutations of the parabolic quotient $\mathfrak{S}_{n}^{J}$.  We characterize the inversion sets of the permutations in $\mathfrak{S}_n^J(231)$ and prove that $\Weak\bigl(\mathfrak{S}_n^J(231)\bigr)$ is a lattice.

\subsection{$231$-Avoidance}
  \label{sec:j_aligned}

Let $J:=S\setminus\{s_{j_{1}},s_{j_{2}},\ldots,s_{j_{r}}\}$, and let $\B(J)$ be the set partition of $[n]$ given by the parts
\begin{displaymath}
  \bigl\{\{1,\ldots,j_{1}\},\{j_{1}+1,\ldots,j_{2}\},\ldots,\{j_{r-1}+1,\ldots,j_{r}\},\{j_{r}+1,\ldots,n\}\bigr\}.
\end{displaymath}
We call the parts of $\B(J)$ the \defn{$J$-regions}. We indicate the parts of $\B(J)$ occuring in the one-line notation of a permutation $w\in\mathfrak{S}_{n}^{J}$ by vertical bars.

\begin{lemma}\label{lem:increasing_regions}
  If $w\in\mathfrak{S}_{n}^{J}$, then the one-line notation of $w$ has the form
  \begin{displaymath}
    w = w_{1}<\cdots<w_{j_{1}}\mid w_{j_{1}+1}<\cdots<w_{j_{2}}\mid\cdots\mid w_{j_{r}+1}<\cdots<w_{n}.
  \end{displaymath}
\end{lemma}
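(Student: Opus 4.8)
The plan is to reduce the statement to the standard description of the parabolic quotient $\mathfrak{S}_n^J$ in terms of descents, and then to verify the purely combinatorial fact that every pair of adjacent positions lying inside one $J$-region is indexed by a simple reflection in $J$.

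First I would use the descent characterization of $\mathfrak{S}_n^J$ recalled in the introduction: a permutation $w$ belongs to $\mathfrak{S}_n^J$ if and only if it has a descent only at positions not in $J$, i.e.\ $w_i<w_{i+1}$ whenever $s_i\in J$. (If one prefers to argue from the definition $\mathfrak{S}_n^J=\{w\mid w<_S ws\text{ for all }s\in J\}$ directly, it is enough to observe that the one-line notation of $ws_i$ is obtained from that of $w$ by exchanging the entries in positions $i$ and $i+1$.) So the task is to show that, under this descent condition, $w$ is increasing on each $J$-region.

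Next I would translate the condition ``$s_i\in J$'' into the block structure of $\B(J)$. Setting $j_0:=0$ and $j_{r+1}:=n$, the $J$-regions are exactly the intervals $\{j_{k-1}+1,\dots,j_k\}$ for $1\le k\le r+1$. Two consecutive positions $i$ and $i+1$ lie in a common $J$-region if and only if $j_{k-1}<i<j_k$ for some $k$, which happens if and only if $i\notin\{j_1,\dots,j_r\}$; and since $J=S\setminus\{s_{j_1},\dots,s_{j_r}\}$, this is precisely the condition $s_i\in J$. Combining this with the previous paragraph, for $w\in\mathfrak{S}_n^J$ and any pair of consecutive positions $i,i+1$ inside a single $J$-region we obtain $w_i<w_{i+1}$; letting $i$ range over all such pairs within a fixed block yields $w_{j_{k-1}+1}<\cdots<w_{j_k}$, which is the asserted form.

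I do not expect a genuine obstacle here: the result is essentially a reformulation of the classical descent description of $\mathfrak{S}_n^J$, and the only point that requires attention is the bookkeeping matching the interior adjacencies of a $J$-region with the simple reflections in $J$ — equivalently, noting that a descent of $w$ strictly inside a $J$-region would occur at some position $i\notin\{j_1,\dots,j_r\}$, hence with $s_i\in J$, contradicting $w\in\mathfrak{S}_n^J$.
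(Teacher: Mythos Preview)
Your proposal is correct and follows essentially the same line as the paper: both argue by contrapositive that a descent at a position $i$ interior to a $J$-region forces $s_i\in J$ and hence $ws_i<_S w$, contradicting $w\in\mathfrak{S}_n^J$. The only cosmetic difference is that you invoke the descent characterization stated in the introduction, while the paper unpacks the definition $w<_S ws$ directly via inversion sets; you even sketch that alternative yourself.
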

\begin{proof}
  If this were not the case, then there would be some index $i\in[n]$ such that $w_{i}>w_{i+1}$, and $j_{l}+1<i<j_{l+1}-1$ for some $l\in\{0,1,\ldots,r\}$, where $j_{0}=1$ and $j_{r+1}=n$. It follows that $(i,i+1)\in\inv(w)$ and $(i,i+1)\notin\inv(ws_{i})$. By definition, it follows that $ws_{i}<_{S}w$, which contradicts the assumption that $w\in\mathfrak{S}_{n}^{J}$, since $s_{i}\in J$.
\end{proof}

\begin{definition}\label{def:parabolic_231}
    A permutation $w\in\mathfrak{S}_{n}^{J}$ contains a \defn{($J$,$231$)-pattern} if there exist three indices $i<j<k$, all of which lie in different $J$-regions, such that $w_{k}<w_{i}<w_{j}$ and $w_{i}=w_{k}+1$.  We say that $w$ is \defn{($J$,$231$)-avoiding} if it does not contain a ($J$,$231$)-pattern; and we denote the set of all ($J$,$231$)-avoiding permutations of $\mathfrak{S}_{n}^{J}$ by $\mathfrak{S}_{n}^{J}(231)$.
\end{definition}

\begin{figure}
  \centering
  \begin{tikzpicture}
    \draw(-1.15,-2) -- (11.6,-2) -- (11.6,4) -- (-1.15,4) -- (-1.15,-2);
    \draw(3.15,-2) -- (3.15,4);
    \draw(7.35,-2) -- (7.35,4);
    \draw(1,1) node{\begin{tikzpicture}\small
        \def\x{1.5};
        \def\y{1};
        \draw(2*\x,1*\y) node[fill=gray!50!white](n1){$1|23|4$};
        \draw(1.5*\x,2*\y) node[fill=gray!50!white](n2){$2|13|4$};
        \draw(2.5*\x,2*\y) node[fill=gray!50!white](n3){$1|24|3$};
        \draw(1*\x,3*\y) node[fill=gray!50!white](n4){$3|12|4$};
        \draw(2*\x,3*\y) node[fill=gray!50!white](n5){$2|14|3$};
        \draw(3*\x,3*\y) node(n6)[fill=gray!50!white]{$1|34|2$};
        \draw(1*\x,4*\y) node(n7)[fill=gray!50!white]{$4|12|3$};
        \draw(2*\x,4*\y) node(n8){$3|14|2$};
        \draw(3*\x,4*\y) node(n9){$2|34|1$};
        \draw(1.5*\x,5*\y) node(n10)[fill=gray!50!white]{$4|13|2$};
        \draw(2.5*\x,5*\y) node(n11)[fill=gray!50!white]{$3|24|1$};
        \draw(2*\x,6*\y) node(n12)[fill=gray!50!white]{$4|23|1$};
        \draw(n1) -- (n2);
        \draw(n1) -- (n3);
        \draw(n2) -- (n4);
        \draw(n2) -- (n5);
        \draw(n3) -- (n5);
        \draw(n3) -- (n6);
        \draw(n4) -- (n7);
        \draw(n5) -- (n8);
        \draw(n6) -- (n9);
        \draw(n7) -- (n10);
        \draw(n8) -- (n10);
        \draw(n8) -- (n11);
        \draw(n9) -- (n11);
        \draw(n10) -- (n12);
        \draw(n11) -- (n12);
      \end{tikzpicture}};
    \draw(5.25,1) node{\begin{tikzpicture}\small
        \def\x{1.5};
        \def\y{1};
        \draw(2*\x,1*\y) node(n1){\begin{tikzpicture}\tiny
            \ncbody
          \end{tikzpicture}};
        \draw(1.5*\x,2*\y) node(n2){\begin{tikzpicture}\tiny
            \ncbody
            \draw(p1) .. controls (1.1*\xx,.9) and (1.4*\xx,.9) .. (1.5*\xx,1) .. controls (1.6*\xx,1.1) and (1.9*\xx,1.1) .. (p2);
          \end{tikzpicture}};
        \draw(2.5*\x,2*\y) node(n3){\begin{tikzpicture}\tiny
            \ncbody
            \draw(p3) .. controls (3.1*\xx,.9) and (3.4*\xx,.9) .. (3.5*\xx,1) .. controls (3.6*\xx,1.1) and (3.9*\xx,1.1) .. (p4);
          \end{tikzpicture}};
        \draw(1*\x,3*\y) node(n4){\begin{tikzpicture}\tiny
            \ncbody
            \draw(p1) .. controls (1.1*\xx,.9) and (1.4*\xx,.9) .. (1.5*\xx,1) .. controls (1.9*\xx,1.15) and (2.6*\xx,1.15) .. (p3);
          \end{tikzpicture}};
        \draw(2*\x,3*\y) node(n5){\begin{tikzpicture}\tiny
            \ncbody
            \draw(p1) .. controls (1.1*\xx,.9) and (1.4*\xx,.9) .. (1.5*\xx,1) .. controls (1.6*\xx,1.1) and (1.9*\xx,1.1) .. (p2);
            \draw(p3) .. controls (3.1*\xx,.9) and (3.4*\xx,.9) .. (3.5*\xx,1) .. controls (3.6*\xx,1.1) and (3.9*\xx,1.1) .. (p4);
          \end{tikzpicture}};
        \draw(3*\x,3*\y) node(n6){\begin{tikzpicture}\tiny
            \ncbody
            \draw(p2) .. controls (2.4*\xx,.85) and (3.1*\xx,.85) .. (3.5*\xx,1) .. controls (3.6*\xx,1.1) and (3.9*\xx,1.1) .. (p4);
          \end{tikzpicture}};
        \draw(1*\x,4*\y) node(n7){\begin{tikzpicture}\tiny
            \ncbody
            \draw(p1) .. controls (1.1*\xx,.9) and (1.4*\xx,.9) .. (1.5*\xx,1) .. controls (2.2*\xx,1.2) and (3.3*\xx,1.2) .. (p4);
          \end{tikzpicture}};
        \draw(1.5*\x,5*\y) node(n10){\begin{tikzpicture}\tiny
            \ncbody
            \draw(p1) .. controls (1.1*\xx,.9) and (1.4*\xx,.9) .. (1.5*\xx,1) .. controls (1.9*\xx,1.15) and (2.6*\xx,1.15) .. (p3);
            \draw(p3) .. controls (3.1*\xx,.9) and (3.4*\xx,.9) .. (3.5*\xx,1) .. controls (3.6*\xx,1.1) and (3.9*\xx,1.1) .. (p4);
          \end{tikzpicture}};
        \draw(2.5*\x,5*\y) node(n11){\begin{tikzpicture}\tiny
            \ncbody
            \draw(p1) .. controls (1.1*\xx,.9) and (1.4*\xx,.9) .. (1.5*\xx,1) .. controls (1.6*\xx,1.15) and (1.9*\xx,1.15) .. (p2);
            \draw(p2) .. controls (2.4*\xx,.85) and (3.1*\xx,.85) .. (3.5*\xx,1) .. controls (3.6*\xx,1.1) and (3.9*\xx,1.1) .. (p4);
          \end{tikzpicture}};
        \draw(2*\x,6*\y) node(n12){\begin{tikzpicture}\tiny
            \ncbody
            \draw(p1) .. controls (1.1*\xx,.9) and (1.4*\xx,.9) .. (1.5*\xx,1) .. controls (1.9*\xx,1.15) and (2.6*\xx,1.15) .. (p3);
            \draw(p2) .. controls (2.4*\xx,.85) and (3.1*\xx,.85) .. (3.5*\xx,1) .. controls (3.6*\xx,1.1) and (3.9*\xx,1.1) .. (p4);
          \end{tikzpicture}};
        \draw(n1) -- (n2);
        \draw(n1) -- (n3);
        \draw(n2) -- (n4);
        \draw(n2) -- (n5);
        \draw(n3) -- (n5);
        \draw(n3) -- (n6);
        \draw(n4) -- (n7);
        \draw(n5) -- (2*\x,4*\y);
        \draw(n6) -- (3*\x,4*\y);
        \draw(n7) -- (n10);
        \draw(2*\x,4*\y) -- (n10);
        \draw(2*\x,4*\y) -- (n11);
        \draw(3*\x,4*\y) -- (n11);
        \draw(n10) -- (n12);
        \draw(n11) -- (n12);
      \end{tikzpicture}};
    \draw(9.5,1) node{\begin{tikzpicture}\small
        \def\x{1.5};
        \def\y{1};
        \draw(2*\x,1*\y) node(n1){\begin{tikzpicture}\tiny
            \nnbody
          \end{tikzpicture}};
        \draw(1.5*\x,2*\y) node(n2){\begin{tikzpicture}\tiny
            \nnbody
            \draw(.4,1.4) node[circle,draw,fill=white,scale=.5]{};
          \end{tikzpicture}};
        \draw(2.5*\x,2*\y) node(n3){\begin{tikzpicture}\tiny
            \nnbody
            \draw(.5,1.2) node[circle,draw,fill=white,scale=.5]{};
            \draw(.4,1.4) node[circle,draw,fill=white,scale=.5]{};
          \end{tikzpicture}};
        \draw(1*\x,3*\y) node(n4){\begin{tikzpicture}\tiny
            \nnbody
            \draw(.3,1.2) node[circle,draw,fill=white,scale=.5]{};
            \draw(.4,1.4) node[circle,draw,fill=white,scale=.5]{};
          \end{tikzpicture}};
        \draw(2*\x,3*\y) node(n5){\begin{tikzpicture}\tiny
            \nnbody
            \draw(.3,1.2) node[circle,draw,fill=white,scale=.5]{};
            \draw(.5,1.2) node[circle,draw,fill=white,scale=.5]{};
            \draw(.4,1.4) node[circle,draw,fill=white,scale=.5]{};
          \end{tikzpicture}};
        \draw(3*\x,3*\y) node(n6){\begin{tikzpicture}\tiny
            \nnbody
            \draw(.6,1) node[circle,draw,fill=white,scale=.5]{};
            \draw(.5,1.2) node[circle,draw,fill=white,scale=.5]{};
            \draw(.4,1.4) node[circle,draw,fill=white,scale=.5]{};
          \end{tikzpicture}};
        \draw(1*\x,4*\y) node(n7){\begin{tikzpicture}\tiny
            \nnbody
            \draw(.2,1) node[circle,draw,fill=white,scale=.5]{};
            \draw(.3,1.2) node[circle,draw,fill=white,scale=.5]{};
            \draw(.4,1.4) node[circle,draw,fill=white,scale=.5]{};
          \end{tikzpicture}};
        \draw(1.5*\x,5*\y) node(n10){\begin{tikzpicture}\tiny
            \nnbody
            \draw(.2,1) node[circle,draw,fill=white,scale=.5]{};
            \draw(.3,1.2) node[circle,draw,fill=white,scale=.5]{};
            \draw(.5,1.2) node[circle,draw,fill=white,scale=.5]{};
            \draw(.4,1.4) node[circle,draw,fill=white,scale=.5]{};
          \end{tikzpicture}};
        \draw(2.5*\x,5*\y) node(n11){\begin{tikzpicture}\tiny
            \nnbody
            \draw(.6,1) node[circle,draw,fill=white,scale=.5]{};
            \draw(.3,1.2) node[circle,draw,fill=white,scale=.5]{};
            \draw(.5,1.2) node[circle,draw,fill=white,scale=.5]{};
            \draw(.4,1.4) node[circle,draw,fill=white,scale=.5]{};
          \end{tikzpicture}};
        \draw(2*\x,6*\y) node(n12){\begin{tikzpicture}\tiny
            \nnbody
            \draw(.2,1) node[circle,draw,fill=white,scale=.5]{};
            \draw(.6,1) node[circle,draw,fill=white,scale=.5]{};
            \draw(.3,1.2) node[circle,draw,fill=white,scale=.5]{};
            \draw(.5,1.2) node[circle,draw,fill=white,scale=.5]{};
            \draw(.4,1.4) node[circle,draw,fill=white,scale=.5]{};
          \end{tikzpicture}};
        \draw(n1) -- (n2);
        \draw(n1) -- (n3);
        \draw(n2) -- (n4);
        \draw(n2) -- (n5);
        \draw(n3) -- (n5);
        \draw(n3) -- (n6);
        \draw(n4) -- (n7);
        \draw(n5) -- (2*\x,4*\y);
        \draw(n6) -- (3*\x,4*\y);
        \draw(n7) -- (n10);
        \draw(2*\x,4*\y) -- (n10);
        \draw(2*\x,4*\y) -- (n11);
        \draw(3*\x,4*\y) -- (n11);
        \draw(n10) -- (n12);
        \draw(n11) -- (n12);
      \end{tikzpicture}};
  \end{tikzpicture}
  \caption{The parabolic Tamari lattice $\Tamari_4^{\{s_2\}}$.  The poset on the left has every permutation of the parabolic quotient $\mathfrak{S}_4^{\{s_2\}}$ with the $\bigl(\{s_{2}\}$,$231\bigr)$-avoiding elements marked in gray; the posets in the middle and on the right are labeled by the $\{s_{2}\}$-noncrossing partitions and the $\{s_{2}\}$-nonnesting partitions of a four-element set, respectively.}
  \label{fig:j_2_tamari}
\end{figure}

\begin{example} \label{eq:j_2}
    The left image in~\Cref{fig:j_2_tamari} shows $\Weak\bigl(\mathfrak{S}_{4}^{\{s_{2}\}}\bigr)$, where the $\bigl(\{s_{2}\}$,$231\bigr)$-avoiding permutations have been shaded in gray.  Notice that the longest permutation $4|23|1$ is not $231$-avoiding, since it contains the subsequence $231$.  However, since the $2$ and the $3$ lie in the same $\{s_{2}\}$-region, this sequence does not form an $\bigl(\{s_{2}\}$,$231\bigr)$-pattern.
\end{example}

The following \Cref{lem:j_231_well_defined} shows that \Cref{def:parabolic_231} is a generalization of $231$-avoiding permutations by showing that $\mathfrak{S}_{n}^{\emptyset}(231)=\mathfrak{S}_{n}(231)$.

\begin{proposition}\label{lem:j_231_well_defined}
  If $w\in\mathfrak{S}_{n}$ has a $231$-pattern, then there exist indices $i<j<k$ such that $w_{k}<w_{i}<w_{j}$ and $w_{i}=w_{k}+1$. Consequently, $\mathfrak{S}_n^\emptyset(231) = \mathfrak{S}_n(231)$.
\end{proposition}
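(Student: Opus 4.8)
The plan is to take an arbitrary $231$-pattern of $w$ and deform it into one in which the ``$2$'' and the ``$1$'' are consecutive values. Call $w_i - w_k$ the \emph{gap} of a $231$-pattern $i<j<k$ (so that $w_k < w_i < w_j$); the gap is always $\ge 1$, and it equals $1$ precisely when $w_i = w_k + 1$. Since $w$ has a $231$-pattern, it has one of minimal gap, say $i<j<k$, and I claim this minimal gap is $1$.

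Suppose instead the minimal gap is $\ge 2$. Then $v := w_k + 1$ satisfies $w_k < v < w_i$, and I set $p := w^{-1}(v)$. Because $w_k < v < w_i < w_j$, the position $p$ is different from $i$, $j$ and $k$, so either $p < j$ or $p > j$. If $p < j$, then $p < j < k$ and $w_k < v = w_p < w_i < w_j$, so $(p,j,k)$ is a $231$-pattern of gap $v - w_k = 1$, contradicting minimality. If $p > j$, then $i < j < p$ and $w_p = v < w_i < w_j$, so $(i,j,p)$ is a $231$-pattern of gap $w_i - v = (w_i - w_k) - 1$, again contradicting minimality. Hence the minimal gap is $1$, which produces indices $i<j<k$ with $w_k < w_i < w_j$ and $w_i = w_k + 1$, as claimed.

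The consequence is then immediate. When $J = \emptyset$, every block of $\B(J)$ is a singleton, so the requirement in \Cref{def:parabolic_231} that $i,j,k$ lie in distinct $J$-regions is automatic; thus a $(\emptyset,231)$-pattern is exactly a $231$-pattern $i<j<k$ with the extra condition $w_i = w_k + 1$. By the first part of the statement, $w$ contains a $231$-pattern if and only if it contains one with $w_i = w_k+1$, i.e.\ if and only if it contains a $(\emptyset,231)$-pattern; since moreover $\mathfrak{S}_n^{\emptyset} = \mathfrak{S}_n$, we conclude $\mathfrak{S}_n^{\emptyset}(231) = \mathfrak{S}_n(231)$.

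There is no real obstacle here; the only points that require care are checking that each replacement triple is genuinely a $231$-pattern, that its gap is \emph{strictly} smaller (which is exactly why $v$ must be taken strictly between $w_k$ and $w_i$), and that $p\notin\{i,j,k\}$ so that the dichotomy $p<j$ versus $p>j$ is exhaustive.
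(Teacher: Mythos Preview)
Your proof is correct and follows essentially the same approach as the paper: choose a $231$-pattern minimizing the gap $w_i-w_k$, locate a value strictly between $w_k$ and $w_i$, and use its position (relative to $j$) to build a $231$-pattern of smaller gap. The only cosmetic difference is that you fix the intermediate value to be $w_k+1$, whereas the paper takes an arbitrary $w_d$ with $w_k<w_d<w_i$; both choices yield the desired contradiction.
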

\begin{proof}
  Let $i<j<k$ be indices such that $w_{k}<w_{i}<w_{j}$, and choose them in such a way that $w_{i}-w_{k}$ is minimal. We claim that in this case $w_{i}=w_{k}+1$.  Assume the opposite. Then there is some $d\in [n]$ with $w_{k}<w_{d}<w_{i}$. If $d<j$, then $(d,j,k)$ forms a $231$-pattern in $w$.  But $w_{d}-w_{k}<w_{i}-w_{k}$, which contradicts the choice of $(i,j,k)$. If $d>j$, then $(i,j,d)$ forms a $231$-pattern in $w$.  But $w_{i}-w_{d}<w_{i}-w_{k}$, which again contradicts the choice of $(i,j,k)$.
\end{proof}

\begin{remark}
  After an extended abstract of this paper appeared in~\cite{muehle15tamari}, R.~Proctor and M.~Willis gave a different definition of parabolic pattern avoidance~\cite{proctor18convexity}.  More precisely, if $R=\{j_{1},j_{2},\ldots,j_{r}\}$, then they say that $w\in\mathfrak{S}_{n}^{J}$ is \defn{$R$-$312$-containing} if there exists $h\in[r-1]$ and indices $a,b,c\in[n]$ with $a\leq j_{h}<b\leq j_{h+1}<c$ such that $w_{b}<w_{c}<w_{a}$.  Any element of $\mathfrak{S}_{n}^{J}$ that is not $R$-$312$-containing is \defn{$R$-$312$-avoiding}.  They suggested the term ``parabolic Catalan number'' for the cardinality of the set of all $R$-$312$-avoiding permutations, and exhibit several combinatorial objects associated with $\mathfrak{S}_{n}^{J}$ that are enumerated by these numbers~\cites{proctor17parabolic1,proctor17parabolic2,proctor18convexity}.

  It is straightforward to define $R$-$231$-avoiding permutations in the sense of R.~Proctor and M.~Willis, but this definition is more restrictive than our~\Cref{def:parabolic_231}.  For example, the permutation $3|24|1\in\mathfrak{S}_{4}^{\{s_{2}\}}$ is $\{1,3\}$-$231$-containing, since the '3', the '4', and the '1' form a parabolic $231$-pattern.  However, the '3' and the '1' do not form a descent, and this permutation turns out to be $(\{s_{2}\},231)$-avoiding.
\end{remark}

\subsection{Compressed Inversion Sets}
We now generalize A.~Bj{\"o}rner and M.~Wachs' definition of compressed inversion sets to parabolic quotients.  Define the \defn{descent set} of $w$ by
\begin{displaymath}
  \des(w) := \bigl\{(i,j)\in\inv(w)\mid w_{i}=w_{j}+1\bigr\}.
\end{displaymath}

\begin{definition}\label{def:parabolic_compressed}
    An inversion set $\inv(w)$ for a permutation $w \in \mathfrak{S}_n^J$ is \defn{$J$-compressed} if whenever there are three indices $i<j<k$, each in different $J$-regions and such that $(i,k)\in\des(w)$, it follows that $(i,j)\in\inv(w)$.
\end{definition}

\begin{lemma}\label{lem:j_compressed_avoiding}
  A permutation $w\in\mathfrak{S}_{n}^J$ is ($J$,$231$)-avoiding if and only if $\inv(w)$ is $J$-compressed.
\end{lemma}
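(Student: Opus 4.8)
The plan is to prove the contrapositive in both directions at once, by showing that $w$ contains a $(J,231)$-pattern precisely when $\inv(w)$ is \emph{not} $J$-compressed. Once the definition of $\des(w)$ is unfolded, the two conditions turn out to be verbatim restatements of one another, so the argument is a short definition chase rather than a genuine computation.

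For the forward implication I would start from a $(J,231)$-pattern: indices $i<j<k$ in three distinct $J$-regions with $w_k<w_i<w_j$ and $w_i=w_k+1$. From $i<k$ and $w_i>w_k$ one gets $(i,k)\in\inv(w)$, and adjoining $w_i=w_k+1$ gives $(i,k)\in\des(w)$; meanwhile $w_i<w_j$ forces $(i,j)\notin\inv(w)$. Hence the same triple $i<j<k$ witnesses the failure of \Cref{def:parabolic_compressed}. For the converse I would start from a failure of $J$-compression: indices $i<j<k$ in distinct $J$-regions with $(i,k)\in\des(w)$ but $(i,j)\notin\inv(w)$. The first condition unpacks to $w_i>w_k$ and $w_i=w_k+1$; since $w$ is a bijection and $i<j$, the second condition means $w_i<w_j$. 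Therefore $w_k<w_i<w_j$ and $w_i=w_k+1$, i.e. $i<j<k$ is a $(J,231)$-pattern.

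There is no serious obstacle here; the one point worth flagging is why the proof is this easy. Both \Cref{def:parabolic_231} and \Cref{def:parabolic_compressed} build in the descent requirement $w_i=w_k+1$, and that is exactly what collapses the dichotomy ``$w_i>w_j$ or $w_i<w_j$'' into ``either $(i,j)$ is an inversion, or we already have the pattern''. Dropping this requirement would make the naive compression condition (force $(i,j)\in\inv(w)$ from $(i,k)\in\inv(w)$) strictly stronger than $(J,231)$-avoidance, as illustrated by the examples discussed earlier in this section. Note also that no appeal to \Cref{lem:increasing_regions} is needed, since the ``distinct $J$-regions'' hypothesis is carried along identically on both sides of the equivalence.
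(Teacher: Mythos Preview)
Your proof is correct and essentially identical to the paper's: both arguments are a direct definition chase showing that a $(J,231)$-pattern at indices $i<j<k$ is exactly the same data as a witness to the failure of $J$-compression at those indices. The only cosmetic difference is that the paper phrases the second direction directly (assume $(J,231)$-avoiding and verify the compression condition) rather than via contrapositive, but the content is the same.
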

\begin{proof}
  Suppose first that $w\in\mathfrak{S}_{n}^{J}$ is not ($J$,$231$)-avoiding.  By definition there exist indices $i<j<k$ each in different $J$-regions such that $w_{k}<w_{i}<w_{j}$ as well as $w_{i}=w_{k}+1$.  This means that $(i,k)\in\des(w)$ but $(i,j)\notin\inv(w)$, which implies that $\inv(w)$ is not $J$-compressed.

  On the other hand, suppose that $w$ is ($J$,$231$)-avoiding, and choose three indices $i<j<k$, each in different $J$-regions and such that $(i,k)\in\des(w)$.  Since $w$ does not contain a ($J$,$231$)-pattern we must have $w_{j}<w_{i}$, which implies $(i,j)\in\inv(w)$.  Hence $\inv(w)$ is $J$-compressed.
\end{proof}

\subsection{Tamari Lattices for Parabolic Quotients}
  \label{sec:tamari}

Write $\Weak\bigl(\mathfrak{S}_n^J(231)\bigr)$ for the restriction of the weak order on the parabolic quotient $\mathfrak{S}_n^J$ to the $(J,231)$-avoiding permutations.  We now prove the first part of~\Cref{thm:parabolic_tamari_lattice}---that $\Weak\bigl(\mathfrak{S}_n^J(231)\bigr)$ is a lattice.  The proof follows from the next lemma, which is modeled after \cite{reading06cambrian}*{Lemma~5.6} for the case $J=\emptyset$.

\begin{lemma}\label{lem:down_projection}
  For every $w\in\mathfrak{S}_{n}^{J}$, there is a unique $w'\in\mathfrak{S}_{n}^{J}(231)$ such that $\inv(w')$ is the maximal set under containment among all $J$-compressed inversion sets $\inv(u)\subseteq\inv(w)$.
\end{lemma}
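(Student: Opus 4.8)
I would deduce the lemma from the fact that $\Weak(\mathfrak{S}_n^J)$ is a finite lattice, together with the claim that $\mathfrak{S}_n^J(231)$ is closed under the join operation $\vee$ of this lattice. By \Cref{lem:j_compressed_avoiding}, the members of $\mathfrak{S}_n^J(231)$ are exactly the $u \in \mathfrak{S}_n^J$ whose inversion set is $J$-compressed, so the set $P_w := \{u \in \mathfrak{S}_n^J(231) : \inv(u) \subseteq \inv(w)\}$ is precisely $\{u \in \mathfrak{S}_n^J(231) : u \leq_S w\}$. It is nonempty (it contains $\id$) and finite, so if $\mathfrak{S}_n^J(231)$ is join-closed then $P_w$ is too (any common upper bound of $u_1,u_2$ in $\Weak(\mathfrak{S}_n^J)$ dominates $u_1 \vee u_2$, so $u_1,u_2 \leq_S w$ forces $u_1 \vee u_2 \leq_S w$), and then $w' := \bigvee P_w$ lies in $P_w$ and dominates every element of it — which is exactly the existence and uniqueness we want. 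Thus everything reduces to showing: $u_1,u_2 \in \mathfrak{S}_n^J(231)$ implies $u_1 \vee u_2 \in \mathfrak{S}_n^J(231)$.

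First I would identify the join. Because $\inv(\wo^J)$ consists of exactly the pairs $(a,b)$ with $a<b$ in different $J$-regions, $\Weak(\mathfrak{S}_n^J)$ is the interval $[\id,\wo^J]$ of $\Weak(\mathfrak{S}_n)$ and hence a sublattice, so $\vee$ agrees with the join in $\Weak(\mathfrak{S}_n)$. Since inversion sets are closed under the transitivity rule ``$(i,m),(m,k)\in T$ and $i<m<k$ imply $(i,k)\in T$'' (because $w_i>w_m>w_k$), a short induction — on the stage at which a pair enters, splitting on whether a prescribed intermediate index $j$ lies below, at, or above the witnessing index $m$ — shows that the closure of $\inv(u_1)\cup\inv(u_2)$ under this rule is again a (biconvex) inversion set; being the smallest such set containing the union, it equals $\inv(u_1\vee u_2)$. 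In particular $\inv(u_1\vee u_2)\subseteq\inv(\wo^J)$, so $u_1\vee u_2\in\mathfrak{S}_n^J$, and only the $J$-compressedness of $\inv(u_1\vee u_2)$ remains to be checked.

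Set $v := u_1\vee u_2$ and suppose $(i,k)\in\des(v)$ with $i<j<k$ in three distinct $J$-regions; the goal is $(i,j)\in\inv(v)$, and I would argue by contradiction. Since $v_i=v_k+1$, the pair $(i,k)$ cannot have entered $\inv(v)$ through the transitivity rule — that would require $m$ with $v_i>v_m>v_k$ — so $(i,k)\in\inv(u_1)\cup\inv(u_2)$, say $(i,k)\in\inv(u_1)$. If $(i,j)\notin\inv(v)$ then $(i,j)\notin\inv(u_1)$, so $(u_1)_i<(u_1)_j$, and the elementary fact recorded in \Cref{rem:foreshadowing} applied to $(i,k)\in\inv(u_1)$ forces $(j,k)\in\inv(u_1)$; hence $(u_1)_k<(u_1)_i<(u_1)_j$ with $i,j,k$ in distinct $J$-regions. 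As $u_1$ is $(J,231)$-avoiding this is not a $(J,231)$-pattern, so $(u_1)_i\geq(u_1)_k+2$ and there is at least one intermediate value. I would then locate the positions in $u_1$ of the values $(u_1)_k+1,\dots,(u_1)_i-1$ and, for a suitable such position $p$ together with one of $i$, $j$, $k$, produce a pair in $\des(u_1)$ to which the $J$-compressedness of $\inv(u_1)$ (as in \Cref{def:parabolic_compressed}) applies, forcing an inversion of $u_1$ that contradicts one of the inequalities already established.

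The hard part is this last step. Any two of the three indices one wants to use must lie in different $J$-regions (otherwise the value inequalities in hand are violated, using that the entries of $u_1$ increase within each $J$-region by \Cref{lem:increasing_regions}), so the only genuine difficulty is that the relevant $J$-regions might be consecutive, with no third region strictly between them to host the middle index demanded by the definition of $J$-compressedness; I expect this to be handled by exchanging which of $i$, $j$, $k$ serves as the middle index, or by replacing the chosen intermediate value with one closer to $(u_1)_k$ or $(u_1)_i$ and iterating. As a fallback I would instead argue directly on $\inv(w)$: repeatedly deleting a ``bad'' descent pair $(i,k)$ — one with $i<j<k$ in distinct $J$-regions and $(i,j)\notin\inv(w)$ — keeps the set the inversion set of an element of $\mathfrak{S}_n^J$ (deleting such a pair removes exactly that one inversion and creates no descent inside a $J$-region), so one is left to show this rewriting terminates and is confluent, its unique normal form being $\inv(w')$.
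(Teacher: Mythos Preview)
Your overall strategy --- reduce the lemma to the claim that $\mathfrak{S}_n^J(231)$ is closed under joins in $\Weak(\mathfrak{S}_n^J)$ --- is sound, and the claim is in fact true (once the paper establishes that $\Theta$ is a lattice congruence in \Cref{prop:congruence}, join-closedness of the bottom elements follows from the general fact that if $\pi_\downarrow$ is order-preserving and idempotent with $\pi_\downarrow(x)\le x$, then $x=\pi_\downarrow(x)$ and $y=\pi_\downarrow(y)$ force $x\vee y\le\pi_\downarrow(x\vee y)\le x\vee y$).  The reduction you give from join-closedness to the lemma is correct, and your identification of $\inv(u_1\vee u_2)$ as the transitive closure of $\inv(u_1)\cup\inv(u_2)$ is standard.

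The gap is exactly where you flag it.  After reaching $(u_1)_k<(u_1)_i<(u_1)_j$ with $(u_1)_i\ge(u_1)_k+2$, one must locate the positions $p$ and $q$ with $(u_1)_p=(u_1)_k+1$ and $(u_1)_q=(u_1)_i-1$.  Three of the four location cases are quick: if either $p$ or $q$ lies strictly between $i$ and $k$, then $(i,p),(p,k)\in\inv(u_1)\subseteq\inv(v)$ contradicts $v_i=v_k+1$; if $p<i$, then $(p,k)\in\des(u_1)$ and $J$-compressedness with middle index $j$ gives $(u_1)_p>(u_1)_j$, absurd; if $q>k$, then $(i,q)\in\des(u_1)$ and $J$-compressedness with middle index $j$ gives $(i,j)\in\inv(u_1)\subseteq\inv(v)$, contradicting the hypothesis.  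But the remaining case $q<i<k<p$ is genuinely hard: neither $(i,k)$, nor $(q,k)$, nor $(i,p)$ is a descent of $u_1$, so you cannot apply $J$-compressedness directly, and one is forced into an inner induction on the value gap $(u_1)_q-(u_1)_p$, repeatedly relocating intermediate values.  This is exactly the content of Case~(v) in the paper's proof, which handles the analogous configuration $e<i<k<d$ by passing to suitably chosen $e',d'$ and inducting on $v_{e'}-v_{d'}$.  Your sketch (``exchanging which of $i,j,k$ serves as the middle index, or replacing the chosen intermediate value\ldots\ and iterating'') gestures at this but does not carry it out, and the worry you raise about consecutive $J$-regions is not actually the obstruction --- the obstruction is the absence of a descent pair to which \Cref{def:parabolic_compressed} applies.

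Your fallback (delete bad descents, show confluence) is essentially the paper's approach: the paper inducts on $\lvert\inv(w)\rvert$, removes one offending descent $(i,k)$, and shows via the five-case analysis that every $J$-compressed $\inv(v)\subseteq\inv(w)$ already avoids $(i,k)$.  So either route leads to the same hard case, and neither is completed in your proposal.
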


\begin{proof}
  We proceed by induction on the cardinality of $\inv(w)$. If $\inv(w)=\emptyset$, then $\inv(w)$ is $J$-compressed and the claim holds trivially. Suppose that $\bigl\lvert\inv(w)\bigr\rvert=r$, and that the claim is true for all $x\in\mathfrak{S}_{n}^{J}$ with $\bigl\lvert\inv(x)\bigr\rvert<r$.

  If $\inv(w)$ is already $J$-compressed, then we set $w'=w$ and we are done. Otherwise,~\Cref{lem:j_compressed_avoiding} implies that $w$ contains an instance of a ($J$,$231$)-pattern, which means that there are indices $i<j<k$ that all lie in different $J$-regions such that $w_{k}<w_{i}<w_{j}$ and $w_{i}=w_{k}+1$. Consider the lower cover $u$ of $w$ in which $w_{i}$ and $w_{k}$ are exchanged.  In particular, we have $\inv(w)=\inv(u)\cup\bigl\{(i,k)\bigr\}$.  By the induction hypothesis, there exists some $u'\in\mathfrak{S}_{n}^{J}$ such that $\inv(u')$ is the unique maximal $J$-compressed inversion set that is contained in $\inv(u)$. We claim that $w'=u'$. 

  In order to prove this claim, we choose some element $v\in\mathfrak{S}_{n}^{J}$ such that $\inv(v)$ is $J$-compressed and $\inv(v)\subseteq\inv(w)$. By construction, we have $(i,j)\notin\inv(w)$, and hence $(i,j)\notin\inv(v)$. Since $\inv(v)$ is $J$-compressed it follows by definition that $v_{i}\neq v_{k}+1$. We want to show that $\inv(v)\subseteq\inv(u)$, which amounts to showing that $(i,k)\notin\inv(v)$ because $\inv(w)\setminus\inv(u)=\bigl\{(i,k)\bigr\}$.

  We assume the opposite, and in view of the argument above it follows that $v_{i}>v_{k}+1$. Let $d$ be the index such that $v_{d}=v_{k}+1$, and let $e$ be the index such that $v_{i}=v_{e}+1$. Since $w_{i}=w_{k}+1$, we observe the following:
  \begin{align}
    \tag{D}\text{either} & \quad w_{d}<w_{k}\quad\text{or}\quad w_{i}<w_{d},\label{eq:prop_d}\quad\text{and}\\
    \tag{E}\text{either} & \quad w_{e}<w_{k}\quad\text{or}\quad w_{i}<w_{e}.\label{eq:prop_e}
  \end{align}
  We have the following relations:
  \begin{displaymath}
    v_{j}>v_{i}>v_{e}\geq v_{d}>v_{k}.
  \end{displaymath}
  (If $v_{j}<v_{i}$, then $(i,j)\in\inv(v)\subseteq\inv(w)$, which is a contradiction.) We now distinguish five cases.

  (i) Let $d<i<k$.  Then $(d,k)\in\des(v)\subseteq\inv(w)$.  It follows that $w_{d}>w_{k}$, and \eqref{eq:prop_d} implies $w_{d}>w_{i}$.  \Cref{lem:increasing_regions} implies that $d$ and $i$ lie in different $J$-regions.  Since $\inv(v)$ is $J$-compressed, we conclude $(d,i)\in\inv(v)$.  Hence $v_{i}<v_{d}=v_{k}+1<v_{i}$, which is a contradiction.

  (ii) Let $i<d<k$.  Then $(i,d),(d,k)\in\inv(v)\subseteq\inv(w)$.  It follows that $w_{i}>w_{d}>w_{k}$, which contradicts $(i,k)\in\des(w)$.

  (iii) Let $i<e<k$.  Then $(i,e),(e,k)\in\inv(v)\subseteq\inv(w)$.  It follows that $w_{i}>w_{e}>w_{k}$, which contradicts $(i,k)\in\des(w)$.

  (iv) Let $i<k<e$.  Then $(i,e)\in\des(v)\subseteq\inv(w)$.  It follows that $w_{i}>w_{e}$, and \eqref{eq:prop_e} implies $w_{e}<w_{k}$.  \Cref{lem:increasing_regions} implies that $k$ and $e$ lie in different $J$-regions.  Since $\inv(v)$ is $J$-compressed, we conclude $(k,e)\in\inv(v)$.  Hence $v_{i}=v_{e}+1<v_{k}+1<v_{i}$, which is a contradiction.

  (v) Let $e<i<k<d$, which in particular implies that $(e,d)\in\inv(v)\subseteq\inv(w)$.  Moreover, $(e,k),(i,d)\in\des(v)\subseteq\inv(w)$.  It follows that $w_{i}>w_{d}$ as well as $w_{e}>w_{k}$. Now \eqref{eq:prop_d} and \eqref{eq:prop_e} imply $w_{d}<w_{k}$ and $w_{e}>w_{i}$, respectively.  \Cref{lem:increasing_regions} implies that $e,i,k$ and $d$ all lie in different $J$-regions.

  Let $e'$ be the smallest element in the $J$-region of $e$ such that $v_{e'}>v_{d}$, and let $d'$ be the largest element in the $J$-region of $d$ such that $v_{d'}<v_{e'}$. We record that $e'\leq e<i<j<k<d\leq d'$, and we proceed by induction on $v_{e'}-v_{d'}$. If $v_{e'}=v_{d'}+1$, then $(e',i),(e',j)\in\inv(v)$, since $\inv(v)$ is $J$-compressed. \Cref{lem:increasing_regions} implies that $v_{e}\geq v_{e'}>v_{j}>v_{i}=v_{e}+1$, which is a contradiction. If $v_{e'}>v_{d'}+1$, then there must be some index $f$ with $v_{e'}>v_{f}>v_{d'}+1$.  By construction we have $v_{i}=v_{e}+1>v_{e}\geq v_{e'}>v_{f}$.

  If $f<i$ and they do not lie in the same $J$-region, then we can consider the triple $(f,i,d')$, and obtain a contradiction by induction, since $v_{f}-v_{d'}<v_{e'}-v_{d'}$.  If $f>i$ and they do not lie in the same $J$-region, then we can consider the triple $(e',i,f)$, and obtain a contradiction by induction, since $v_{e'}-v_{f}<v_{e'}-v_{d'}$.  If $f$ and $i$ lie in the same $J$-region, then we have $f<i$.  We can consider the triple $(f,j,d')$, and obtain a contradiction by induction, since $v_{f}-v_{d'}<v_{e'}-v_{d'}$.

  We have thus shown that $(i,k)\notin\inv(v)$, which implies $\inv(v)\subseteq\inv(u)$. By the induction assumption it follows that $\inv(v)\subseteq\inv(u')$, which proves $w'=u'$.
\end{proof}

Using~\Cref{lem:j_compressed_avoiding}, we may reformulate~\Cref{lem:down_projection}: for every $w\in\mathfrak{S}_n^J$, there exists a unique maximal ($J$,$231$)-avoiding permutation $w'$ with $w'\leq_{S}w$.  The following definition gives us notation to refer to $w'$.

\begin{definition}\label{def:down_projection}
  We define the projection 
  \begin{align*}
    \Pi_{\downarrow}^{J}\colon\mathfrak{S}_{n}^{J}\to\mathfrak{S}_{n}^{J}(231),\quad w\mapsto w',
  \end{align*}
  where $w'$ is the unique maximal ($J$,$231$)-avoiding permutation below $w$.
\end{definition}

\begin{proposition}\label{prop:j_231_avoiding_lattice}
    The poset $\Weak\bigl(\mathfrak{S}_{n}^{J}(231)\bigr)$ is a lattice.
\end{proposition}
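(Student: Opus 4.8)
The plan is to deduce the result directly from \Cref{lem:down_projection}: I will show that the projection $\Pi_{\downarrow}^{J}$ of \Cref{def:down_projection} is a kernel operator (equivalently, an interior operator) on the finite lattice $\Weak(\mathfrak{S}_{n}^{J})$, and then invoke the standard fact that the image of a kernel operator on a finite lattice is again a lattice under the induced order.

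First I would record three properties of $\Pi_{\downarrow}^{J}$, all immediate from the reformulation of \Cref{lem:down_projection} that $\Pi_{\downarrow}^{J}(w)$ is the greatest $(J,231)$-avoiding permutation weakly below $w$. It is \emph{deflationary}: $\Pi_{\downarrow}^{J}(w)\leq_{S}w$ by construction. It is \emph{idempotent}: every $(J,231)$-avoiding permutation $u$ is trivially the greatest $(J,231)$-avoiding permutation below itself, so $\Pi_{\downarrow}^{J}(u)=u$, hence $\Pi_{\downarrow}^{J}\circ\Pi_{\downarrow}^{J}=\Pi_{\downarrow}^{J}$. And it is \emph{order-preserving}: if $u\leq_{S}w$, then $\Pi_{\downarrow}^{J}(u)$ is a $(J,231)$-avoiding permutation with $\Pi_{\downarrow}^{J}(u)\leq_{S}u\leq_{S}w$, so $\Pi_{\downarrow}^{J}(u)\leq_{S}\Pi_{\downarrow}^{J}(w)$ by maximality. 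Thus $\Pi_{\downarrow}^{J}\colon\Weak(\mathfrak{S}_{n}^{J})\to\Weak(\mathfrak{S}_{n}^{J})$ is a kernel operator whose set of fixed points is exactly $\mathfrak{S}_{n}^{J}(231)$.

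Since $\Weak(\mathfrak{S}_{n}^{J})$ is a finite lattice (it is the interval $[\id,\wo^{J}]$ of $\Weak(\mathfrak{S}_{n})$, as recalled in \Cref{sec:parabolic_quotients}), the conclusion follows from the usual computation. For $u,v\in\mathfrak{S}_{n}^{J}(231)$, the join $u\vee v$ formed in $\Weak(\mathfrak{S}_{n}^{J})$ is again $(J,231)$-avoiding---since $u=\Pi_{\downarrow}^{J}(u)\leq_{S}\Pi_{\downarrow}^{J}(u\vee v)$ and likewise $v\leq_{S}\Pi_{\downarrow}^{J}(u\vee v)$, which forces $u\vee v\leq_{S}\Pi_{\downarrow}^{J}(u\vee v)\leq_{S}u\vee v$---and it is plainly the least upper bound of $u,v$ within $\mathfrak{S}_{n}^{J}(231)$, as any upper bound there is already an upper bound in $\Weak(\mathfrak{S}_{n}^{J})$. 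On the other hand $\Pi_{\downarrow}^{J}(u\wedge v)$ is the greatest lower bound of $u,v$ within $\mathfrak{S}_{n}^{J}(231)$: it is $(J,231)$-avoiding and $\leq_{S}u,v$, and any $(J,231)$-avoiding $z\leq_{S}u,v$ satisfies $z\leq_{S}u\wedge v$, hence $z\leq_{S}\Pi_{\downarrow}^{J}(u\wedge v)$ by \Cref{lem:down_projection}. Therefore $\Weak(\mathfrak{S}_{n}^{J}(231))$ is a lattice.

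I do not expect a genuine obstacle here---all the difficulty was already absorbed into \Cref{lem:down_projection}---but two points deserve care. First, $\mathfrak{S}_{n}^{J}(231)$ is in general \emph{not} closed under meets in $\Weak(\mathfrak{S}_{n}^{J})$, so the meet must genuinely be post-composed with $\Pi_{\downarrow}^{J}$; this is exactly why $\Tamari_{n}^{J}$ is typically not a sublattice, as asserted in \Cref{thm:parabolic_tamari_lattice}. Second, the maximum of $\Weak(\mathfrak{S}_{n}^{J}(231))$ is $\Pi_{\downarrow}^{J}(\wo^{J})$, which need not equal $\wo^{J}$. If one prefers to avoid citing generalities about kernel operators, the meet computation above already exhibits $\Weak(\mathfrak{S}_{n}^{J}(231))$ as a meet-semilattice, and it has a greatest element $\Pi_{\downarrow}^{J}(\wo^{J})$ (every $(J,231)$-avoiding permutation lies weakly below $\wo^{J}$, hence weakly below $\Pi_{\downarrow}^{J}(\wo^{J})$); a finite meet-semilattice with a maximum is automatically a lattice, which finishes the proof.
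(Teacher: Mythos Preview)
Your argument is correct and is essentially the paper's own proof, just packaged in the language of kernel operators: both apply \Cref{lem:down_projection} to obtain meets as $\Pi_{\downarrow}^{J}(u\wedge v)$, then finish via ``finite meet-semilattice with top element is a lattice.'' One small inaccuracy in your closing remarks: the paper observes that $\Pi_{\downarrow}^{J}(\wo^{J})=\wo^{J}$ always, since $\inv(\wo^{J})$ consists of \emph{all} pairs $(i,j)$ with $i,j$ in different $J$-regions and is therefore trivially $J$-compressed; so the top of $\Tamari_{n}^{J}$ is $\wo^{J}$ itself, not merely $\Pi_{\downarrow}^{J}(\wo^{J})$.
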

\begin{proof}
  Let $w_{1},w_{2}\in\mathfrak{S}_{n}^{J}(231)$.
  \Cref{lem:down_projection} implies that there exists a unique maximal element $u'\in\mathfrak{S}_{n}^{J}(231)$ with $u'\leq_{S}w_{1},w_{2}$, which necessarily must be the meet of $w_{1}$ and $w_{2}$ in $\Weak\bigl(\mathfrak{S}_{n}^{J}(231)\bigr)$.  Since $\inv(\wo^{J})$ contains all possible inversions, we have $\Pi_{\downarrow}^{J}(\wo^J)=\wo^J$. We have thus established that $\Weak\bigl(\mathfrak{S}_{n}^{J}(231)\bigr)$ is a finite meet-semilattice with greatest element $\wo^{J}$.  It is a classical lattice-theoretic result (see for instance \cite{gratzer11lattice}*{Exercise~1.27}) that this suffices to show that $\Weak\bigl(\mathfrak{S}_{n}^{J}(231)\bigr)$ is a lattice.
\end{proof}

\Cref{lem:j_231_well_defined} implies that the set $\mathfrak{S}_{n}^{\emptyset}(231)$ coincides with the set of all classical $231$-avoiding permutations of $\mathfrak{S}_{n}$, and \Cref{thm:231_avoiding_tamari} states that $\Weak\bigl(\mathfrak{S}_{n}^{\emptyset}(231)\bigr)$ is isomorphic to the Tamari lattice $\Tamari_{n}$. In view of \Cref{prop:j_231_avoiding_lattice}, we denote the poset $\Weak\bigl(\mathfrak{S}_{n}^{J}(231)\bigr)$ by $\Tamari_{n}^{J}$, and call it the \defn{parabolic Tamari lattice}.

\begin{remark}\label{rem:different_from_interval}
  Consider the parabolic subgroup $(\mathfrak{S}_{n})_{J}$, and let $(\wo)_{J}$ denote the longest permutation in this subgroup. The poset of all $231$-avoiding permutations in the interval $[\id,(\wo)_{J}]$ is just an interval in the Tamari lattice $\Tamari_{n}$.

  If we consider instead parabolic \emph{quotients}, then even though the elements in $\mathfrak{S}_{n}^{J}$ form the interval $[\id,\wo^{J}]$, the lattice $\Tamari_{n}^{J}$ is not an interval in $\Tamari_{n}$.  For example, $\Tamari_4^{\{s_{2}\}}$ is depicted in \Cref{fig:j_2_tamari}. Observe that the maximal element $\wo^{\{s_{2}\}}=4|23|1$ is \emph{not} $231$-avoiding.
\end{remark}

\begin{remark}\label{rem:not_sublattice}
  The lattice $\Tamari_{n}^{J}$ is not generally a sublattice of $[\id,\wo^{J}]$. Consider again the case when $n=4$ and $J=\{s_{2}\}$.  Then the meet of $w_{1}=4|13|2$ and $w_{2}=3|24|1$ in weak order is $3|14|2$, while their meet in $\Tamari_4^{\{s_{2}\}}$ is $2|14|3$.

  In certain special cases---for example, when $J = \emptyset$ or for certain $J=S\setminus\{s\}$---we do obtain sublattices.
\end{remark}

\subsection{Parabolic Tamari Lattices are Lattice Quotients}
  \label{sec:parabolic_tamari_quotient}
In this section, we prove that $\Tamari_n^J$ is a lattice quotient of $\Weak(\mathfrak{S}_n^J)$, completing the proof of \Cref{thm:parabolic_tamari_lattice}. Recall for instance from \cite{reading06cambrian}*{Section~3} that an equivalence relation $\Theta$ on a lattice is a \defn{lattice congruence} if and only if all equivalence classes are intervals, and the projections that map an element to the least or greatest element in its equivalence class, respectively, are both order-preserving.

Using the map $\Pi_{\downarrow}^{J}$ from~\Cref{def:down_projection}, we define a binary relation $\Theta$ on $\mathfrak{S}_{n}^{J}$ by
\begin{equation}\label{eq:tamari_congruence}
  (w,w')\in\Theta\quad\text{if and only if}\quad\Pi_{\downarrow}^{J}(w)=\Pi_{\downarrow}^{J}(w').
\end{equation}
It is immediate that $\Theta$ is an equivalence relation and $\Pi_{\downarrow}^{J}$ maps $w\in\mathfrak{S}_{n}^{J}$ to the least element in its equivalence class.

\begin{lemma}\label{lem:convex_fibers}
  The fibers of $\Pi_{\downarrow}^{J}$ are order-convex, \ie if $u\leq_{S}x\leq_{S}v$ and $\Pi_{\downarrow}^{J}(u)=\Pi_{\downarrow}^{J}(v)$, then $\Pi_{\downarrow}^{J}(u)=\Pi_{\downarrow}^{J}(x)$.
\end{lemma}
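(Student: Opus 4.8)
The plan is to show that $\Pi_{\downarrow}^{J}(u)=\Pi_{\downarrow}^{J}(v)$ forces $\Pi_{\downarrow}^{J}(x)$ to equal the same permutation whenever $u\leq_{S}x\leq_{S}v$. Since $\Pi_{\downarrow}^{J}$ is monotone in the sense supplied by~\Cref{lem:down_projection} — namely, $\inv\bigl(\Pi_{\downarrow}^{J}(w)\bigr)$ is the largest $J$-compressed inversion set contained in $\inv(w)$ — the inclusion $\inv(u)\subseteq\inv(x)\subseteq\inv(v)$ immediately gives $\inv\bigl(\Pi_{\downarrow}^{J}(u)\bigr)\subseteq\inv\bigl(\Pi_{\downarrow}^{J}(x)\bigr)\subseteq\inv\bigl(\Pi_{\downarrow}^{J}(v)\bigr)$, i.e. $\Pi_{\downarrow}^{J}(u)\leq_{S}\Pi_{\downarrow}^{J}(x)\leq_{S}\Pi_{\downarrow}^{J}(v)$. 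Under the hypothesis $\Pi_{\downarrow}^{J}(u)=\Pi_{\downarrow}^{J}(v)$, the two outer terms coincide, and the squeeze forces $\Pi_{\downarrow}^{J}(x)=\Pi_{\downarrow}^{J}(u)$.

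Concretely, I would first record the monotonicity statement as an explicit observation: if $\inv(a)\subseteq\inv(b)$ for $a,b\in\mathfrak{S}_{n}^{J}$, then $\inv\bigl(\Pi_{\downarrow}^{J}(a)\bigr)$ is $J$-compressed and contained in $\inv(a)\subseteq\inv(b)$, so by the maximality clause of~\Cref{lem:down_projection} (the defining property of $\Pi_{\downarrow}^{J}(b)$) we get $\inv\bigl(\Pi_{\downarrow}^{J}(a)\bigr)\subseteq\inv\bigl(\Pi_{\downarrow}^{J}(b)\bigr)$, hence $\Pi_{\downarrow}^{J}(a)\leq_{S}\Pi_{\downarrow}^{J}(b)$. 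Apply this twice, to the pair $(u,x)$ and to the pair $(x,v)$, to obtain the chain of inequalities above.

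Finally I would close the argument: from $\Pi_{\downarrow}^{J}(u)\leq_{S}\Pi_{\downarrow}^{J}(x)\leq_{S}\Pi_{\downarrow}^{J}(v)$ and the assumption $\Pi_{\downarrow}^{J}(u)=\Pi_{\downarrow}^{J}(v)$, antisymmetry of $\leq_{S}$ yields $\Pi_{\downarrow}^{J}(x)=\Pi_{\downarrow}^{J}(u)$, which is exactly order-convexity of the fiber. I do not expect a genuine obstacle here: the only real content is the monotonicity of $\Pi_{\downarrow}^{J}$, and that is an essentially formal consequence of the characterization of $\Pi_{\downarrow}^{J}$ already established in~\Cref{lem:down_projection}. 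The mild subtlety worth stating carefully is that the "maximal $J$-compressed inversion set below $\inv(w)$" description makes monotonicity transparent, whereas working directly from the pattern-avoidance description of $\mathfrak{S}_{n}^{J}(231)$ would be more awkward; so the write-up should phrase everything in terms of inversion sets.
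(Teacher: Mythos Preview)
Your proposal is correct and takes essentially the same approach as the paper: both arguments use the maximality clause of \Cref{lem:down_projection} to squeeze $\Pi_{\downarrow}^{J}(x)$ between $\Pi_{\downarrow}^{J}(u)$ and $\Pi_{\downarrow}^{J}(v)$. The only cosmetic difference is that you first isolate the monotonicity of $\Pi_{\downarrow}^{J}$ as a separate observation (which the paper records later, as \Cref{lem:projections_order_preserving}), whereas the paper applies the maximality property directly in the two relevant instances.
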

\begin{proof}
  Let $u'=\Pi_{\downarrow}^{J}(u)=\Pi_{\downarrow}^{J}(v)$ and $x'=\Pi_{\downarrow}^{J}(x)$.  Since $x'\leq_{S}x\leq_{S}v$, \Cref{lem:down_projection} implies $x'\leq_{S}u'$.  Moreover, since $u'\leq_{S}u\leq_{S}x$, \Cref{lem:down_projection} implies $u'\leq_{S}x'$.
\end{proof}

We claim that every equivalence class of $\Theta$ has a greatest element.  In view of \Cref{lem:convex_fibers} this would imply that the equivalence classes of $\Theta$ are intervals in $\Weak(\mathfrak{S}_n^J)$.  In order to describe these greatest elements, we say that a permutation $w\in\mathfrak{S}_{n}^{J}$ has a \defn{($J$,$132$)-pattern} if there are indices $i<j<k$ each in different $J$-regions such that $w_{i}<w_{k}<w_{j}$ and $w_{k}=w_{i}+1$.  We say $w\in\mathfrak{S}_{n}^{J}$ is \defn{($J$,$132$)-avoiding} if it does not have a ($J$,$132$)-pattern.  The proof of the following result is almost identical to the proof of \Cref{lem:down_projection}.

\begin{lemma}\label{lem:up_projection}
  For any $w\in\mathfrak{S}_{n}^{J}$, there is a unique minimal ($J$,$132$)-avoiding permutation $w'$ with $w\leq_{S}w'$.
\end{lemma}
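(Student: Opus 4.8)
The plan is to dualize the proof of \Cref{lem:down_projection} almost verbatim, exchanging the roles of inversions and non-inversions, of ``$231$'' and ``$132$'', of ``maximal'' and ``minimal'', and of $\leq_S$ and $\geq_S$. First I would record the inversion-set reformulation dual to \Cref{lem:j_compressed_avoiding}: say that $\inv(w)$ is \emph{co-$J$-compressed} if whenever $i<j<k$ lie in three different $J$-regions with $w_k=w_i+1$, then $(i,j)\in\inv(w)$. Since $w_k=w_i+1$ together with $w_i<w_j$ forces $w_k<w_j$, a $(J,132)$-pattern of $w$ is precisely a triple $i<j<k$ in different $J$-regions with $w_k=w_i+1$ and $(i,j)\notin\inv(w)$; hence, by the same two-line argument as in the proof of \Cref{lem:j_compressed_avoiding}, $w\in\mathfrak{S}_n^J$ is $(J,132)$-avoiding if and only if $\inv(w)$ is co-$J$-compressed. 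Thus \Cref{lem:up_projection} is equivalent to the assertion that among all co-$J$-compressed inversion sets containing $\inv(w)$ there is a unique minimal one.

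I would prove this by induction on $\bigl\lvert\inv(\wo^J)\bigr\rvert-\bigl\lvert\inv(w)\bigr\rvert$, the number of pairs that are inversions of $\wo^J$ but not of $w$ --- equivalently the number of non-inversions $(i,j)$ of $w$ with $i$ and $j$ in different $J$-regions. In the base case this number is $0$, so $w=\wo^J$; recalling that $\inv(\wo^J)$ consists of exactly the pairs lying in different $J$-regions, $\wo^J$ admits no $(J,132)$-pattern and we are done. For the inductive step, if $\inv(w)$ is already co-$J$-compressed set $w'=w$; otherwise fix a $(J,132)$-pattern, i.e.\ indices $i<j<k$ in distinct $J$-regions with $w_i<w_k<w_j$ and $w_k=w_i+1$, and let $v$ be the upper cover of $w$ obtained by exchanging the two consecutive values $w_i$ and $w_k$, so that $\inv(v)=\inv(w)\cup\bigl\{(i,k)\bigr\}$ and $(i,k)\in\des(v)$. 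As in the paragraph following \Cref{lem:increasing_regions}, the fact that $w_i$ and $w_k$ are consecutive values in different $J$-regions shows $v\in\mathfrak{S}_n^J$, and $v$ has one fewer cross-region non-inversion than $w$. By induction there is a unique minimal $(J,132)$-avoiding $v'$ with $v\leq_S v'$; I claim $w'=v'$.

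To prove the claim --- and with it the lemma, since this identifies $v'$ with the minimum of the set of $(J,132)$-avoiding elements above $w$, exactly as in the proof of \Cref{lem:down_projection} --- it suffices to show that every $(J,132)$-avoiding $t\in\mathfrak{S}_n^J$ with $w\leq_S t$ already satisfies $v\leq_S t$, i.e.\ $(i,k)\in\inv(t)$. Suppose not, so $t_i<t_k$. From $\inv(w)\subseteq\inv(t)$ we get $t_j>t_k$, and co-$J$-compressedness of $t$ forces $t_k\neq t_i+1$, hence $t_k>t_i+1$. Exactly as in the original proof I would introduce the auxiliary indices $d$ with $t_d=t_i+1$ and $e$ with $t_e=t_k-1$, note the chain $t_j>t_k>t_e\geq t_d>t_i$ and the two dichotomies ``$w_d<w_i$ or $w_d>w_k$'' and ``$w_e<w_i$ or $w_e>w_k$'' coming from $w_k=w_i+1$, and then run through the same five cases according to the positions of $d$ and $e$ relative to $i,j,k$, each time using \Cref{lem:increasing_regions} together with co-$J$-compressedness of $t$ to exhibit a $(J,132)$-pattern of $t$ --- a contradiction. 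The final case again requires an inner induction, now on $t_e-t_d$.

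Everything except that last step transfers with no new ideas: the outer induction, the check $v\in\mathfrak{S}_n^J$, and the bookkeeping that promotes ``$v'$ is minimal above $v$'' to ``$v'$ is minimal above $w$''. The only genuine work is the dualized five-case analysis, and the point to watch is that the hypothesis is now $\inv(w)\subseteq\inv(t)$ rather than $\inv(v)\subseteq\inv(w)$, so information propagates upward: several containments used in \Cref{lem:down_projection} must be rederived in the opposite direction, and one must keep careful track of which pairs among $i,j,k,d,e$ are (non-)inversions of $w$, hence of $t$. I expect the analogue of case~(v), with its nested induction and the explicit construction of the witnessing $(J,132)$-pattern in $t$, to be the main obstacle.
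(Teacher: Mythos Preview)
Your proposal is correct and is exactly the approach the paper takes: the paper's entire proof of this lemma is the single sentence ``The proof of the following result is almost identical to the proof of \Cref{lem:down_projection},'' and you have carefully spelled out precisely that dualization. Your reformulation via co-$J$-compressedness, the upward induction on $\bigl\lvert\inv(\wo^J)\bigr\rvert-\bigl\lvert\inv(w)\bigr\rvert$, the construction of the upper cover $v$ by swapping the consecutive values $w_i$ and $w_k$, and the reduction to showing $(i,k)\in\inv(t)$ for every $(J,132)$-avoiding $t\geq_S w$ are all the correct dual moves; only the small reference ``the paragraph following \Cref{lem:increasing_regions}'' does not point to anything in the paper, but the verification that $v\in\mathfrak{S}_n^J$ is the routine check you indicate.
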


We therefore obtain a map $\Pi_{\uparrow}^{J}\colon\mathfrak{S}_{n}\to\mathfrak{S}_{n}^{J}(132)$ that maps $w$ to the unique minimal ($J$,$132$)-avoiding permutation $w'$ above $w$.

\begin{lemma}\label{lem:projections_order_preserving}
  The maps $\Pi_{\downarrow}^{J}$ and $\Pi_{\uparrow}^{J}$ are order-preserving.
\end{lemma}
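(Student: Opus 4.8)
The plan is to derive both statements immediately from the extremal characterizations already established, with no new case analysis. First I would make explicit the dual of \Cref{lem:down_projection} that the proof of \Cref{lem:up_projection} supplies: for every $w\in\mathfrak{S}_n^J$, the inversion set $\inv\bigl(\Pi_{\uparrow}^{J}(w)\bigr)$ is the \emph{minimal} one under containment among the inversion sets of $(J,132)$-avoiding permutations lying weakly above $w$. Equivalently, $\Pi_{\uparrow}^{J}(w)\leq_{S}z$ for every $(J,132)$-avoiding $z$ with $w\leq_{S}z$. This mirrors the reformulation recorded after \Cref{lem:down_projection}, namely that $\Pi_{\downarrow}^{J}(w)$ lies weakly above every $(J,231)$-avoiding permutation that is $\leq_{S}w$.

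Now fix $u\leq_{S}v$ in $\Weak(\mathfrak{S}_n^J)$. For $\Pi_{\downarrow}^{J}$: the permutation $\Pi_{\downarrow}^{J}(u)$ is $(J,231)$-avoiding and satisfies $\Pi_{\downarrow}^{J}(u)\leq_{S}u\leq_{S}v$, so it is one of the $(J,231)$-avoiding permutations below $v$; by the maximality asserted in \Cref{lem:down_projection} it is therefore $\leq_{S}\Pi_{\downarrow}^{J}(v)$. For $\Pi_{\uparrow}^{J}$: the permutation $\Pi_{\uparrow}^{J}(v)$ is $(J,132)$-avoiding and satisfies $u\leq_{S}v\leq_{S}\Pi_{\uparrow}^{J}(v)$, so it is one of the $(J,132)$-avoiding permutations above $u$; by the minimality from \Cref{lem:up_projection} it is therefore $\geq_{S}\Pi_{\uparrow}^{J}(u)$. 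This proves that both maps are order-preserving.

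There is essentially no obstacle here: all the real work lies in \Cref{lem:down_projection} and its mirror image \Cref{lem:up_projection}, and what remains is the purely formal observation that a map sending each element to the greatest ``nice'' element below it (respectively the least ``nice'' element above it) is automatically monotone. The only point requiring a line of care is confirming that \Cref{lem:up_projection} genuinely yields the containment-minimality statement, and not merely the existence of a minimal element — but this is exactly what the proof of \Cref{lem:down_projection} gives after transporting it by the evident symmetry that interchanges the patterns $231$ and $132$ and reverses the roles of ``below'' and ``above''.
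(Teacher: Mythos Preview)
Your proposal is correct and follows essentially the same approach as the paper: the paper proves the claim for $\Pi_{\downarrow}^{J}$ by noting that $\Pi_{\downarrow}^{J}(u)\leq_{S}u\leq_{S}v$ and invoking the maximality from \Cref{lem:down_projection}, then says the case of $\Pi_{\uparrow}^{J}$ is analogous. You simply write out both halves explicitly, which is fine.
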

\begin{proof}
  We only prove this property for $\Pi_{\downarrow}^{J}$, since the result for $\Pi_{\uparrow}^{J}$ follows analogously.

  Let $u,v\in\mathfrak{S}_{n}^{J}$ with $u\leq_{S}v$. \Cref{lem:down_projection} states that $\Pi_{\downarrow}^{J}(u)\leq_{S}u\leq_{S}v$.  Since $\Pi_{\downarrow}^{J}(v)$ is maximal among all elements below $v$ with $J$-compressed inversion sets, it follows that $\Pi_{\downarrow}^{J}(u)\leq_S\Pi_{\downarrow}^{J}(v)$.
\end{proof}

\begin{lemma}\label{lem:fiber_moves}
  Let $u,v\in\mathfrak{S}_{n}^{J}$ with $u\lessdot_{S}v$. The following are equivalent.
  \begin{enumerate}[(i)]
    \item There are indices $i<j<k$, each in different $J$-regions, such that $v_{k}<v_{i}<v_{j}$, $v_{i}=v_{k}+1$, and $\inv(v)\setminus\inv(u)=\bigl\{(i,k)\bigr\}$.
    \item $\Pi_{\downarrow}^{J}(u)=\Pi_{\downarrow}^{J}(v)$.
    \item $\Pi_{\uparrow}^{J}(u)=\Pi_{\uparrow}^{J}(v)$.
  \end{enumerate}
\end{lemma}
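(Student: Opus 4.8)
The plan is to reformulate condition (i), to read off (i) $\Rightarrow$ (ii) and (i) $\Rightarrow$ (iii) from the arguments proving \Cref{lem:down_projection} and \Cref{lem:up_projection}, and then to prove the converses in contrapositive form. Write $\inv(v)\setminus\inv(u)=\{(a,c)\}$; by the definition of a cover relation $a<c$ and $v_a=v_c+1$, so $(a,c)\in\des(v)$, and \Cref{lem:increasing_regions} forces $a$ and $c$ into different $J$-regions. Exchanging the entries $v_a,v_c$ at positions $a,c$ changes only the inversion $(a,c)$, so $u$ is exactly the permutation obtained from $v$ this way; in particular $u_a=v_c$ and $u_c=v_a=u_a+1$. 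Call a position $q$ a \emph{middle position} of $(a,c)$ if $a<q<c$ and $q$ lies in a $J$-region different from those of $a$ and $c$. With this language, condition (i) says precisely that some middle position $j$ of $(a,c)$ has $v_j>v_a$ (equivalently $(a,j)\notin\inv(v)$, equivalently $u_j>u_c$); so (i) holds if and only if $(a,j,c)$ is a $(J,231)$-pattern of $v$ with $v_a=v_c+1$, if and only if $(a,j,c)$ is a $(J,132)$-pattern of $u$ with $u_c=u_a+1$.

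If (i) holds, then $v$ carries the $(J,231)$-pattern $(a,j,c)$ with $v_a=v_c+1$ and $u$ is exactly the lower cover of $v$ obtained by exchanging $v_a$ and $v_c$; this is the configuration analysed inside the proof of \Cref{lem:down_projection} (with the element $w$ there equal to $v$), which establishes $\Pi_{\downarrow}^{J}(v)=\Pi_{\downarrow}^{J}(u)$, that is (ii). Dually, $u$ carries the $(J,132)$-pattern $(a,j,c)$ with $u_c=u_a+1$ and $v$ is the upper cover of $u$ obtained by exchanging $u_a$ and $u_c$, so the proof of \Cref{lem:up_projection} gives $\Pi_{\uparrow}^{J}(u)=\Pi_{\uparrow}^{J}(v)$, that is (iii).

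For the converse implications it suffices to prove that, when (i) fails, $(a,c)\in\inv\bigl(\Pi_{\downarrow}^{J}(v)\bigr)$ and $(a,c)\notin\inv\bigl(\Pi_{\uparrow}^{J}(u)\bigr)$. Indeed, $\Pi_{\downarrow}^{J}$ and $\Pi_{\uparrow}^{J}$ are order-preserving (\Cref{lem:projections_order_preserving}), so $\Pi_{\downarrow}^{J}(u)\leq_{S}u$ and $\Pi_{\uparrow}^{J}(v)\geq_{S}v$; since $(a,c)\notin\inv(u)$ while $(a,c)\in\inv(v)$, the first relation gives $\Pi_{\downarrow}^{J}(v)\nleq_{S}u$, hence $\Pi_{\downarrow}^{J}(u)\neq\Pi_{\downarrow}^{J}(v)$ and (ii) fails, and the second gives $(a,c)\in\inv\bigl(\Pi_{\uparrow}^{J}(v)\bigr)$, hence $\Pi_{\uparrow}^{J}(u)\neq\Pi_{\uparrow}^{J}(v)$ and (iii) fails.

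So assume (i) fails. For $z\in\mathfrak{S}_{n}^{J}$ consider the property $\mathcal{P}(z)$: \emph{$(a,c)\in\inv(z)$ and $z_q<z_a$ for every middle position $q$ of $(a,c)$}. The failure of (i) is exactly $\mathcal{P}(v)$, and $\mathcal{P}(z)$ implies that $(a,c)$ is not the outer pair of any $(J,231)$-pattern of $z$. Now compute $\Pi_{\downarrow}^{J}(v)$ by the reduction from the proof of \Cref{lem:down_projection}: while the current permutation is not $(J,231)$-avoiding, replace it by the lower cover that exchanges the outer entries of some $(J,231)$-pattern; by \Cref{lem:down_projection} this reaches $\Pi_{\downarrow}^{J}(v)$ whatever patterns are chosen, and it terminates since $\bigl\lvert\inv(\cdot)\bigr\rvert$ strictly decreases. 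If $\mathcal{P}$ is preserved by a single such step—noting that $\mathcal{P}$ of the current permutation forbids $(a,c)$ from being the exchanged pair—then $\mathcal{P}$ survives the whole reduction, so $\Pi_{\downarrow}^{J}(v)$ satisfies $\mathcal{P}$ and in particular $(a,c)\in\inv\bigl(\Pi_{\downarrow}^{J}(v)\bigr)$. Verifying that $\mathcal{P}$ is preserved is a case analysis on how the positions $i<j<k$ of the exchanged $(J,231)$-pattern meet $\{a,c\}$ and the middle positions of $(a,c)$; the only case that threatens the invariant is $i=a$ with $k$ a middle position of $(a,c)$, but then $a<j<k<c$ makes $j$ a middle position too and the pattern inequality $z_j>z_i=z_a$ contradicts $\mathcal{P}(z)$, so that case does not occur. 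The claim $(a,c)\notin\inv\bigl(\Pi_{\uparrow}^{J}(u)\bigr)$ is entirely dual: one uses the property \emph{$(a,c)\notin\inv(z)$ and $z_q<z_a$ for every middle position $q$ of $(a,c)$} (which is the failure of (i) expressed through $u$) and the up-reduction from the proof of \Cref{lem:up_projection}, which never adds $(a,c)$. Carrying out these two preservation case analyses—similar in spirit to the five-case argument inside the proof of \Cref{lem:down_projection}—is the main obstacle.
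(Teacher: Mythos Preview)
Your argument follows the paper's line closely: the implications (i)$\Rightarrow$(ii) and (i)$\Rightarrow$(iii) are handled identically, by deferring to the proofs of \Cref{lem:down_projection} and \Cref{lem:up_projection}. For the converses the paper simply asserts that since $i,k$ are not the outer pair of any $(J,231)$-pattern of $v$, ``the maximality of $\Pi_{\downarrow}^{J}(v)$ implies that $(i,k)\in\inv\bigl(\Pi_{\downarrow}^{J}(v)\bigr)$'' and argues dually for $\Pi_{\uparrow}^{J}$. Your invariant $\mathcal{P}$ and its preservation along the reduction is exactly a way to make that terse claim rigorous, so the two proofs are essentially the same, with yours supplying the missing justification.

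You flag the case analysis as ``the main obstacle'', but in fact you have already done it. The invariant $\mathcal{P}(z)$ can fail after a swap at $(i,k)$ only if $z'_{a}$ decreases (forcing $i=a$, so $z'_{a}=z_{a}-1=z_{k}$) or some middle value $z'_{q}$ increases (forcing $q=k$, so $z'_{k}=z_{k}+1=z_{i}$). In the first situation every middle $q$ has $z_{q}<z_{a}$ and $z_{q}\neq z_{k}$ unless $q=k$, so $z'_{q}\leq z_{a}-2<z'_{a}$ unless $k$ itself is a middle position---precisely the case you dispatched. In the second situation $z'_{k}=z_{i}$, and $z_{i}=z_{a}$ would force $i=a$, again reducing to the case you dispatched; otherwise $z_{i}<z_{a}=z'_{a}$. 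The remaining checks (that $(a,c)$ stays an inversion when one of $a,c$ equals $i$ or $k$) are one-line value comparisons using $z_{i}=z_{k}+1$. The dual verification for $\Pi_{\uparrow}^{J}$ is symmetric. So there is no genuine obstacle left; your sketch is a complete proof once these routine observations are recorded.
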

\begin{proof}
  Let $u\lessdot_{S}v$.  By definition we have $\inv(v)\setminus\inv(u)=\bigl\{(i,k)\bigr\}$ with $v_{i}=v_{k}+1$.  Observe that $i<k$ implies that $i$ and $k$ belong to different $J$-regions.

  Suppose that (i) holds. By assumption there is a ($J$,$231$)-pattern in $v$, which is induced by the indices $i<j<k$.  In view of \Cref{lem:j_compressed_avoiding} we conclude that $\inv(v)$ is not $J$-compressed.  By construction $u$ is the lower cover of $v$ which has $v_{i}$ and $v_{k}$ exchanged.  The proof that $\Pi_{\downarrow}^{J}(u)=\Pi_{\downarrow}^{J}(v)$ now proceeds as in \Cref{lem:down_projection}.  This proves that (i) implies (ii).  An analogous argument using \Cref{lem:up_projection} proves that (i) implies (iii).

  Now suppose that (i) does not hold.  In other words, assume that for any $j\in[n]$ which satisfies $i<j<k$ and does not belong to the same $J$-region as $i$ or $k$ we have $v_{j}<v_{i}$.  In particular, $i$ and $k$ do not participate in any ($J$,$231$)-pattern of $v$, and the maximality of $\Pi_{\downarrow}^{J}(v)$ implies that $(i,k)\in\inv\bigl(\Pi_{\downarrow}^{J}(v)\bigr)$.  On the other hand, since $(i,k)\notin\inv(u)$ we conclude $(i,k)\notin\inv\bigl(\Pi_{\downarrow}^{J}(u)\bigr)$.  Hence $\Pi_{\downarrow}^{J}(u)<_{S}\Pi_{\downarrow}^{J}(v)$.  This proves that (ii) implies (i).  We also see that $i$ and $k$ do not participate in any ($J$,$132$)-pattern of $u$, and the minimality of $\Pi_{\uparrow}^{J}(u)$ implies $(i,k)\notin\inv\bigl(\Pi_{\uparrow}^{J}(u)\bigr)$.  On the other hand $(i,k)\in\inv(v)\subseteq\inv\bigl(\Pi_{\uparrow}^{J}(v)\bigr)$.  Hence $\Pi_{\uparrow}^{J}(u)<_{S}\Pi_{\uparrow}^{J}(v)$.  This proves that (iii) implies (i) and the proof is complete.
\end{proof}

\begin{lemma}\label{lem:up_down_moves}
    If $\Pi_{\downarrow}^{J}(u)=\Pi_{\downarrow}^{J}(v)$ for some $u,v\in\mathfrak{S}_{n}^{J}$, then $\Pi_{\uparrow}^{J}(u)=\Pi_{\uparrow}^{J}(v)$.
\end{lemma}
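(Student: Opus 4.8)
The plan is to reduce the statement to the cover-relation case already settled in \Cref{lem:fiber_moves}, using the order-convexity of the fibers of $\Pi_{\downarrow}^{J}$ from \Cref{lem:convex_fibers}. Set $m := \Pi_{\downarrow}^{J}(u) = \Pi_{\downarrow}^{J}(v)$. Since $m\in\mathfrak{S}_{n}^{J}(231)$, it is itself ($J$,$231$)-avoiding, so $\Pi_{\downarrow}^{J}(m)=m$; and by \Cref{lem:down_projection} (or rather \Cref{def:down_projection}) we have $m\leq_{S}u$ and $m\leq_{S}v$.

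First I would show that the whole weak-order interval $[m,u]$ lies in a single fiber of $\Pi_{\downarrow}^{J}$. Indeed, if $m\leq_{S}x\leq_{S}u$, then applying \Cref{lem:convex_fibers} to the chain $m\leq_{S}x\leq_{S}u$ together with $\Pi_{\downarrow}^{J}(m)=m=\Pi_{\downarrow}^{J}(u)$ gives $\Pi_{\downarrow}^{J}(x)=m$. The same argument applies verbatim to $[m,v]$. (Here $[m,u]$ may be computed inside $\Weak(\mathfrak{S}_{n}^{J})$, which by \cite{bjorner88generalized}*{Theorem~4.1} is the weak-order interval $[\id,\wo^{J}]$ of $\Weak(\mathfrak{S}_{n})$; since $\mathfrak{S}_{n}^{J}$ is then convex in $\Weak(\mathfrak{S}_{n})$ and contains $m$ and $u$, it contains all of $[m,u]$.)

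Next I would choose a saturated chain $m = x_{0}\lessdot_{S}x_{1}\lessdot_{S}\cdots\lessdot_{S}x_{\ell} = u$ in $\Weak(\mathfrak{S}_{n}^{J})$. By the previous step $\Pi_{\downarrow}^{J}(x_{t-1}) = m = \Pi_{\downarrow}^{J}(x_{t})$ for every $t\in\{1,\ldots,\ell\}$, so the implication (ii)$\Rightarrow$(iii) of \Cref{lem:fiber_moves}, applicable because $x_{t-1}\lessdot_{S}x_{t}$, yields $\Pi_{\uparrow}^{J}(x_{t-1}) = \Pi_{\uparrow}^{J}(x_{t})$. Composing these equalities along the chain gives $\Pi_{\uparrow}^{J}(u) = \Pi_{\uparrow}^{J}(m)$. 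Running the identical argument along a saturated chain from $m$ to $v$ gives $\Pi_{\uparrow}^{J}(v) = \Pi_{\uparrow}^{J}(m)$, and therefore $\Pi_{\uparrow}^{J}(u) = \Pi_{\uparrow}^{J}(v)$.

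I do not expect a genuine obstacle here: the lemma is essentially the statement that "$\Pi_{\downarrow}^{J}$-equivalence on a saturated chain propagates to $\Pi_{\uparrow}^{J}$-equivalence," and everything needed for that is already in place. The only point requiring a little care is the one noted parenthetically above — that a saturated chain between $m$ and $u$ can be taken inside $\mathfrak{S}_{n}^{J}$ and that each of its steps is a cover relation of precisely the form $\inv(x_{t})\setminus\inv(x_{t-1})=\bigl\{(i,k)\bigr\}$ with $(x_{t})_{i}=(x_{t})_{k}+1$ to which \Cref{lem:fiber_moves} applies; this is immediate from the convexity of $\mathfrak{S}_{n}^{J}$ in $\Weak(\mathfrak{S}_{n})$.
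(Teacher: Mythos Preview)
Your proof is correct and follows essentially the same approach as the paper: reduce to the cover-relation case via \Cref{lem:fiber_moves}, using \Cref{lem:convex_fibers} to ensure every link of a saturated chain stays in the same $\Pi_{\downarrow}^{J}$-fiber. The only difference is cosmetic: the paper splits into the cases $u\leq_{S}v$ and $u,v$ incomparable, handling the latter by passing through the meet $u\wedge v$, whereas you go directly through $m=\Pi_{\downarrow}^{J}(u)=\Pi_{\downarrow}^{J}(v)$ as a common lower bound for both $u$ and $v$, which avoids the case distinction altogether.
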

\begin{proof}
    Assume that $u\leq_{S}v$. If $u\lessdot_{S}v$, then the claim follows from \Cref{lem:fiber_moves}.  If $u<_{S}v$ do not form a cover relation, we find the desired equality by repeated application of \Cref{lem:fiber_moves} using \Cref{lem:convex_fibers}.

    Otherwise, suppose that $u$ and $v$ are incomparable.  Then, $\Pi_{\downarrow}^{J}(u\wedge v)=\Pi_{\downarrow}^{J}(u)$, since $\Pi_{\downarrow}^{J}(u)=\Pi_{\downarrow}^{J}(v)$ is the unique maximal ($J$,$231$)-avoiding permutation below both $u$ and $v$.  Since $u\wedge v\leq_{S}u$ and $u\wedge v\leq_{S}v$, we conclude $\Pi_{\uparrow}^{J}(u)=\Pi_{\uparrow}^{J}(u\wedge v)=\Pi_{\uparrow}^{J}(v)$ using the argument above.
\end{proof}

\begin{proposition}\label{prop:congruence}
    The equivalence relation $\Theta$ from \eqref{eq:tamari_congruence} is in fact a lattice congruence on $[\id,\wo^{J}]$, and the corresponding quotient lattice is $\Tamari_n^J$.
\end{proposition}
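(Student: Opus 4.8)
The plan is to verify the criterion for a lattice congruence recalled just above the statement: on the finite lattice $\Weak(\mathfrak{S}_n^J)=[\id,\wo^{J}]$, the relation $\Theta$ is a congruence exactly when every $\Theta$-class is an interval and the two maps sending an element to the least, respectively the greatest, element of its class are order-preserving. Since $\Pi_{\downarrow}^{J}$ already sends each $w$ to the least element of its class and is order-preserving by \Cref{lem:projections_order_preserving}, the substantive work is to locate the greatest element of each class and show that it is realized by $\Pi_{\uparrow}^{J}$.

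First I would record the statement dual to \Cref{lem:convex_fibers}, namely that the fibers of $\Pi_{\uparrow}^{J}$ are order-convex; its proof is word for word that of \Cref{lem:convex_fibers} with \Cref{lem:down_projection} replaced by \Cref{lem:up_projection}. With this in hand I would upgrade \Cref{lem:up_down_moves} to an equivalence: for $u,v\in\mathfrak{S}_n^J$, one has $\Pi_{\downarrow}^{J}(u)=\Pi_{\downarrow}^{J}(v)$ if and only if $\Pi_{\uparrow}^{J}(u)=\Pi_{\uparrow}^{J}(v)$. The forward implication is \Cref{lem:up_down_moves}; the reverse is proved by the dual argument, running through the order-convexity of $\Pi_{\uparrow}^{J}$-fibers and the equivalence of (ii) and (iii) in \Cref{lem:fiber_moves}. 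Concretely, for comparable $u\leq_S v$ with $\Pi_{\uparrow}^{J}(u)=\Pi_{\uparrow}^{J}(v)$, a saturated chain from $u$ to $v$ stays inside that $\Pi_{\uparrow}^{J}$-fiber by order-convexity, each of its covers satisfies \Cref{lem:fiber_moves}(iii) hence \Cref{lem:fiber_moves}(ii), so $\Pi_{\downarrow}^{J}$ is constant along the chain; the incomparable case reduces to this by passing to $u\vee v$, which lies in the same $\Pi_{\uparrow}^{J}$-fiber because $\Pi_{\uparrow}^{J}(u\vee v)$ is squeezed between $\Pi_{\uparrow}^{J}(u)$ and the common value, using minimality of $\Pi_{\uparrow}^{J}$ together with \Cref{lem:projections_order_preserving}.

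It then follows that the fibers of $\Pi_{\downarrow}^{J}$ and of $\Pi_{\uparrow}^{J}$ coincide, and each such fiber is precisely a $\Theta$-class. A $\Theta$-class $C$ therefore has least element $\Pi_{\downarrow}^{J}(w)$ for any $w\in C$ and, being a $\Pi_{\uparrow}^{J}$-fiber, greatest element $\Pi_{\uparrow}^{J}(w)$: the latter lies in $C$ because $\Pi_{\uparrow}^{J}$ fixes $(J,132)$-avoiding permutations, and it dominates every $x\in C$ since $x\leq_S\Pi_{\uparrow}^{J}(x)=\Pi_{\uparrow}^{J}(w)$. By \Cref{lem:convex_fibers}, $C=\bigl[\Pi_{\downarrow}^{J}(w),\Pi_{\uparrow}^{J}(w)\bigr]$ is an interval. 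Thus all $\Theta$-classes are intervals, the least-element map is $\Pi_{\downarrow}^{J}$ and the greatest-element map is $\Pi_{\uparrow}^{J}$, both order-preserving by \Cref{lem:projections_order_preserving}, and the recalled criterion shows that $\Theta$ is a lattice congruence on $[\id,\wo^{J}]$.

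Finally, to identify the quotient I would use the order-preserving surjection $\Pi_{\downarrow}^{J}\colon[\id,\wo^{J}]\to\mathfrak{S}_n^J(231)$, which is the identity on $\mathfrak{S}_n^J(231)$ and whose fibers are exactly the $\Theta$-classes. Hence $[w]_{\Theta}\mapsto\Pi_{\downarrow}^{J}(w)$ is a well-defined bijection from the quotient lattice onto $\mathfrak{S}_n^J(231)$, and it is an order isomorphism onto $\Weak\bigl(\mathfrak{S}_n^J(231)\bigr)=\Tamari_n^J$: if representatives satisfy $u_0\leq_S v_0$ then $\Pi_{\downarrow}^{J}(u)=\Pi_{\downarrow}^{J}(u_0)\leq_S\Pi_{\downarrow}^{J}(v_0)=\Pi_{\downarrow}^{J}(v)$ by order-preservation, and conversely $\Pi_{\downarrow}^{J}(u)\leq_S\Pi_{\downarrow}^{J}(v)$ already exhibits comparable representatives. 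The step I expect to cost the most care is the upgrade of \Cref{lem:up_down_moves} to an equivalence, i.e.\ showing that $\Pi_{\downarrow}^{J}$ and $\Pi_{\uparrow}^{J}$ have the same fibers; everything else is bookkeeping with the lemmas already in place and the cited lattice-theoretic criterion. Combined with \Cref{prop:j_231_avoiding_lattice}, this also completes the proof of \Cref{thm:parabolic_tamari_lattice}.
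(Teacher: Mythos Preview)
Your proof is correct and follows essentially the same route as the paper: verify the interval-and-order-preserving-projections criterion using \Cref{lem:down_projection,lem:convex_fibers,lem:up_projection,lem:projections_order_preserving,lem:fiber_moves,lem:up_down_moves}, and then read off the quotient via $\Pi_{\downarrow}^{J}$. The one place you are more careful than the paper is in explicitly upgrading \Cref{lem:up_down_moves} to an equivalence (so that the fibers of $\Pi_{\downarrow}^{J}$ and $\Pi_{\uparrow}^{J}$ coincide and $\Pi_{\uparrow}^{J}(w)$ genuinely lies in $[w]_{\Theta}$); the paper asserts this step tersely, so your added detail is welcome rather than a deviation.
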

\begin{proof}
	\Cref{lem:down_projection} implies that the equivalence classes of $\Theta$ have a least element, and these minimal elements are precisely the elements of $\mathfrak{S}_{n}^{J}(231)$.  \Cref{lem:up_down_moves} implies together with \Cref{lem:up_projection} that equivalence classes have a greatest element, and \Cref{lem:convex_fibers} implies that the equivalence class $[w]_{\Theta}$ is in fact equal to the interval $\bigl[\Pi_{\downarrow}^{J}(w),\Pi_{\uparrow}^{J}(w)\bigr]$.  \Cref{lem:projections_order_preserving} now completes the proof.
\end{proof}

\begin{proof}[Proof of \Cref{thm:parabolic_tamari_lattice}]
    This follows from \Cref{prop:j_231_avoiding_lattice,prop:congruence}.
\end{proof}

\section{Parabolic Noncrossing Partitions}
  \label{sec:noncrossing}

In this section, we define the set $\NC_n^J$ of noncrossing partitions for parabolic quotients, and give an explicit bijection between $\NC_n^J$ and $\mathfrak{S}_n^J(231)$.

Recall that a set partition of $[n]$ is a collection $\Pf=\{P_{1},P_{2},\ldots,P_{s}\}$ of pairwise disjoint, nonempty subsets of $[n]$ with the property that their union is $[n]$. The elements $P_{i}$ of $\Pf$ are called the \defn{parts} of $\Pf$. A pair $(a,b)$ is a \defn{bump} of $\Pf$ if $a,b\in P_{i}$ for some $i\in[s]$ and there is no $c\in P_{i}$ with $a<c<b$.  Classically, a set partition is \defn{noncrossing} if it does not contain two bumps $(i_{1},j_{1})$ and $(i_{2},j_{2})$ such that $i_{1}<i_{2}<j_{1}<j_{2}$~\cite{kreweras72sur}.  We introduce the following generalization.

\begin{definition}\label{def:parabolic_noncrossing_set_partitions}
    A partition $\Pf$ of $[n]$ is \defn{$J$-noncrossing} if it satisfies the following three conditions.
    \begin{description}
      \item[NC1\label{it:nc1}] If $i$ and $j$ lie in the same $J$-region, then they are not contained in the same part of $\Pf$.
      \item[NC2\label{it:nc2}] If two distinct bumps $(i_{1},i_{2})$ and $(j_{1},j_{2})$ of $\Pf$ satisfy $i_{1}<j_{1}<i_{2}<j_{2}$, then either $i_{1}$ and $j_{1}$ lie in the same $J$-region or $i_{2}$ and $j_{1}$ lie in the same $J$-region.
      \item[NC3\label{it:nc3}] If two distinct bumps $(i_{1},i_{2})$ and $(j_{1},j_{2})$ of $\Pf$ satisfy $i_{1}<j_{1}<j_{2}<i_{2}$, then $i_{1}$ and $j_{1}$ lie in different $J$-regions.
    \end{description}
\end{definition}

We denote the set of all $J$-noncrossing set partitions of $[n]$ by $\NC^J_n$.  If $J=\emptyset$, then we recover the classical noncrossing set partitions.  We now introduce a combinatorial model for the $J$-noncrossing partitions. We draw $n$ dots, labeled by the numbers $1,2,\ldots,n$, on a straight line, and highlight the $J$-regions by grouping the corresponding dots together. For any bump $(i,j)$ in $\Pf\in\NC^J_n$, we draw an arc connecting the dots corresponding to $i$ and $j$, respectively, that passes below all dots corresponding to indices $k>i$ that lie in the same $J$-region as $i$, and above all other dots between $i$ and $j$. See the bottom left of \Cref{fig:j_nonnesting_example} for an illustration.

Let $\Pf\in\NC_n^J$ and define a binary relation $\vec{R}_{\Pf}$ on the parts of $\Pf$ by setting $(B,B')\in\vec{R}_{\Pf}$ if there exists a bump $(i_1,i_2)$ of $\Pf$ with $i_1,i_2\in B$ such that $i_1<\min B'<i_2$.  This relation is certainly acyclic, and can therefore be extended to an order relation by taking reflexive and transitive closures.  Let $\vec{O}_{\Pf}$ be the partially ordered set whose ground sets are the parts of $\Pf$, and whose order relation is the reflexive and transitive closure of $\vec{R}_{\Pf}$.

\begin{figure}
  \centering
  \begin{tikzpicture}
    \draw(1,1) node{
      \begin{tikzpicture}\small
        \def\x{.5};
        \def\y{1};
        \draw(2*\x,3*\y) node{};
        \draw(1*\x,2.5*\y) node[circle,fill,scale=.4](n1){};
        \draw(1*\x,2.1*\y) node{$1$};
        \draw(2*\x,2.5*\y) node[circle,fill,scale=.4](n2){};
        \draw(2*\x,2.1*\y) node{$2$};
        \draw(3*\x,2.5*\y) node[circle,fill,scale=.4](n3){};
        \draw(3*\x,2.1*\y) node{$3$};
        \draw(4*\x,2.5*\y) node[circle,fill,scale=.4](n4){};
        \draw(4*\x,2.1*\y) node{$4$};
        \draw(5*\x,2.5*\y) node[circle,fill,scale=.4](n5){};
        \draw(5*\x,2.1*\y) node{$5$};
        \draw(6*\x,2.5*\y) node[circle,fill,scale=.4](n6){};
        \draw(6*\x,2.1*\y) node{$6$};
        \draw(7*\x,2.5*\y) node[circle,fill,scale=.4](n7){};
        \draw(7*\x,2.1*\y) node{$7$};
        \draw(8*\x,2.5*\y) node[circle,fill,scale=.4](n8){};
        \draw(8*\x,2.1*\y) node{$8$};
        \draw(9*\x,2.5*\y) node[circle,fill,scale=.4](n9){};
        \draw(9*\x,2.1*\y) node{$9$};
        \draw(10*\x,2.5*\y) node[circle,fill,scale=.4](n10){};
        \draw(10*\x,2.1*\y) node{$10$};
        \draw[thick](n2) .. controls (2.1*\x,2.2*\y) and  (4.6*\x,2.2*\y) ..  (4.7*\x,2.5*\y) .. controls (4.8*\x,2.8*\y) and (8.9*\x,2.8*\y) .. (n9);
        \draw[thick](n3) .. controls (3.1*\x,2.3*\y) and  (4.3*\x,2.3*\y) ..  (4.4*\x,2.5*\y) .. controls (4.5*\x,2.9*\y) and (9.9*\x,2.9*\y) .. (n10);
        \draw[thick](n6) .. controls (6.1*\x,2.3*\y) and  (6.5*\x,2.3*\y) ..  (6.5*\x,2.5*\y) .. controls (6.6*\x,2.7*\y) and (7.9*\x,2.7*\y) .. (n8);
        \begin{pgfonlayer}{background}
          \fill[gray!50!white](1*\x,2.5*\y) circle(.2);
          \fill[gray!50!white](4*\x,2.5*\y) circle(.2);
          \fill[gray!50!white](5*\x,2.5*\y) circle(.2);
          \fill[gray!50!white](6*\x,2.5*\y) circle(.2);
          \fill[gray!50!white](7*\x,2.5*\y) circle(.2);
          \fill[gray!50!white](8*\x,2.5*\y) circle(.2);
          \fill[gray!50!white](9*\x,2.5*\y) circle(.2);
          \fill[gray!50!white](10*\x,2.5*\y) circle(.2);
          \fill[gray!50!white](1*\x,2.3*\y) -- (4*\x,2.3*\y) -- (4*\x,2.7*\y) -- (1*\x,2.7*\y) -- (1*\x,2.3*\y);
          \fill[gray!50!white](5*\x,2.3*\y) -- (6*\x,2.3*\y) -- (6*\x,2.7*\y) -- (5*\x,2.7*\y) -- (5*\x,2.3*\y);
          \fill[gray!50!white](8*\x,2.3*\y) -- (9*\x,2.3*\y) -- (9*\x,2.7*\y) -- (8*\x,2.7*\y) -- (8*\x,2.3*\y);
        \end{pgfonlayer}
        \draw(1*\x,1.5*\y) node{$\downarrow$};
        \draw(1*\x,1*\y) node{$1$};
        \draw(2*\x,1.5*\y) node{$\downarrow$};
        \draw(2*\x,1*\y) node{$7$};
        \draw(3*\x,1.5*\y) node{$\downarrow$};
        \draw(3*\x,1*\y) node{$9$};
        \draw(4*\x,1.5*\y) node{$\downarrow$};
        \draw(4*\x,1*\y) node{$10$};
        \draw(4.5*\x,1*\y) node{$\vert$};
        \draw(5*\x,1.5*\y) node{$\downarrow$};
        \draw(5*\x,1*\y) node{$2$};
        \draw(6*\x,1.5*\y) node{$\downarrow$};
        \draw(6*\x,1*\y) node{$5$};
        \draw(6.5*\x,1*\y) node{$\vert$};
        \draw(7*\x,1.5*\y) node{$\downarrow$};
        \draw(7*\x,1*\y) node{$3$};
        \draw(7.5*\x,1*\y) node{$\vert$};
        \draw(8*\x,1.5*\y) node{$\downarrow$};
        \draw(8*\x,1*\y) node{$4$};
        \draw(9*\x,1.5*\y) node{$\downarrow$};
        \draw(9*\x,1*\y) node{$6$};
        \draw(9.5*\x,1*\y) node{$\vert$};
        \draw(10*\x,1.5*\y) node{$\downarrow$};
        \draw(10*\x,1*\y) node{$8$};
      \end{tikzpicture}};
    \draw(7,1) node{
      \begin{tikzpicture}\small
        \def\x{1};
        \def\y{1};
        \draw(1*\x,1*\y) node(o1){$\{1\}$};
        \draw(2*\x,1*\y) node(o2){$\{2,9\}$};
        \draw(3.2*\x,1*\y) node(o3){$\{3,10\}$};
        \draw(4.2*\x,1*\y) node(o4){$\{4\}$};
        \draw(2*\x,2*\y) node(o5){$\{5\}$};
        \draw(3.2*\x,2*\y) node(o6){$\{6,8\}$};
        \draw(3.2*\x,3*\y) node(o7){$\{7\}$};
        \draw(o2) -- (o5);
        \draw(o2) -- (o6);
        \draw(o3) -- (o5);
        \draw(o3) -- (o6);
        \draw(o6) -- (o7);
      \end{tikzpicture}};
  \end{tikzpicture}
  \caption{The noncrossing partition $\Pf=\bigl\{\{1\},\{2,9\},\{3,10\},\{4\},\{5\},\{6,8\},\{7\}\bigr\}$ with respect to $n=10$ and $J=\{s_1,s_2,s_3,s_5,s_8\}$, the corresponding partially ordered set $\vec{O}_\Pf$, and the $(J,231)$-avoiding permutation constructed from $\Pf$.}
  \label{fig:bump_poset_illustration}
\end{figure}

\Cref{fig:bump_poset_illustration} illustrates this construction in the case $n=10$ and $J=\{s_1,s_2,s_3,s_5,s_8\}$.  We may now prove the following theorem.

\begin{theorem}\label{thm:j_sortables_noncrossings}
  For $n>0$ and $J\subseteq S$, there is a bijection $\mathfrak{S}_{n}^{J}(231) \simeq \NC^J_{n}$.
\end{theorem}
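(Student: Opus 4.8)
The plan is to exhibit explicit mutually inverse maps $\phi\colon\mathfrak{S}_n^J(231)\to\NC^J_n$ and $\psi\colon\NC^J_n\to\mathfrak{S}_n^J(231)$, the first built from the descent combinatorics of a permutation and the second from the bump structure of a partition together with the poset $\vec{O}_{\Pf}$, using throughout the characterization of $(J,231)$-avoiding permutations by $J$-compressed inversion sets (\Cref{lem:j_compressed_avoiding}) and the increasing-region description of $\mathfrak{S}_n^J$ (\Cref{lem:increasing_regions}).

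\emph{The forward map.} I define $\phi(w)$ to be the partition of the position set $[n]$ whose blocks are the connected components of the graph on $[n]$ with an edge $\{i,j\}$ exactly when $(i,j)\in\des(w)$, i.e.\ when $i<j$ and $w_i=w_j+1$. First I would note that each position is the left endpoint of at most one such edge and the right endpoint of at most one, so the graph is a disjoint union of paths, and that an interior vertex $v$ of a path has neighbours $v''<v<v'$, so each path is increasing in position. Thus, listing a block as $a_1<a_2<\dots<a_k$, its bumps are precisely the consecutive pairs $(a_t,a_{t+1})$, all lying in $\des(w)$, and $w_{a_1}>w_{a_2}>\dots>w_{a_k}$ is a run of consecutive integers. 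From this, NC1 holds because two positions of a block in one $J$-region would force $w$ to both increase (\Cref{lem:increasing_regions}) and decrease between them; NC2 and NC3 hold because a violating pair of bumps $(i_1,i_2),(j_1,j_2)$ together with $w_{i_1}=w_{i_2}+1$, $w_{j_1}=w_{j_2}+1$ and region monotonicity produces three positions in three distinct $J$-regions forming a $(J,231)$-pattern --- for NC3 one uses the triple $(i_1,j_2,i_2)$, and for NC2 either $(i_1,j_1,i_2)$ or $(j_1,i_2,j_2)$ depending on whether $w_{j_1}>w_{i_1}$ or $w_{j_1}<w_{i_1}$; in each case region monotonicity rules out any two of the three positions sharing a region --- contradicting $(J,231)$-avoidance.

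\emph{The inverse map.} Given $\Pf\in\NC^J_n$, NC1 guarantees each part $B=\{a_1<\dots<a_k\}$ meets each $J$-region at most once, so the positions $a_1,\dots,a_k$ can be filled with $k$ consecutive values placed in decreasing order; the construction of $\psi(\Pf)$ assigns value-blocks to the parts so that the resulting word is increasing on every $J$-region (hence lies in $\mathfrak{S}_n^J$ by the converse of \Cref{lem:increasing_regions}), the partial order $\vec{O}_{\Pf}$ recording the constraints imposed by bumps that straddle the minimum of a smaller part. One then checks $\psi(\Pf)$ is $(J,231)$-avoiding: a $(J,231)$-pattern $a<b<c$ forces $a,c$ into one part of $\Pf$ with $b$ strictly between them in position but in a different part, and unwinding the value-assignment shows this would violate NC2, NC3, or the $\vec{O}_{\Pf}$-compatibility of the assignment, by a case analysis dual to the one above. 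By construction $\des(\psi(\Pf))$ is exactly the bump set of $\Pf$.

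\emph{Mutual inverseness and the main obstacle.} Then $\phi\circ\psi=\mathrm{id}$ because the blocks of $\phi(\psi(\Pf))$ are the connected components of the bump graph of $\Pf$, which are the parts of $\Pf$; and $\psi\circ\phi=\mathrm{id}$ because $\psi(\phi(w))$ shares with $w$ its block structure and its set $\des(w)$, is increasing on $J$-regions since $w\in\mathfrak{S}_n^J$, and is $(J,231)$-avoiding, so the uniqueness built into $\psi$ forces equality. I expect the main obstacle to be precisely the construction and analysis of $\psi$: proving that the region-monotonicity constraints together with $\vec{O}_{\Pf}$ admit a unique solution and that this solution avoids $(J,231)$-patterns. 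This is where conditions NC2 and NC3 do their real work --- for crossing and nesting pairs of bumps they play exactly the role that NC1 plays in making a single block's decreasing value-run compatible with increase along the $J$-regions.
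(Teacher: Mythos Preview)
Your overall strategy---sending $w$ to the partition whose bumps are $\des(w)$, and building the inverse with the help of $\vec{O}_{\Pf}$---is exactly the paper's, and your verification of (NC1)--(NC3) for $\phi(w)$ is correct. In fact your choice of the triple $(i_{1},j_{2},i_{2})$ for (NC3) is cleaner than the more tempting $(i_{1},j_{1},i_{2})$, which fails the ``three distinct regions'' requirement precisely because $i_{1}$ and $j_{1}$ share a region in the violating configuration.

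The genuine gap is the one you yourself flag: you do not actually construct $\psi$. Saying that $\psi$ ``assigns value-blocks to the parts so that the resulting word is increasing on every $J$-region'' is a specification, not an algorithm, and you never show that such an assignment exists, let alone that it is unique. The paper fills this in with an explicit recursion. Take the part $\bar{P}$ of $\Pf$ containing $1$, let $X$ be the union of all parts in the order filter of $\vec{O}_{\Pf}$ generated by $\bar{P}$, and set $w(1)=\lvert X\rvert$. Deleting $\bar{P}$ then separates what remains into a ``left'' piece on $X\setminus\bar{P}$ (to receive the small values $1,\ldots,\lvert X\rvert-\lvert\bar{P}\rvert$) and a ``right'' piece on $[n]\setminus X$ (to receive the large values $\lvert X\rvert+1,\ldots,n$), each a smaller parabolic noncrossing partition on which one recurses. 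This makes $\psi$ explicit, shows directly that bumps become descents, and---because each step is forced---delivers the uniqueness you invoke for $\psi\circ\phi=\mathrm{id}$. Without this recursion (or an independent proof that a $(J,231)$-avoiding element of $\mathfrak{S}_{n}^{J}$ is determined by its descent set), your inverse direction remains a promissory note rather than a proof.
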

\begin{proof}
  Let $w\in\mathfrak{S}_{n}^{J}(231)$. We construct a set partition $\Pf$ of $[n]$ by associating a bump $(i,j)$ with every descent $(i,j)\in\des(w)$.  If $(i,j)$ is a bump of $\Pf$, then $(i,j)\in\des(w)$, and \Cref{lem:increasing_regions} implies that $i$ and $j$ lie in different $J$-regions.  This establishes condition \eqref{it:nc1}.  Suppose $(i_{1},i_{2})$ and $(j_{1},j_{2})$ are two different bumps of $\Pf$ with $i_{1}<j_{1}<i_{2}<j_{2}$, but neither $i_{1},j_{1}$ nor $i_{2},j_{1}$ are in the same $J$-region.  If $w_{i_{1}}<w_{j_{1}}$, then $(i_{1},j_{1},i_{2})$ is a ($J$,$231$)-pattern in $w$, which is a contradiction.  If $w_{i_{1}}>w_{j_{1}}$, it follows that $w_{j_{1}}<w_{i_{2}}$, and then $(j_{1},i_{2},j_{2})$ is a ($J$,$231$)-pattern in $w$, which is a contradiction.  Hence \eqref{it:nc2} is satisfied. Finally, suppose that $(i_{1},i_{2})$ and $(j_{1},j_{2})$ are two different bumps of $\Pf$ with $i_{1}<j_{1}<j_{2}<i_{2}$ such that $i_{1}$ and $j_{1}$ are in the same $J$-region.  \Cref{lem:increasing_regions} implies $w_{i_{1}}<w_{j_{1}}$.  It follows that $(i_{1},j_{1},i_{2})$ is a ($J$,$231$)-pattern in $w$, which is a contradiction.  Hence \eqref{it:nc3} is satisfied, and so $\Pf\in\NC^J_n$.

  \smallskip

  Conversely, let $\Pf\in\NC^J_n$.  We construct a permutation $w\in\mathfrak{S}_{n}^{J}(231)$ where every bump $(i,j)$ of $\Pf$ corresponds to a descent $(i,j)\in\des(w)$.  We proceed by induction on $n$, with the case $n=1$ being trivial.  Suppose that for any $n'<n$ we can construct a ($J'$,$231$)-avoiding permutation of $\mathfrak{S}_{n'}^{J'}$ from a given $J'$-noncrossing set partition of $[n']$, where $J'$ is the restriction of $J$ to $[n']$.  

Let $\vec{O}_{\Pf}$ be the partially ordered set on the parts of $\Pf$ that we have defined just before this theorem.  Let $\bar{P}$ be the unique part of $\Pf$ containing $1$, and let $X$ be the set of all integers that belong to parts in the order filter generated by $\bar{P}$.  We set $w(1)=\lvert X\rvert$.  

If we remove $\bar{P}$ from $\Pf$, then we obtain two smaller partitions from the remaining parts.  The elements in $X\setminus\bar{P}$ form a left partition $\Pf_l$, and the elements in $[n]\setminus X$ form a right partition $\Pf_r$.  Both $\Pf_{l}$ and $\Pf_{r}$ can be seen as parabolic noncrossing set partitions of $[n_l]$ and $[n_r]$, respectively, where $n_{l},n_{r}<n$.  By induction we can create $(J,231)$-avoiding permutations $w^{(l)}$ and $w^{(r)}$ from these partitions (and we may reuse $\vec{O}_{\Pf}$ for that). Now we obtain the value $w_{j}$ for $j\notin\bar{P}$ as follows.
  If $j\in\Pf_{l}$, then $w_{j}=w_{j'}^{(l)}$ if $j$ is the $(j')^{\text{th}}$ largest value in $\Pf_{l}$.
  If $j\in\Pf_{r}$, then $w_{j}=w_{j'}^{(r)}+\lvert X\rvert$ if $j$ is the $(j')^{\text{th}}$ largest value in $\Pf_{r}$.

  Since all bumps in $\Pf$ occur only between elements in $\bar{P}$, in $\Pf_{l}$, or in $\Pf_{r}$, it follows that $w\in\mathfrak{S}_{n}^{J}(231)$.
\end{proof}

\begin{example}
  Let $J=\{s_{1},s_{2},s_{3},s_{5},s_{8}\}$.  Consider $\Pf\in\NC_{10}^{J}$ given by the bumps $(2,9),(3,10),(6,8)$.  This partition is displayed in the top-left part of \Cref{fig:bump_poset_illustration}, the corresponding poset $\vec{O}_{\Pf}$ on the right.  Since no bump starts in $1$, we obtain $w_1=1$, and the corresponding right partition is the restriction of $\Pf$ to $\{2,3,\ldots,10\}$. Here we have $\bar{P}=\{2,9\}$, and we have $X=\{2,5,6,7,8,9\}$.  Hence we obtain $w_{2}=7$ and $w_{9}=6$. The corresponding left partition is $\Pf_{l}=\bigl\{\{5\},\{6,8\},\{7\}\bigr\}$ and the corresponding right partition is $\Pf_{r}=\bigl\{\{3,10\},\{4\}\bigr\}$.  By induction, we conclude that $w^{(l)}=1\;4\;|\;2\;|\;3$ and $w^{(r)}=2\;3\;|\;1$. We fashion them together to form the permutation $w=1\;7\;9\;10\;|\;2\;5\;|\;3\;|\;4\;6\;|\;8$, which is indeed contained in $\mathfrak{S}_{10}^{J}(231)$.  By construction, $\bigl\{(2,9),(3,10),(6,8)\bigr\}$ are the descents of $w$, and are precisely the bumps of $\Pf$.
\end{example}

\begin{remark}
  When restricted to the ($J$,$231$)-sortable elements, one can check that the bijection of \Cref{thm:j_sortables_noncrossings} is identical to the bijection given in~\cite{reading15noncrossing} between elements of the symmetric group and certain noncrossing arc diagrams. \Cref{thm:j_sortables_noncrossings} was discovered independently, and appeared in~\cite{williams13cataland}.
\end{remark}

\section{Parabolic Nonnesting Partitions}
  \label{sec:nonnesting}

The nonnesting set partitions are a second important subset of the set partitions of $[n]$.  \defn{Nonnesting} set partitions are characterized as not containing two bumps $(i_{1},i_{2})$ and $(j_{1},j_{2})$ such that $i_{1}<j_{1}<j_{2}<i_{2}$.  These were introduced by A.~Postnikov uniformly for all crystallographic Coxeter groups as order ideals in the corresponding root poset \cite{reiner97non}*{Remark~2}.  It turns out that (for any crystallographic Coxeter group) noncrossing and nonnesting partitions are equinumerous.  Moreover, they are also equidistributed by part size~\cite{athanasiadis98on}.  We introduce the following generalization.

\begin{definition}\label{def:parabolic_nonnesting_set_partitions}
  A partition $\Pf$ of $[n]$ is \defn{$J$-nonnesting} if it satisfies the following two conditions.
  \begin{description}
    \item[NN1\label{it:nn1}] If $i$ and $j$ lie in the same $J$-region, then they are not contained in the same part of $\Pf$.
    \item[NN2\label{it:nn2}] If $(i_{1},i_{2})$ and $(j_{1},j_{2})$ are two distinct bumps of $\Pf$, then it is not the case that $i_{1}<j_{1}<j_{2}<i_{2}$.
  \end{description}
\end{definition}

We denote the set of all $J$-nonnesting partitions of $[n]$ by $\NN^J_n$.  If $J=\emptyset$, then we recover the classical nonnesting set partitions.

Recall that the \defn{root poset} of $\mathfrak{S}_{n}$ is the poset $\Phi_{+}=(T,\leq)$, where $T$ is the set of all transpositions of $\mathfrak{S}_{n}$, and we have $(i_{1},i_{2})\leq(j_{1},j_{2})$ if and only if $i_{1}\geq j_{1}$ and $i_{2}\leq j_{2}$. The \defn{parabolic root poset} of $\mathfrak{S}_{n}$, denoted by $\Phi_{+}^{J}$, is the order filter of $\Phi_{+}$ generated by the adjacent transpositions \emph{not} in $J$.

We first observe that $J$-nonnesting partitions of $[n]$ are in bijection with order ideals in this parabolic root poset.  See Figure~\ref{fig:nonnesting_ideals} for an illustration.

\begin{lemma}
  For $n>0$ and $J\subseteq S$, there is a bijection from $J$-nonnesting partitions to order ideals in $\Phi_{+}^J$.
\end{lemma}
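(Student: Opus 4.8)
The plan is to go through the set of bumps. A set partition is the same data as its set of bumps (joined back up by transitive closure), and I claim that conditions \eqref{it:nn1} and \eqref{it:nn2} say exactly that this set of bumps is an antichain of $\Phi_+^J$. Since $\Phi_+^J$ is finite, antichains and order ideals carry the same information ($A\mapsto$ its down-set, and an order ideal $\mapsto$ its set of maximal elements), so a bijection with antichains is a bijection with order ideals.

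First I would record the combinatorial description of $\Phi_+^J$ that makes everything transparent: a transposition $(a,b)$ with $a<b$ lies in $\Phi_+^J$ if and only if $a$ and $b$ lie in different $J$-regions. Indeed $\Phi_+^J$ is the order filter of $\Phi_+$ generated by the $(j_l,j_l+1)$ with $s_{j_l}\notin J$, and $(j_l,j_l+1)\le(a,b)$ amounts to $a\le j_l<b$, i.e.\ some region boundary $j_l$ separates $a$ from $b$. Now let $\Pf\in\NN_n^J$ and write $D(\Pf)$ for its set of bumps. By \eqref{it:nn1} the two endpoints of any bump lie in different $J$-regions, so $D(\Pf)\subseteq\Phi_+^J$; moreover distinct bumps of a set partition never share an endpoint, and two distinct bumps are comparable in $\Phi_+$ precisely when one nests inside the other, which \eqref{it:nn2} forbids, so $D(\Pf)$ is an antichain of $\Phi_+^J$. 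Define the map from $\NN_n^J$ to order ideals of $\Phi_+^J$ by sending $\Pf$ to the down-set of $D(\Pf)$. It is injective: one recovers $D(\Pf)$ from the order ideal as its set of maximal elements, and one recovers $\Pf$ from $D(\Pf)$ by transitive closure.

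For surjectivity I would start from an order ideal $I$, let $A$ be its antichain of maximal elements, and let $\Pf$ be the partition of $[n]$ whose parts are the connected components of the graph on vertex set $[n]$ with edge set $A$. The goal is to show $\Pf\in\NN_n^J$ and $D(\Pf)=A$. Condition \eqref{it:nn1} holds because no arc of $A\subseteq\Phi_+^J$ has both endpoints in one $J$-region, so the subgraph induced on a $J$-region is edgeless and cannot connect two of its elements. Once $D(\Pf)=A$ is known, condition \eqref{it:nn2} follows at once, since a nesting pair of bumps would be a comparable pair in the antichain $A$.

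The heart of the proof is the equality $D(\Pf)=A$, and here I would argue exactly as in the classical case $J=\emptyset$. Because $A$ is an antichain of $\Phi_+$, distinct arcs of $A$ have distinct left endpoints and distinct right endpoints, and whenever two arcs have $a<c$ as left endpoints they also have $b<d$ as right endpoints (otherwise one interval would contain the other). Hence two arcs of $A$ can share a vertex only when the right endpoint of one equals the left endpoint of the other; so the graph $([n],A)$ has maximum degree $2$ and no cycles, and each component containing an edge is a chain $c_1<c_2<\cdots<c_{t+1}$ with every $(c_i,c_{i+1})\in A$. Its underlying integer set is exactly $\{c_1,\dots,c_{t+1}\}$, and — since any arc of $A$ with both endpoints among the $c_i$ would duplicate a left endpoint — its bumps are precisely the arcs $(c_i,c_{i+1})$; isolated vertices contribute singleton parts with no bumps. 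Summing over components yields $D(\Pf)=A$. This chain structure is the only step that requires genuine care: it is what rules out that merging two arcs of $A$ into one part either creates a short bump not in $A$ or swallows an arc of $A$ so that it is no longer a bump. Everything else is bookkeeping with \eqref{it:nn1} and \eqref{it:nn2}.
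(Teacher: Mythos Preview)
Your approach is essentially the same as the paper's: both arguments identify $J$-nonnesting partitions with antichains of $\Phi_+^J$ via their sets of bumps, and then invoke the standard bijection between antichains and order ideals in a finite poset. The paper's treatment of the chain structure and of \eqref{it:nn1} is terser than yours but follows the same line.

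There is one genuine difference worth flagging. You send an antichain $A$ to its down-set (so that $A$ is the set of \emph{maximal} elements of the associated ideal), whereas the paper sends $A$ to the complement of its up-set (so that $A$ is the set of \emph{minimal elements of the complement} of the ideal). Both are perfectly good bijections, but they are not the same map, and the paper's choice is the one illustrated in the figure accompanying the lemma. Since the lemma only asserts the existence of a bijection, your version proves it; but you should be aware that your bijection differs from the one the paper goes on to use.

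One small gap: your argument for \eqref{it:nn1} (``the subgraph induced on a $J$-region is edgeless and cannot connect two of its elements'') is not sufficient as written, since an edgeless induced subgraph does not by itself prevent two of its vertices from lying in the same component of the ambient graph. The correct justification uses the chain structure you establish afterwards: if $c_i$ and $c_j$ with $i<j$ lie in the same (interval) $J$-region, then so do all of $c_i,c_{i+1},\dots,c_j$, contradicting $(c_i,c_{i+1})\in\Phi_+^J$. Simply defer the verification of \eqref{it:nn1} until after you have shown that the components are increasing chains.
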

\begin{proof}
  Let $I$ be an order ideal of $\Phi_{+}^J$, and let $M$ denote the set of minimal elements in the complement of $I$.  In particular, $M$ is an antichain, \ie no two elements of $M$ are comparable.  Thus if $(i_1,i_2),(j_1,j_2)\in M$ and $i_1<j_1$, then $i_2\in\{j_1,j_1+1,\ldots,j_2-1\}$ (and accordingly if $j_1<i_1$).  Hence, \eqref{it:nn2} is satisfied.  If there are two distinct elements $i_0$ and $i_k$ which belong to the same $J$-region and to the same part $B$ of $\Pf$, then there must be a sequence of bumps $(i_0,i_1)$, $(i_1,i_2)$, $\ldots$, $(i_{k-1},i_k)$, which belong to this $J$-region and to $B$ as well.  This contradicts the definition of $\Phi_{+}^J$, because we have specifically excluded pairs of the form $(a,b)$ with $a$ and $b$ both belonging to the same $J$-region.  This contradiction shows that \eqref{it:nn1} is satisfied.

  Conversely, let $\Pf\in\NN_n^J$, and let $M$ be the set of bumps of $\Pf$.  By \eqref{it:nn1} we see that $M\subseteq\Phi_{+}^{J}$.  If there exist $(i_1,i_2),(j_1,j_2)\in M$ which are comparable in $\Phi_{+}^J$, then without loss of generality we may assume that $i_1\geq j_1$ and $i_2\leq j_2$.  Since naturally $i_1<i_2$ we obtain a contradiction to \eqref{it:nn2}.
\end{proof}

\begin{figure}
  \centering
  \begin{tikzpicture}\small
    \def\x{1};
    \def\y{.5};
    \draw[opacity=.2](1*\x,1*\y) node[draw,circle,scale=.6](p12){};
    \draw[opacity=.2](2*\x,1*\y) node[draw,circle,scale=.6](p23){};
    \draw[opacity=.2](3*\x,1*\y) node[draw,circle,scale=.6](p34){};
    \draw(4*\x,1*\y) node[draw,circle,scale=.6](p45){};
    \draw[opacity=.2](5*\x,1*\y) node[draw,circle,scale=.6](p56){};
    \draw(6*\x,1*\y) node[draw,circle,scale=.6](p67){};
    \draw(7*\x,1*\y) node[draw,circle,scale=.6](p78){};
    \draw[opacity=.2](8*\x,1*\y) node[draw,circle,scale=.6](p89){};
    \draw(9*\x,1*\y) node[draw,circle,scale=.6](p90){};
    \draw[opacity=.2](1.5*\x,2*\y) node[draw,circle,scale=.6](p13){};
    \draw[opacity=.2](2.5*\x,2*\y) node[draw,circle,scale=.6](p24){};
    \draw(3.5*\x,2*\y) node[draw,circle,scale=.6](p35){};
    \draw(4.5*\x,2*\y) node[draw,circle,scale=.6](p46){};
    \draw(5.5*\x,2*\y) node[draw,circle,scale=.6](p57){};
    \draw(6.5*\x,2*\y) node[draw,circle,scale=.6](p68){};
    \draw(7.5*\x,2*\y) node[draw,circle,scale=.6](p79){};
    \draw(8.5*\x,2*\y) node[draw,circle,scale=.6](p80){};
    \draw[opacity=.2](2*\x,3*\y) node[draw,circle,scale=.6](p14){};
    \draw(3*\x,3*\y) node[draw,fill,circle,scale=.6](p25){};
    \draw(4*\x,3*\y) node[draw,fill,circle,scale=.6](p36){};
    \draw(5*\x,3*\y) node[draw,circle,scale=.6](p47){};
    \draw(6*\x,3*\y) node[draw,circle,scale=.6](p58){};
    \draw(7*\x,3*\y) node[draw,circle,scale=.6](p69){};
    \draw(8*\x,3*\y) node[draw,circle,scale=.6](p70){};
    \draw(2.5*\x,4*\y) node[draw,circle,scale=.6](p15){};
    \draw(3.5*\x,4*\y) node[draw,circle,scale=.6](p26){};
    \draw(4.5*\x,4*\y) node[draw,circle,scale=.6](p37){};
    \draw(5.5*\x,4*\y) node[draw,circle,scale=.6](p48){};
    \draw(6.5*\x,4*\y) node[draw,fill,circle,scale=.6](p59){};
    \draw(7.5*\x,4*\y) node[draw,circle,scale=.6](p60){};
    \draw(3*\x,5*\y) node[draw,circle,scale=.6](p16){};
    \draw(4*\x,5*\y) node[draw,circle,scale=.6](p27){};
    \draw(5*\x,5*\y) node[draw,circle,scale=.6](p38){};
    \draw(6*\x,5*\y) node[draw,circle,scale=.6](p49){};
    \draw(7*\x,5*\y) node[draw,circle,scale=.6](p50){};
    \draw(3.5*\x,6*\y) node[draw,circle,scale=.6](p17){};
    \draw(4.5*\x,6*\y) node[draw,circle,scale=.6](p28){};
    \draw(5.5*\x,6*\y) node[draw,circle,scale=.6](p39){};
    \draw(6.5*\x,6*\y) node[draw,circle,scale=.6](p40){};
    \draw(4*\x,7*\y) node[draw,circle,scale=.6](p18){};
    \draw(5*\x,7*\y) node[draw,circle,scale=.6](p29){};
    \draw(6*\x,7*\y) node[draw,circle,scale=.6](p30){};
    \draw(4.5*\x,8*\y) node[draw,circle,scale=.6](p19){};
    \draw(5.5*\x,8*\y) node[draw,circle,scale=.6](p20){};
    \draw(5*\x,9*\y) node[draw,circle,scale=.6](p10){};
    \draw[opacity=.2](p12) -- (p13) -- (p14) -- (p15);
    \draw(p15) -- (p16) -- (p17) -- (p18) -- (p19) -- (p10);
    \draw[opacity=.2](p23) -- (p24) -- (p25);
    \draw(p25) -- (p26) -- (p27) -- (p28) -- (p29) -- (p20);
    \draw[opacity=.2](p34) -- (p35);
    \draw(p35)-- (p36) -- (p37) -- (p38) -- (p39) -- (p30);
    \draw(p45) -- (p46) -- (p47) -- (p48) -- (p49) -- (p40);
    \draw[opacity=.2](p56) -- (p57);
    \draw(p57)-- (p58) -- (p59) -- (p50);
    \draw(p67) -- (p68) -- (p69) -- (p60);
    \draw(p78) -- (p79) -- (p70);
    \draw[opacity=.2](p89) -- (p80);
    \draw(p67) -- (p68) -- (p69) -- (p60);
    \draw[opacity=.2](p23) -- (p13);
    \draw[opacity=.2](p34) -- (p24) -- (p14);
    \draw(p45) -- (p35) -- (p25) -- (p15);
    \draw[opacity=.2](p56) -- (p46);
    \draw(p46)-- (p36) -- (p26) -- (p16);
    \draw(p67) -- (p57) -- (p47) -- (p37) -- (p27) -- (p17);
    \draw(p78) -- (p68) -- (p58) -- (p48) -- (p38) -- (p28) -- (p18);
    \draw[opacity=.2](p89) -- (p79);
    \draw(p79) -- (p69) -- (p59) -- (p49) -- (p39) -- (p29) -- (p19);
    \draw(p90) -- (p80) -- (p70) -- (p60) -- (p50) -- (p40) -- (p30) -- (p20) -- (p10);
    \begin{pgfonlayer}{background}
      \fill[gray!50!white] plot[smooth cycle] coordinates{(4*\x,.8*\y) (3.3*\x,2.2*\y) (4.5*\x,2.2*\y) (5.3*\x,4.1*\y) (5.7*\x,4.1*\y) (6*\x,3.2*\y) (7*\x,3.2*\y) (7.3*\x,4.1*\y) (7.7*\x,4.1*\y) (9.2*\x,1*\y) (8.9*\x,.9*\y) (8.3*\x,2*\y) (8*\x,2.7*\y) (7.1*\x,.9*\y) (5.9*\x,.9*\y) (5*\x,2.7*\y) (4.7*\x,2*\y) };
    \end{pgfonlayer}
    \draw(.5*\x,0*\y) node(n1){$1$};
    \draw(1.5*\x,0*\y) node(n2){$2$};
    \draw(2.5*\x,0*\y) node(n3){$3$};
    \draw(3.5*\x,0*\y) node(n4){$4$};
    \draw(4.5*\x,0*\y) node(n5){$5$};
    \draw(5.5*\x,0*\y) node(n6){$6$};
    \draw(6.5*\x,0*\y) node(n7){$7$};
    \draw(7.5*\x,0*\y) node(n8){$8$};
    \draw(8.5*\x,0*\y) node(n9){$9$};
    \draw(9.5*\x,0*\y) node(n0){$10$};
    \draw[dashed,very thick](p25) -- (n2);
    \draw[dashed,very thick](p25) -- (n5);
    \draw[dashed,very thick](p36) -- (n3);
    \draw[dashed,very thick](p36) -- (n6);
    \draw[dashed,very thick](p59) -- (n5);
    \draw[dashed,very thick](p59) -- (n9);
    \draw(n2) to[bend right=70](n5);
    \draw(n3) to[bend right=70](n6);
    \draw(n5) to[bend right=50](n9);
  \end{tikzpicture}
  \caption{The parabolic root poset of $\mathfrak{S}_{10}$ with respect to $J=\{s_1,s_2,s_3,s_5,s_8\}$.  The shaded region is an order ideal, and the minimal elements of the complement are marked in black.  The corresponding $J$-nonnesting partition is $\bigl\{\{1\},\{2,5,9\},\{3,6\},\{4\},\{7\},\{8\},\{10\}\bigr\}$.}
  \label{fig:nonnesting_ideals}
\end{figure}

We now prove that $J$-nonnesting and $J$-noncrossing partitions are also in bijection.  See \Cref{fig:j_nonnesting_example} for an example.

\begin{theorem}\label{thm:j_noncrossings_nonnestings}
  For $n>0$ and $J\subseteq S$, there is a bijection $\NN^J_n\simeq\NC^J_n$.
\end{theorem}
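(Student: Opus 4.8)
The plan is to biject the two families through a common invariant. Given a set partition $\Pf$ of $[n]$, call $i\in[n]$ an \emph{opener} of $\Pf$ if it is not the largest element of its part and a \emph{closer} if it is not the smallest; the \emph{profile} $\pi_{\Pf}$ is the datum recording, for each $i$, whether $i$ is a left endpoint of a bump of $\Pf$, a right endpoint, both, or neither. First I would show that a $J$-noncrossing partition is determined by its profile, i.e.\ that $\Pf,\Pf'\in\NC^{J}_{n}$ with $\pi_{\Pf}=\pi_{\Pf'}$ must coincide. Arguing by induction over the closers in increasing order: if $\Pf$ and $\Pf'$ pair every closer below a closer $k$ with the same opener, then they have the same set of pending openers at $k$, and by \eqref{it:nc1} the partner of $k$ in either partition lies in a $J$-region distinct from that of $k$. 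If $\Pf$ pairs $k$ with $a$ while $\Pf'$ pairs $k$ with $a'\neq a$, say $a<a'$, then in $\Pf$ the bump $(a,k)$ together with the bump that eventually closes $a'$ forms a crossing pair, so \eqref{it:nc2} applied in $\Pf$ forces $a$ and $a'$ into the same $J$-region (using that $a'$ and $k$ are in different regions), whereas in $\Pf'$ the bump $(a',k)$ together with the bump that eventually closes $a$ forms a nesting pair, so \eqref{it:nc3} applied in $\Pf'$ forces $a$ and $a'$ into different $J$-regions --- a contradiction. Hence the partner of $k$ is forced, and $\Pf=\Pf'$.

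By \eqref{it:nn2} every $J$-nonnesting partition is an ordinary nonnesting partition, so it too is recovered from its profile, namely by the classical rule that pairs each closer with the \emph{oldest} pending opener. Thus $\Pf\mapsto\pi_{\Pf}$ is injective on each of $\NC^{J}_{n}$ and $\NN^{J}_{n}$, and it remains to prove that the two images --- the $J$-noncrossing and $J$-nonnesting profile sets --- are equal; granting this, the bijection $\NC^{J}_{n}\to\NN^{J}_{n}$ sends $\Pf$ to the unique $\Pf'\in\NN^{J}_{n}$ with $\pi_{\Pf'}=\pi_{\Pf}$, its inverse being built the same way via the forced reconstruction from the first paragraph.

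The equality of the two profile sets I would get by a ballot-type counting argument in each direction. For $\subseteq$: let $\Pf\in\NC^{J}_{n}$, let $\widehat{\Pf}$ be the oldest-pending reconstruction of $\pi_{\Pf}$, and suppose $\widehat{\Pf}$ had a bump $(p,k)$ with $p$ and $k$ in one $J$-region $R$; set $m=\min R$, so $m\le p<k$. Let $Q$ be the number of openers pending just before position $m$ (a quantity depending only on $\pi_{\Pf}$). Since $p\ge m$ is the oldest pending opener at $k$, each of those $Q$ openers is closed at some position in $[m,k-1]$, so $[m,k]$ contains at least $Q+1$ closers. But in $\Pf$ every closer in $[m,k]$ lies in $R$, so by \eqref{it:nc1} its partner is an opener lying to its left outside $R$, hence an opener $<m$, and every such opener is among the $Q$ pending just before $m$; thus $[m,k]$ contains at most $Q$ closers --- a contradiction. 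Therefore $\widehat{\Pf}$ satisfies \eqref{it:nn1}, and being nonnesting it lies in $\NN^{J}_{n}$. For $\supseteq$: let $\Pf\in\NN^{J}_{n}$. Being nonnesting, $\Pf$ equals the oldest-pending reconstruction of its own profile, and since it satisfies \eqref{it:nn1}, at every closer $k$ the oldest pending opener --- and hence some pending opener --- lies in a $J$-region below that of $k$. A short analysis of \eqref{it:nc1}--\eqref{it:nc3} then shows that the partner of $k$ forced by those three conditions exists and equals the smallest pending opener lying in the highest $J$-region (below that of $k$) still containing a pending opener; so the forced reconstruction of $\pi_{\Pf}$ never stalls, and one checks that it yields a partition in $\NC^{J}_{n}$ --- the constraints linking the bump created at $k$ to a bump created earlier are precisely the ones already enforced at that earlier closer --- whose profile is $\pi_{\Pf}$.

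I expect the delicate points to be the forced-matching lemma of the first paragraph and the explicit identification of the forced partner needed for $\supseteq$: both require pinning down exactly how \eqref{it:nc2} and \eqref{it:nc3} interact with the interval structure of the $J$-regions. Should this bookkeeping prove unwieldy, a fallback is a recursive bijection in the style of the proof of \Cref{thm:j_sortables_noncrossings}: from a $J$-nonnesting partition, peel off the part containing $1$, split the remaining indices into those in parts lying in the order filter that part generates and those outside it, regard each half as a smaller parabolic instance, convert by induction, and reassemble --- since every bump of the output lies within one of the three pieces, conditions \eqref{it:nc1}--\eqref{it:nc3} follow from the inductive hypotheses.
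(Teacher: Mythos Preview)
Your profile approach is genuinely different from the paper's proof, which builds the bijection recursively through the parabolic root poset: it splits an order ideal into the piece above $s_{k_1}$ (the first simple reflection outside $J$) and the rest, converts the rest to a $(J\cup\{s_{k_1}\})$-noncrossing partition of $\{k_1+1,\ldots,n\}$ by induction, and then reads off the remaining bumps from the ``supported'' columns of the first piece.  Your method has the advantage of giving an explicit one-pass matching rule on both sides; the paper's makes the order-ideal interpretation transparent.  Your fallback sketch is closer in spirit to the paper's argument, though the paper recurses on the $J$-regions rather than on the part containing~$1$.

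There is, however, a real gap in your $\supseteq$ direction.  When you write that ``at every closer $k$ the oldest pending opener \ldots\ lies in a $J$-region below that of $k$'', the pending set you control via \eqref{it:nn1} is that of the \emph{nonnesting} partition $\Pf$, not of the $J$-noncrossing reconstruction you are building; only the \emph{number} $Q_k$ of pending openers is profile-determined, not the set itself.  So \eqref{it:nn1} on $\Pf$ does not by itself show that your rule finds a partner outside $k$'s region.  The repair is the same ballot argument as in your $\subseteq$ direction, run the other way: if every opener pending at $k$ in the reconstruction lay in $k$'s region $R$ (with $m=\min R$), then each of the $C$ closers in $[m,k-1]$ would have been matched by your rule with an opener $<m$, while all $Q_m$ openers pending at $m$ must be closed before $k$, so $C=Q_m$; but in $\Pf$ the $C+1$ closers of $[m,k]$ are, by \eqref{it:nn1}, matched with $C+1$ distinct openers $<m$ pending at $m$, forcing $Q_m\ge Q_m+1$.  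With non-stalling in hand, the elided checks of \eqref{it:nc2} and \eqref{it:nc3} are indeed routine consequences of the highest-region-then-smallest rule.
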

\begin{proof}
  We begin with the construction of a bijection from $\NN^J_n$ to $\NC^J_n$ for the case of maximal parabolic quotients, \ie where $J=S\setminus\{s_{k}\}$ for $k\in[n]$. We label the transposition $(i,j)$ in $\Phi_{+}^{J}$ by the arc $(k+1-i,n+1-j+k)$, which yields the following labeling of $\Phi_{+}^{J}$ (under a suitable rotation):
  \begin{displaymath}\begin{array}{cccc}
    \bigl(k,(k+1)\bigr) & \cdots & \bigl(k,(n-1)\bigr) & \bigl(k,n\bigr)\\
    \vdots & \vdots & \vdots & \vdots\\
    \bigl(2,(k+1)\bigr) & \cdots & \bigl(2,(n-1)\bigr) & \bigl(2,n\bigr)\\
    \bigl(1,(k+1)\bigr) & \cdots & \bigl(1,(n-1)\bigr) & \bigl(1,n\bigr)\\
  \end{array}\end{displaymath}
  The $J$-nonnesting set partition corresponding to an order ideal in $\Phi_{+}^{J}$ is the one whose bumps are the labels of the minimal elements not in the order ideal. Since $B(J)=\bigl\{\{1,2,\ldots,k\},\{k+1,k+2,\ldots,n\}\bigr\}$, condition \eqref{it:nc3} ensures that every $J$-noncrossing partition is also $J$-nonnesting and vice versa.

  Now suppose that $J=S\setminus\{s_{k_{1}},s_{k_{2}},\ldots,s_{k_{r}}\}$, and let $I$ be an order ideal of $\Phi_{+}^{J}$.  We construct a noncrossing partition $\Pf\in\NC^J_n$ inductively starting from the partition with no parts.  First we break $I$ in two pieces, $A$ and $B$: $A$ contains all the transpositions in $I$ that lie above $s_{k_{1}}$ in $\Phi_{+}^{J}$, and $B$ contains all the other transpositions in $I$.  Then $B$ is an order ideal in $\Phi_{+}^{J\setminus\{s_{k_{1}}\}}$, and we can construct a $\bigl(J\setminus\{s_{k_{1}}\}\bigr)$-noncrossing set partition of $\{k_{1}+1,k_{1}+2,\ldots,n\}$ by induction.  Now we choose all those columns in piece $A$ that either lie outside the order filter generated by $s_{k_{2}},\ldots,s_{k_{r}}$ or that have an element of $I$ in piece $B$ directly below them. (We thus pick the columns of $A$ that are ``supported'' by $B$.)

  Let $l_{1},l_{2},\ldots,l_{r}$ denote the column labels from the inductive step of the part of $B$ that supports $A$.  Any bump starting in $\{1,2,\ldots,k_{1}\}$ can end either in $\{k_{1}+1,k_{1}+2,\ldots,k_{2}\}$ or in $\{l_{1},l_{2},\ldots,l_{r}\}$, in order not to cross any existing bumps.  We label the transpositions in the chosen part of $A$ as follows:
  \begin{displaymath}\begin{array}{cccccccc}
    \bigl(k_{1},(k_{1}+1)\bigr) & \bigl(k_{1},(k_{1}+2)\bigr) & \cdots & \bigl(k_{1},k_{2}\bigr) & \bigl(k,l_{1}\bigr) & \bigl(k,l_{2}\bigr) & \cdots & \bigl(k,l_{r}\bigr)\\
    \vdots & \vdots & \vdots & \vdots & \vdots & \vdots & \vdots & \vdots\\
    \bigl(2,(k_{1}+1)\bigr) & \bigl(2,(k_{1}+2)\bigr) & \cdots & \bigl(2,k_{2}\bigr) & \bigl(2,l_{1}\bigr) & \bigl(2,l_{2}\bigr) & \cdots & \bigl(2,l_{r}\bigr)\\
    \bigl(1,(k_{1}+1)\bigr) & \bigl(1,(k_{1}+2)\bigr) & \cdots & \bigl(1,k_{2}\bigr) & \bigl(1,l_{1}\bigr) & \bigl(1,l_{2}\bigr) & \cdots & \bigl(1,l_{r}\bigr)\\
  \end{array}\end{displaymath}
  The labels corresponding to the minimal transpositions that are not in $I$ within these chosen columns then yield the remaining bumps.  By construction, the resulting partition is $J$-noncrossing.

  The inverse map is constructed by temporarily forgetting about the bumps from the first $J$-region, and then using the smaller $J$-noncrossing partition to construct the $B$ piece of the order ideal inductively.  From there, we can again identify the ``supported'' columns in the $A$ piece, and the bumps starting in the first $J$-region then give the remaining elements of the order ideal.
\end{proof}

\begin{example}
  Consider the $J$-nonnesting set partition of $[10]$ shown at the top left of \Cref{fig:j_nonnesting_example} indicated by the dark gray region. The construction of the smaller parabolic noncrossing partitions is shown in the middle and right part of that figure, and the resulting $J$-noncrossing partition is shown at the bottom left.
\end{example}

\begin{figure}
  \centering
  \begin{tikzpicture}
    \draw(-1.15,.5) -- (11.6,.5) -- (11.6,6) -- (-1.15,6) -- (-1.15,.5);
    \draw(3.1,.5) -- (3.1,6);
    \draw(7.35,.5) -- (7.35,6);
    \draw(1,4) node{\begin{tikzpicture}\small
        \def\x{.4};
        \def\y{.4};
        \draw(1*\x,1*\y) -- (2*\x,1*\y) -- (2*\x,3*\y) -- (4*\x,3*\y) -- (4*\x,4*\y) -- (5*\x,4*\y) -- (5*\x,6*\y) -- (7*\x,6*\y) -- (7*\x,10*\y) -- (3*\x,10*\y) -- (3*\x,6*\y) -- (1*\x,6*\y) -- (1*\x,1*\y);
        \draw(1*\x,5*\y) -- (3*\x,5*\y) -- (3*\x,6*\y);
        \draw(3*\x,7*\y) -- (6*\x,7*\y) -- (6*\x,8*\y) -- (7*\x,8*\y);
        \draw[dotted](3*\x,6*\y) -- (5*\x,6*\y);
        \draw(5*\x,9*\y) node{Piece A};
        \draw(2.5*\x,4*\y) node{Piece B};
        \draw(6.5*\x,6.5*\y) node[scale=.5]{$(4,5)$};
        \draw(2.5*\x,5.5*\y) node{$\bullet$};
        \draw(5.5*\x,7.5*\y) node{$\bullet$};
        \draw(6.5*\x,8.5*\y) node{$\bullet$};
        \begin{pgfonlayer}{background}
          \draw[gray!50!white](2*\x,2*\y) -- (3*\x,2*\y) -- (3*\x,3*\y);
          \draw[gray!50!white](5*\x,5*\y) -- (6*\x,5*\y) -- (6*\x,6*\y);
          \draw[gray!50!white](7*\x,7*\y) -- (8*\x,7*\y) -- (8*\x,8*\y) -- (9*\x,8*\y) -- (9*\x,9*\y) -- (10*\x,9*\y) -- (10*\x,10*\y) -- (7*\x,10*\y);
          \draw[gray!50!white](3*\x,10*\y) -- (1*\x,10*\y) -- (1*\x,6*\y);
          \fill[gray!75!white](1*\x,1*\y) -- (2*\x,1*\y) -- (2*\x,3*\y) -- (4*\x,3*\y) -- (4*\x,4*\y) -- (5*\x,4*\y) -- (5*\x,6*\y) -- (7*\x,6*\y) -- (7*\x,8*\y) -- (6*\x,8*\y) -- (6*\x,7*\y) -- (3*\x,7*\y) -- (3*\x,5*\y) -- (1*\x,5*\y) -- (1*\x,1*\y);
          \fill[gray!25!white](1*\x,5*\y) -- (3*\x,5*\y) -- (3*\x,6*\y) -- (1*\x,6*\y) -- (1*\x,5*\y);
          \fill[gray!25!white](3*\x,7*\y) -- (6*\x,7*\y) -- (6*\x,8*\y) -- (7*\x,8*\y) -- (7*\x,10*\y) -- (3*\x,10*\y) -- (3*\x,7*\y);
        \end{pgfonlayer}
      \end{tikzpicture}};
    \draw(1,1) node{\begin{tikzpicture}\tiny
        \def\x{.4};
        \def\y{1};
        \draw(1*\x,1*\y) node[circle,fill,scale=.33](n1){};
        \draw(1*\x,.7*\y) node{$1$};
        \draw(2*\x,1*\y) node[circle,fill,scale=.33](n2){};
        \draw(2*\x,.7*\y) node{$2$};
        \draw(3*\x,1*\y) node[circle,fill,scale=.33](n3){};
        \draw(3*\x,.7*\y) node{$3$};
        \draw(4*\x,1*\y) node[circle,fill,scale=.33](n4){};
        \draw(4*\x,.7*\y) node{$4$};
        \draw(5*\x,1*\y) node[circle,fill,scale=.33](n5){};
        \draw(5*\x,.7*\y) node{$5$};
        \draw(6*\x,1*\y) node[circle,fill,scale=.33](n6){};
        \draw(6*\x,.7*\y) node{$6$};
        \draw(7*\x,1*\y) node[circle,fill,scale=.33](n7){};
        \draw(7*\x,.7*\y) node{$7$};
        \draw(8*\x,1*\y) node[circle,fill,scale=.33](n8){};
        \draw(8*\x,.7*\y) node{$8$};
        \draw(9*\x,1*\y) node[circle,fill,scale=.33](n9){};
        \draw(9*\x,.7*\y) node{$9$};
        \draw(10*\x,1*\y) node[circle,fill,scale=.33](n10){};
        \draw(10*\x,.7*\y) node{$10$};
        \draw(n2) .. controls (2.1*\x,.7*\y) and  (4.6*\x,.7*\y) ..  (4.7*\x,1*\y) .. controls (4.8*\x,1.3*\y) and (8.9*\x,1.3*\y) .. (n9);
        \draw(n3) .. controls (3.1*\x,.8*\y) and  (4.3*\x,.8*\y) ..  (4.4*\x,1*\y) .. controls (4.5*\x,1.4*\y) and (9.9*\x,1.4*\y) .. (n10);
        \draw(n6) .. controls (6.1*\x,.8*\y) and  (6.5*\x,.8*\y) ..  (6.5*\x,1*\y) .. controls (6.6*\x,1.2*\y) and (7.9*\x,1.2*\y) .. (n8);
        \begin{pgfonlayer}{background}
          \fill[gray!50!white](1*\x,1*\y) circle(.1);
          \fill[gray!50!white](4*\x,1*\y) circle(.1);
          \fill[gray!50!white](5*\x,1*\y) circle(.1);
          \fill[gray!50!white](6*\x,1*\y) circle(.1);
          \fill[gray!50!white](7*\x,1*\y) circle(.1);
          \fill[gray!50!white](8*\x,1*\y) circle(.1);
          \fill[gray!50!white](9*\x,1*\y) circle(.1);
          \fill[gray!50!white](10*\x,1*\y) circle(.1);
          \fill[gray!50!white](1*\x,.9*\y) -- (4*\x,.9*\y) -- (4*\x,1.1*\y) -- (1*\x,1.1*\y) -- (1*\x,.9*\y);
          \fill[gray!50!white](5*\x,.9*\y) -- (6*\x,.9*\y) -- (6*\x,1.1*\y) -- (5*\x,1.1*\y) -- (5*\x,.9*\y);
          \fill[gray!50!white](8*\x,.9*\y) -- (9*\x,.9*\y) -- (9*\x,1.1*\y) -- (8*\x,1.1*\y) -- (8*\x,.9*\y);
        \end{pgfonlayer}
      \end{tikzpicture}};
    \draw(5.25,4) node{\begin{tikzpicture}\small
        \def\x{.4};
        \def\y{.4};
        \draw(3*\x,6*\y) -- (7*\x,6*\y) -- (7*\x,10*\y) -- (3*\x,10*\y) -- (3*\x,6*\y);
        \draw(3*\x,7*\y) -- (6*\x,7*\y) -- (6*\x,8*\y) -- (7*\x,8*\y);
        \draw[dotted](3*\x,6*\y) -- (5*\x,6*\y);
        \draw(5*\x,9*\y) node{Piece A};
        \draw(6.5*\x,6.5*\y) node[scale=.5]{$(4,5)$};
        \draw(5.5*\x,7.5*\y) node{$\bullet$};
        \draw(6.5*\x,8.5*\y) node{$\bullet$};
        \begin{pgfonlayer}{background}
          \draw[gray!50!white](1*\x,1*\y) -- (2*\x,1*\y) -- (2*\x,2*\y) -- (3*\x,2*\y) -- (3*\x,3*\y) -- (4*\x,3*\y) -- (4*\x,4*\y) -- (5*\x,4*\y) -- (5*\x,5*\y) -- (6*\x,5*\y) -- (6*\x,6*\y);
          \draw[gray!50!white](7*\x,7*\y) -- (8*\x,7*\y) -- (8*\x,8*\y) -- (9*\x,8*\y) -- (9*\x,9*\y) -- (10*\x,9*\y) -- (10*\x,10*\y) -- (7*\x,10*\y);
          \draw[gray!50!white](3*\x,10*\y) -- (1*\x,10*\y) -- (1*\x,1*\y);
          \fill[gray!75!white](3*\x,6*\y) -- (7*\x,6*\y) -- (7*\x,8*\y) -- (6*\x,8*\y) -- (6*\x,7*\y) -- (3*\x,7*\y) -- (3*\x,6*\y);
          \fill[gray!25!white](3*\x,7*\y) -- (6*\x,7*\y) -- (6*\x,8*\y) -- (7*\x,8*\y) -- (7*\x,10*\y) -- (3*\x,10*\y) -- (3*\x,7*\y);
        \end{pgfonlayer}
      \end{tikzpicture}};
    \draw(5.25,1) node{\begin{tikzpicture}\tiny
        \def\x{.4};
        \def\y{1};
        \draw(1*\x,1*\y) node[circle,fill,scale=.33](n1){};
        \draw(1*\x,.7*\y) node{$1$};
        \draw(2*\x,1*\y) node[circle,fill,scale=.33](n2){};
        \draw(2*\x,.7*\y) node{$2$};
        \draw(3*\x,1*\y) node[circle,fill,scale=.33](n3){};
        \draw(3*\x,.7*\y) node{$3$};
        \draw(4*\x,1*\y) node[circle,fill,scale=.33](n4){};
        \draw(4*\x,.7*\y) node{$4$};
        \draw[gray!25!white](5*\x,1*\y) node[circle,fill,scale=.33](n5){};
        \draw[gray!25!white](5*\x,.7*\y) node{$5$};
        \draw[gray!25!white](6*\x,1*\y) node[circle,fill,scale=.33](n6){};
        \draw[gray!25!white](6*\x,.7*\y) node{$6$};
        \draw[gray!25!white](7*\x,1*\y) node[circle,fill,scale=.33](n7){};
        \draw[gray!25!white](7*\x,.7*\y) node{$7$};
        \draw[gray!25!white](8*\x,1*\y) node[circle,fill,scale=.33](n8){};
        \draw[gray!25!white](8*\x,.7*\y) node{$8$};
        \draw(9*\x,1*\y) node[circle,fill,scale=.33](n9){};
        \draw(9*\x,.7*\y) node{$9$};
        \draw(10*\x,1*\y) node[circle,fill,scale=.33](n10){};
        \draw(10*\x,.7*\y) node{$10$};
        \draw(n2) .. controls (2.1*\x,.7*\y) and  (4.6*\x,.7*\y) ..  (4.7*\x,1*\y) .. controls (4.8*\x,1.3*\y) and (8.9*\x,1.3*\y) .. (n9);
        \draw(n3) .. controls (3.1*\x,.8*\y) and  (4.3*\x,.8*\y) ..  (4.4*\x,1*\y) .. controls (4.5*\x,1.4*\y) and (9.9*\x,1.4*\y) .. (n10);
        \draw[gray!25!white](n6) .. controls (6.1*\x,.8*\y) and  (6.5*\x,.8*\y) ..  (6.5*\x,1*\y) .. controls (6.6*\x,1.2*\y) and (7.9*\x,1.2*\y) .. (n8);
        \begin{pgfonlayer}{background}
          \fill[gray!50!white](1*\x,1*\y) circle(.1);
          \fill[gray!50!white](4*\x,1*\y) circle(.1);
          \fill[gray!25!white](5*\x,1*\y) circle(.1);
          \fill[gray!25!white](6*\x,1*\y) circle(.1);
          \fill[gray!25!white](7*\x,1*\y) circle(.1);
          \fill[gray!25!white](8*\x,1*\y) circle(.1);
          \fill[gray!50!white](9*\x,1*\y) circle(.1);
          \fill[gray!50!white](10*\x,1*\y) circle(.1);
          \fill[gray!50!white](1*\x,.9*\y) -- (4*\x,.9*\y) -- (4*\x,1.1*\y) -- (1*\x,1.1*\y) -- (1*\x,.9*\y);
          \fill[gray!25!white](5*\x,.9*\y) -- (6*\x,.9*\y) -- (6*\x,1.1*\y) -- (5*\x,1.1*\y) -- (5*\x,.9*\y);
          \fill[gray!50!white](8*\x,.9*\y) -- (9*\x,.9*\y) -- (9*\x,1.1*\y) -- (8*\x,1.1*\y) -- (8*\x,.9*\y);
        \end{pgfonlayer}
        \begin{pgfonlayer}{middle}
          \fill[gray!25!white](8*\x,.9*\y) -- (8.5*\x,.9*\y) -- (8.5*\x,1.1*\y) -- (8*\x,1.1*\y) -- (8*\x,.9*\y);
        \end{pgfonlayer}
      \end{tikzpicture}};
    \draw(9.5,4) node{\begin{tikzpicture}\small
        \def\x{.4};
        \def\y{.4};
        \draw(1*\x,1*\y) -- (2*\x,1*\y) -- (2*\x,3*\y) -- (4*\x,3*\y) -- (4*\x,4*\y) -- (5*\x,4*\y) -- (5*\x,6*\y) -- (1*\x,6*\y) -- (1*\x,1*\y);
        \draw(1*\x,5*\y) -- (3*\x,5*\y) -- (3*\x,6*\y);
        \draw(2.5*\x,4*\y) node{Piece B};
        \draw(2.5*\x,5.5*\y) node{$\bullet$};
        \begin{pgfonlayer}{background}
          \draw[gray!50!white](2*\x,2*\y) -- (3*\x,2*\y) -- (3*\x,3*\y);
          \draw[gray!50!white](5*\x,5*\y) -- (6*\x,5*\y) -- (6*\x,6*\y) -- (7*\x,6*\y) -- (7*\y,7*\y) -- (8*\x,7*\y) -- (8*\x,8*\y) -- (9*\x,8*\y) -- (9*\x,9*\y) -- (10*\x,9*\y) -- (10*\x,10*\y) -- (1*\x,10*\y) -- (1*\x,6*\y);
          \fill[gray!75!white](1*\x,1*\y) -- (2*\x,1*\y) -- (2*\x,3*\y) -- (4*\x,3*\y) -- (4*\x,4*\y) -- (5*\x,4*\y) -- (5*\x,6*\y) -- (3*\x,6*\y) -- (3*\x,5*\y) -- (1*\x,5*\y) -- (1*\x,1*\y);
          \fill[gray!25!white](1*\x,5*\y) -- (3*\x,5*\y) -- (3*\x,6*\y) -- (1*\x,6*\y) -- (1*\x,5*\y);
        \end{pgfonlayer}
      \end{tikzpicture}};
    \draw(9.5,1) node{\begin{tikzpicture}\tiny
        \def\x{.4};
        \def\y{1};
        \draw[gray!25!white](1*\x,1*\y) node[circle,fill,scale=.33](n1){};
        \draw[gray!25!white](1*\x,.7*\y) node{$1$};
        \draw[gray!25!white](2*\x,1*\y) node[circle,fill,scale=.33](n2){};
        \draw[gray!25!white](2*\x,.7*\y) node{$2$};
        \draw[gray!25!white](3*\x,1*\y) node[circle,fill,scale=.33](n3){};
        \draw[gray!25!white](3*\x,.7*\y) node{$3$};
        \draw[gray!25!white](4*\x,1*\y) node[circle,fill,scale=.33](n4){};
        \draw[gray!25!white](4*\x,.7*\y) node{$4$};
        \draw(5*\x,1*\y) node[circle,fill,scale=.33](n5){};
        \draw(5*\x,.7*\y) node{$5$};
        \draw(6*\x,1*\y) node[circle,fill,scale=.33](n6){};
        \draw(6*\x,.7*\y) node{$6$};
        \draw(7*\x,1*\y) node[circle,fill,scale=.33](n7){};
        \draw(7*\x,.7*\y) node{$7$};
        \draw(8*\x,1*\y) node[circle,fill,scale=.33](n8){};
        \draw(8*\x,.7*\y) node{$8$};
        \draw(9*\x,1*\y) node[circle,fill,scale=.33](n9){};
        \draw(9*\x,.7*\y) node{$9$};
        \draw(10*\x,1*\y) node[circle,fill,scale=.33](n10){};
        \draw(10*\x,.7*\y) node{$10$};
        \draw[gray!25!white](n2) .. controls (2.1*\x,.7*\y) and  (4.6*\x,.7*\y) ..  (4.7*\x,1*\y) .. controls (4.8*\x,1.3*\y) and (8.9*\x,1.3*\y) .. (n9);
        \draw[gray!25!white](n3) .. controls (3.1*\x,.8*\y) and  (4.3*\x,.8*\y) ..  (4.4*\x,1*\y) .. controls (4.5*\x,1.4*\y) and (9.9*\x,1.4*\y) .. (n10);
        \draw(n6) .. controls (6.1*\x,.8*\y) and  (6.5*\x,.8*\y) ..  (6.5*\x,1*\y) .. controls (6.6*\x,1.2*\y) and (7.9*\x,1.2*\y) .. (n8);
        \begin{pgfonlayer}{background}
          \fill[gray!25!white](1*\x,1*\y) circle(.1);
          \fill[gray!25!white](4*\x,1*\y) circle(.1);
          \fill[gray!50!white](5*\x,1*\y) circle(.1);
          \fill[gray!50!white](6*\x,1*\y) circle(.1);
          \fill[gray!50!white](7*\x,1*\y) circle(.1);
          \fill[gray!50!white](8*\x,1*\y) circle(.1);
          \fill[gray!50!white](9*\x,1*\y) circle(.1);
          \fill[gray!50!white](10*\x,1*\y) circle(.1);
          \fill[gray!25!white](1*\x,.9*\y) -- (4*\x,.9*\y) -- (4*\x,1.1*\y) -- (1*\x,1.1*\y) -- (1*\x,.9*\y);
          \fill[gray!50!white](5*\x,.9*\y) -- (6*\x,.9*\y) -- (6*\x,1.1*\y) -- (5*\x,1.1*\y) -- (5*\x,.9*\y);
          \fill[gray!50!white](8*\x,.9*\y) -- (9*\x,.9*\y) -- (9*\x,1.1*\y) -- (8*\x,1.1*\y) -- (8*\x,.9*\y);
        \end{pgfonlayer}
      \end{tikzpicture}};
  \end{tikzpicture}
  \caption{Figure illustrating how to combine pieces A and B.}
  \label{fig:j_nonnesting_example}
\end{figure}

\begin{proof}[Proof of \Cref{thm:parabolic_bijections}]
    This follows from \Cref{thm:j_sortables_noncrossings,thm:j_noncrossings_nonnestings}.
\end{proof}

Recall that a \defn{Ferrers shape} is a sequence $\lambda=(\lambda_1,\lambda_2,\ldots,\lambda_k)$ of positive integers with $\lambda_1\geq\lambda_2\geq\cdots\geq\lambda_k$.  Another Ferrers shape $(\lambda'_1,\lambda'_2,\ldots,\lambda'_k)$ \defn{fits} inside $\lambda$ if $\lambda'_i\leq\lambda_i$ for all $i\in[k]$.

The parabolic root poset---when rotated by 45 degrees counterclockwise---can be viewed as a \emph{bounding} Ferrers shape 
\begin{displaymath}
  \lambda_n^J := (j_r^{n-j_r},\ldots,j_2^{j_3-j_2},j_1^{j_2-j_1})
\end{displaymath} 
and the complements of the order ideals in this poset correspond precisely to the Ferrers shapes that fit inside $\lambda_n^J$.  In our running example we obtain $\lambda_{10}^{\{s_1,s_2,s_3,s_5,s_8\}}=(9,7,7,6,4,4)$ as can quickly be verified in \Cref{fig:nonnesting_ideals}.  The enumeration of Ferrers shapes fitting into $\lambda_n^J$ now follows from the next result due to G.~Kreweras, and allows for computing the parabolic Catalan numbers.

\begin{theorem}[\cite{kreweras65sur}*{Section~2.3.7}]\label{}
  If $\lambda=(\lambda_1,\lambda_2,\ldots,\lambda_k)$ is a Ferrers shape, then the number of Ferrers shapes that fits inside $\lambda$ is given by the determinant of the $k\times k$-matrix whose entry in row $i$ and column $j$ is $\binom{\lambda_j+1}{j-i+1}$.
\end{theorem}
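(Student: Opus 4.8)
The plan is to deduce this from the Lindstr\"om--Gessel--Viennot (LGV) lemma for families of non-intersecting lattice paths. First I would restate the quantity to be counted: extending each sub-shape of $\lambda=(\lambda_1,\dots,\lambda_k)$ by zeros, the Ferrers shapes fitting inside $\lambda$ are in bijection with the weakly decreasing integer sequences $(\mu_1,\dots,\mu_k)$ satisfying $0\le\mu_i\le\lambda_i$ for every $i$ (the all-zero sequence being the empty shape). So it suffices to show that the number of such sequences equals $\det\bigl[\binom{\lambda_j+1}{j-i+1}\bigr]_{1\le i,j\le k}$, where we use the convention $\binom{a}{b}=0$ whenever $b<0$ or $b>a$.

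Next I would set up the lattice model. Work in $\mathbb{Z}^2$ with unit steps North $(0,1)$ and East $(1,0)$, and put $A_i:=(i,-i)$ and $B_j:=(j+1,\lambda_j-j)$. A monotone path from $A_i$ to $B_j$ consists of $j-i+1$ East steps and $\lambda_j-j+i$ North steps, so there are exactly $\binom{\lambda_j+1}{j-i+1}$ of them --- and this binomial is $0$ precisely when one of the two step counts is negative, i.e.\ when no such path exists. The crucial combinatorial identification is then: the $k$-tuples $(P_1,\dots,P_k)$ of pairwise vertex-disjoint monotone paths with $P_i\colon A_i\to B_i$ correspond bijectively to the sequences $(\mu_1,\dots,\mu_k)$ above. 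Indeed $P_i$ contains a single East step, taken at some height $h_i$ with $-i\le h_i\le\lambda_i-i$; setting $\mu_i:=h_i+i$ gives $0\le\mu_i\le\lambda_i$, and since $P_{i-1}$ and $P_i$ can only meet in the common column $x=i$, inspecting that column shows they are vertex-disjoint if and only if $h_i<h_{i-1}$, that is, $\mu_i\le\mu_{i-1}$ (non-consecutive $P_a,P_b$ occupy disjoint columns).

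Finally I would apply the LGV lemma, which gives $\det\bigl[\binom{\lambda_j+1}{j-i+1}\bigr]=\sum_\sigma \mathrm{sgn}(\sigma)\,N(\sigma)$, where $N(\sigma)$ is the number of vertex-disjoint families $(P_1,\dots,P_k)$ with $P_i\colon A_i\to B_{\sigma(i)}$. The sources $A_i$ all lie on the anti-diagonal $x+y=0$ and appear in northwest-to-southeast order as $i$ grows, and the sinks $B_j$ likewise form a strictly northwest-to-southeast sequence, since $(\lambda_j-j)-(\lambda_{j+1}-(j+1))=\lambda_j-\lambda_{j+1}+1\ge 1$. By the standard non-crossing argument for monotone lattice paths, no vertex-disjoint family exists unless $\sigma$ is the identity, whose sign is $+1$. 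Combining this with the bijection of the previous paragraph shows that the number of Ferrers shapes fitting inside $\lambda$ equals $N(\mathrm{id})=\det\bigl[\binom{\lambda_j+1}{j-i+1}\bigr]$.

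The step I expect to be the main obstacle is pinning down the coordinates of $A_i$ and $B_j$ so that the path counts come out as $\binom{\lambda_j+1}{j-i+1}$ exactly (a reindexing slip would produce a superficially different though equivalent matrix), together with the two verifications in the lattice model: matching vertex-disjointness of consecutive paths to the condition $\mu_i\le\mu_{i-1}$, and confirming that the endpoint configuration admits no non-identity permutation. All of this is routine once the picture is drawn, and the degenerate cases --- some $B_j$ lying weakly below or strictly left of $A_i$ --- are automatically consistent, being precisely the cases where the corresponding binomial coefficient vanishes.
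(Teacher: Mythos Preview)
Your proof is correct. The paper does not actually supply a proof of this theorem; it merely quotes the result with a citation to Kreweras' 1965 paper, so there is no in-paper argument to compare against.

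For context, your LGV argument is the standard modern proof of this determinantal formula, and you have set up the coordinates cleanly: the path from $A_i=(i,-i)$ to $B_j=(j+1,\lambda_j-j)$ really does have $\binom{\lambda_j+1}{j-i+1}$ realizations, consecutive paths share only column $x=i$ so that vertex-disjointness there is exactly $\mu_i\le\mu_{i-1}$, and the sources and sinks are each strictly northwest-to-southeast so only the identity permutation survives. Kreweras' original 1965 argument predates the Lindstr\"om and Gessel--Viennot papers and proceeds instead by a direct recursion on the shape, so your route is genuinely different from (and, to most readers today, more transparent than) the cited source.
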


\section{Generalization to Coxeter Groups}
  \label{sec:generalizations}
In recent years, $231$-avoiding permutations, noncrossing set partitions, and nonnesting set partitions have each been generalized from the symmetric group to finite Coxeter groups---see \cite{reading07sortable}, \cites{bessis03dual,brady08non}, and \cite{reiner97non}*{Remark~2}, respectively.  These generalizations allow for further parametrization by a Coxeter element (a product of the simple reflections in some order).  In this section, we describe a generalization of our parabolic versions of these objects in a similar fashion.

\subsection{Coxeter Groups}
  \label{sec:coxeter_groups}
We first recall some background on Coxeter groups.  For more details see \cites{bjorner05combinatorics,humphreys90reflection}.  A \defn{Coxeter system} is a pair $(W,S)$, where $W$ is a group and $S=\{s_{1},s_{2},\ldots,s_{n}\}\subseteq W$ is a generating set such that $W$ admits the presentation
\begin{equation}\label{eq:coxeter_presentation}
  W = \bigl\langle s_{1},s_{2},\ldots,s_{n}\mid (s_{i}s_{j})^{m_{i,j}}=\id\;\text{for}\;i,j\in[n]\bigr\rangle,
\end{equation}
where $\id$ denotes the identity of $W$.  The parameters $m_{i,j}$ are positive integers or the formal symbol $\infty$, where $m_{i,j}=1$ if and only if $i=j$.  If $m_{i,j}=\infty$, then there is no relation between the generators $s_{i}$ and $s_{j}$.  In this situation we call $W$ a \defn{Coxeter group} and the cardinality $n=\lvert S\rvert$ the \defn{rank} of $W$.  For geometric reasons we call the elements of $S$ the \defn{simple reflections}, and define the set of all \defn{reflections} by $T=\{wsw^{-1}\mid w\in W,s\in S\}$.

Since $S$ generates $W$, every $w\in W$ can be written as a product of the elements in $S$.  A \defn{reduced expression} for $w$ is such a product of minimal length, and this length is called the \defn{Coxeter length} of $w$; denoted by $\ell_{S}(w)$.  The \defn{(right) weak order} on $W$ is the partial order $\leq_{S}$ defined by $u\leq_{S}v$ if and only if $\ell_{S}(v)=\ell_{S}(u)+\ell_{S}(u^{-1}v)$.  We write $\Weak(W)$ for the partially ordered set $(W,\leq_{S})$.

\begin{remark}\label{rem:left_right}
  In \Cref{sec:weak_order_symmetric_group} we defined a left weak order on the symmetric group $\mathfrak{S}_{n}$, which may be generalized to Coxeter groups via the condition $\ell_{S}(v)=\ell_{S}(u)+\ell_{S}(vu^{-1})$.  The map $w\mapsto w^{-1}$ is a poset isomorphism from left to right weak order, so that the results from \Cref{sec:tamari_pq} could be phrased equally well in terms of right weak order.
\end{remark}

In general, $\Weak(W)$ is a meet-semilattice---if $W$ is finite, then $\Weak(W)$ has a unique longest element $\wo$, which implies that $\Weak(W)$ is in fact a lattice~\cite{bjorner05combinatorics}*{Theorem~3.2.1}.

Any $J\subseteq S$ naturally generates a (parabolic) subgroup $W_{J}$ of $W$, and the set of minimal length representatives of the right cosets of $W$ by $W_{J}$ forms the \defn{parabolic quotient} $W^{J}$ of $W$ with respect to $J$.  Proposition~2.4.4 in \cite{bjorner05combinatorics} implies that any $w\in W$ can be factorized uniquely as $w=w^{J}\cdot w_{J}$, where $w^{J}\in W^{J}$ and $w_{J}\in W_{J}$.  The weak order on $W$ gives a partial order on $W^{J}$, and $\Weak(W^{J})$ is isomorphic to the weak order ideal $[\id,\wo^{J}]$ when $W$ is finite~\cite{bjorner88generalized}*{Theorem~4.1}.

Comparing the presentations \eqref{eq:symmetric_presentation} and \eqref{eq:coxeter_presentation}, we see that the symmetric group with the generating set of all adjacent transpositions forms a Coxeter system, and the reflections are all conjugates of the adjacent transpositions.  We use this correspondence to generalize the notion of an inversion from the symmetric group to all Coxeter groups.  A \defn{(left) inversion} of $W$ is a reflection $t\in T$ such that $\ell_{S}(tw)<\ell_{S}(w)$.  The set of all inversions of $w$ is denoted by $\inv(w)$.  Analogously to the symmetric group, we can give an equivalent definition of the weak order by setting $u\leq_{S}v$ if and only if $\inv(u)\subseteq\inv(v)$~\cite{bjorner05combinatorics}*{Proposition~3.1.3}.

We want to emphasize a special subset of the inversions.  A \defn{cover reflection} of $w$ is an inversion $t\in\inv(w)$ such that there exists some $s\in S$ with $tw=ws$.  The name comes from the fact that multiplying some element by a simple reflection produces a cover relation in the weak order, and the cover reflection is then the conjugate of this simple reflection by the larger element in this cover.  The set of cover reflections of $w$ is denoted by $\cov(w)$.

Now fix a reduced expression $\mathbf{w}=a_{1}a_{2}\cdots a_{k}$.  The \defn{inversion sequence} of $\mathbf{w}$ is the sequence $r_{1},r_{2},\ldots,r_{k}$, where $r_{i}=a_{1}a_{2}\cdots a_{i-1}a_{i}a_{i-1}\cdots a_{2}a_{1}$.  It is clear by construction that $\inv(w)=\{r_{1},r_{2},\ldots,r_{k}\}$.  Observe, however, that the inversion sequence equips $\inv(w)$ with a linear order; the \defn{inversion order} $r_{1}<r_{2}<\cdots<r_{k}$.  This order will be denoted by $\Inv{w}$.

Since the elements of $T$ geometrically act as reflections on a Euclidean vector space, we can associate two normal vectors to the corresponding reflecting hyperplane.  The collection of all these normal vectors is a \defn{root system} of $W$, and it can be partitioned into \defn{positive} and \defn{negative} roots.  It follows that there is a bijection from $T$ to the set $\Phi^{+}$ of all positive roots.  Given $\alpha\in\Phi^{+}$, let $t_{\alpha}\in T$ be the corresponding reflection.  It follows from \cite{dyer19on}*{Lemma~4.1(iv)} that whenever we have a reflection $t_{a\alpha+b\beta}\in\inv(w)$ for some $\alpha,\beta\in\Phi^{+}$ and some positive integers $a,b$, then at least one of $t_{\alpha}$ and $t_{\beta}$ are in $\inv(w)$ as well.  We refer the interested reader to \cite{humphreys90reflection} for more background on the geometric realization of Coxeter groups.

Finally, we recall the existence of some special, well-behaved reduced expressions for any element of $W$.  A \defn{Coxeter element} of $(W,S)$ is an element that has a reduced expression which is a permutation of all simple reflections.  Fix such a Coxeter element $c\in W$.  Clearly, any reduced expression of $w\in W$ appears as a subword of the half-infinite word $c^{\infty}$ (which is the infinite concatenation of a fixed reduced expression for $c$).  The \defn{$c$-sorting word} of $w$ is the reduced expression for $w$ which appears leftmost in $c^{\infty}$, and will be denoted by $\mathbf{w(c)}$.

\subsection{Aligned Elements for Parabolic Quotients}
  \label{sec:aligned_elements}
In \cite{reading07clusters}*{Section~4}, N.~Reading defined a notion of $c$-alignment for the elements of a finite Coxeter group $W$ with respect to some Coxeter element $c\in W$.  More precisely, an element $w$ of $W$ is \defn{$c$-aligned} if whenever we have $t_{\alpha}<t_{a\alpha+b\beta}<t_{\beta}$ in the inversion order $\Inv{\wo(c)}$, where $\alpha,\beta\in\Phi^{+}$ and $a,b$ are positive integers, then $t_{a\alpha+b\beta}\in\inv(w)$ implies $t_{\alpha}\in\inv(w)$.  For $W=\mathfrak{S}_{n}$ and $c=s_{n-1}\cdots s_{2}s_{1}$ the linear Coxeter element, the $c$-aligned elements are precisely the $231$-avoiding permutations.

We now propose a definition of $c$-aligned elements for parabolic quotients.

\begin{definition}\label{def:parabolic_alignment}
  Let $(W,S)$ be a finite Coxeter system, let $J\subseteq S$, and let $c\in W$ be a Coxeter element.  An element $w\in W^{J}$ is \defn{$(W^{J},c)$-aligned} if, whenever we have $t_{\alpha}<t_{a\alpha+b\beta}<t_{\beta}$ in $\Inv{\wo^{J}(c)}$, where $\alpha,\beta\in\Phi^{+}$ and $a,b$ are positive integers, then $t_{a\alpha+b\beta}\in\cov(w)$ implies $t_{\alpha}\in\inv(w)$.
\end{definition}

We denote the set of all $(W^{J},c)$-aligned elements of $W$ by $\Align(W^{J},c)$.

There is a subtlety in \Cref{def:parabolic_alignment}---we only require the root $t_{a\alpha+b\beta}$ to correspond to a cover reflection of $w$, rather than to an arbitrary inversion.  It was shown in \cite{reading06cambrian}*{Lemma~5.5} and follows from \cite{reading07clusters}*{Lemmas~4.9 and~4.11} that our parabolic aligned condition for $J=\emptyset$ is indeed equivalent to the original aligned condition for Coxeter groups of type $A, B,$ and $D$.  This equivalence is trivial for the dihedral groups, and it was checked by computer for the groups $H_{3},H_{4}$, and $F_{4}$.  The remaining exceptional groups $E_{6},E_{7}$, and $E_{8}$ have not been checked by computer.  See also \cite{williams13cataland}*{Remark~5.1.8}.

The following lemma states that our parabolic pattern avoidance condition from \Cref{def:parabolic_231} is equivalent to \Cref{def:parabolic_alignment} in the case of the symmetric group and the \defn{linear} Coxeter element $c=s_{1}s_{2}\cdots s_{n-1}$, where $s_{i}=(i,i+1)$.

\begin{lemma}\label{lem:type_a_alignment}
  Let $W=\mathfrak{S}_{n}$, $c=s_{1}s_{2}\cdots s_{n-1}$, and choose $J \subseteq S$.  An element $w\in\mathfrak{S}_{n}^{J}$ is $(W^{J},c)$-aligned if and only if $w^{-1}$ is ($J$,$231$)-avoiding.
\end{lemma}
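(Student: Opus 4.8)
The plan is to rewrite both sides of the equivalence as combinatorial conditions on the inversion set of $w$ — equivalently, on the one-line notation of $w^{-1}$ — and see that they agree.

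First I would unwind \Cref{def:parabolic_alignment} in type $A$. In $A_{n-1}$ the positive roots are the $e_i-e_j$ with $i<j$, and the only way a positive root can be written as $a\alpha+b\beta$ with $\alpha,\beta\in\Phi^{+}$ and $a,b$ positive integers is $a=b=1$ with $\{\alpha,\beta\}=\{e_i-e_j,\,e_j-e_k\}$ and $a\alpha+b\beta=e_i-e_k$ for some $i<j<k$. Since $\Inv{\wo^{J}(c)}$, being the inversion order of a reduced word, is a reflection order (the sum root of a triangle of roots lies strictly between the two summands), whenever $t_{(i,j)},t_{(i,k)},t_{(j,k)}$ all lie in $\inv(\wo^{J})$ the middle of the three in $\Inv{\wo^{J}(c)}$ is $t_{(i,k)}$; moreover $t_{(i,k)}\in\cov(w)$ already forces $t_{(i,k)}\in\inv(\wo^{J})$, since $\cov(w)\subseteq\inv(w)\subseteq\inv(\wo^{J})$. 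Thus $w\in\mathfrak{S}_{n}^{J}$ is $(W^{J},c)$-aligned if and only if: for every $i<j<k$ with $t_{(i,j)},t_{(i,k)},t_{(j,k)}\in\inv(\wo^{J})$, the condition $t_{(i,k)}\in\cov(w)$ forces whichever of $t_{(i,j)},t_{(j,k)}$ comes first in $\Inv{\wo^{J}(c)}$ to lie in $\inv(w)$.

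The technical heart is to pin down $\Inv{\wo^{J}(c)}$ for the linear element $c=s_1s_2\cdots s_{n-1}$. Starting from the explicit shape of $\wo^{J}$ (obtained from $\wo$ by sorting each $J$-region increasingly, \Cref{lem:increasing_regions}), I would compute its $c$-sorting word $\wo^{J}(c)$ and read off its inversion sequence. The two facts to extract are: (i) $t_{(i,j)},t_{(i,k)},t_{(j,k)}$ all belong to $\inv(\wo^{J})$ exactly when $i,j,k$ lie in three distinct $J$-regions; and (ii) for every such triple one has $t_{(i,j)}<t_{(i,k)}<t_{(j,k)}$ in $\Inv{\wo^{J}(c)}$, so the earlier of $t_{(i,j)},t_{(j,k)}$ is always $t_{(i,j)}$. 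This is the delicate step: unlike the case $J=\emptyset$, where $\Inv{\wo(c)}$ is just the lexicographic order on transpositions (cf.\ \Cref{rem:foreshadowing}), for general $J$ the order $\Inv{\wo^{J}(c)}$ is a genuinely twisted variant, and the only thing one needs from it is the orientation of each relevant triangle — which should drop out of a direct, if somewhat tedious, description of $\wo^{J}(c)$.

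With (i) and (ii) in hand, the aligned condition reads: for all $i<j<k$ in distinct $J$-regions, $t_{(i,k)}\in\cov(w)\Rightarrow t_{(i,j)}\in\inv(w)$. Finally I would transport this across the isomorphism $w\mapsto w^{-1}$ of \Cref{rem:left_right}: viewing inversions as transpositions $(a,b)$ with $a<b$, one has $\inv(w)=\{(p,q):p<q,\ (w^{-1})_p>(w^{-1})_q\}$, the classical inversion set of $w^{-1}$, while $\cov(w)$ consists exactly of the pairs $(i,k)$ with $(w^{-1})_i=(w^{-1})_k+1$, that is, $\cov(w)=\des(w^{-1})$. Substituting, $w$ is $(W^{J},c)$-aligned if and only if $\inv(w^{-1})$ is $J$-compressed in the sense of \Cref{def:parabolic_compressed}, which by \Cref{lem:j_compressed_avoiding} is precisely the statement that $w^{-1}$ is $(J,231)$-avoiding. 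The single real obstacle is step (ii) — getting the correct description of $\Inv{\wo^{J}(c)}$ and the orientation of its triangles; steps 1 and 3 are bookkeeping.
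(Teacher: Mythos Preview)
Your proposal is correct and follows essentially the same route as the paper's proof: identify $\cov(w)$ with $\des(w^{-1})$ and $\inv(w)$ (in the Section~6 sense) with $\inv(w^{-1})$ (in the Section~2 sense), so that \Cref{def:parabolic_alignment} becomes \Cref{def:parabolic_compressed} for $w^{-1}$, and then invoke \Cref{lem:j_compressed_avoiding}. The paper's proof is two sentences and simply asserts that the two definitions agree after taking inverses; you have unpacked what that assertion actually requires, namely your facts (i) and (ii) about $\Inv{\wo^{J}(c)}$. Neither you nor the paper actually carries out the computation of (ii), so in that sense you are being more explicit about a step the paper leaves implicit rather than supplying a genuinely different argument.
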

\begin{proof}
  By definition, cover reflections of $w\in\mathfrak{S}_{n}$ correspond to descents of $w^{-1}$ so that \Cref{def:parabolic_alignment} agrees with \Cref{def:parabolic_compressed} after taking inverses.  \Cref{lem:j_compressed_avoiding} then implies that $w\in\mathfrak{S}_{n}^{J}$ is $(W^{J},c)$-aligned if and only if $w^{-1}$ is ($J$,$231$)-avoiding.
\end{proof}

It is an intriguing question whether the statement of \Cref{thm:parabolic_tamari_lattice} survives this generalization, and computer experiments have led us to formulate the following conjecture.

\begin{conjecture}\label{conj:parabolic_cambrian_lattice}
  For any finite Coxeter system $(W,S)$, any $J\subseteq S$, and any Coxeter element $c\in W$, the poset $\Weak\bigl(\Align(W^{J},c)\bigr)$ is a lattice.  Moreover, it is a lattice quotient of $\Weak(W^{J})$.
\end{conjecture}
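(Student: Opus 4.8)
The natural plan is to transport the argument of \Cref{sec:parabolic_tamari_quotient} that proves \Cref{thm:parabolic_tamari_lattice} from $\mathfrak{S}_n^J$ to a general pair $(W^J,c)$, using the $(W^J,c)$-aligned condition of \Cref{def:parabolic_alignment} in place of $(J,231)$-avoidance, the inversion order $\Inv{\wo^{J}(c)}$ in place of the lexicographic order on transpositions, and the root-theoretic forcing inequality $t_{a\alpha+b\beta}\in\inv(v)\Rightarrow t_{\alpha}\in\inv(v)\text{ or }t_{\beta}\in\inv(v)$ from \cite{dyer19on}*{Lemma~4.1(iv)} in place of the one-line/$J$-region combinatorics. (For $J=\emptyset$ this is exactly Reading's Cambrian lattice \cite{reading06cambrian}, so the template is known to succeed there; the issue is carrying the parabolic bookkeeping through it uniformly.) The first and central step is the analogue of \Cref{lem:down_projection}: for every $w\in W^J$, the set of $(W^J,c)$-aligned elements below $w$ in weak order has a greatest element, which we denote $\Pi_{\downarrow}^{J,c}(w)$. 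One proceeds by induction on $\ell_S(w)=\lvert\inv(w)\rvert$; if $w$ is not aligned, pick a cover reflection $t_{a\alpha+b\beta}\in\cov(w)$ with $t_\alpha<t_{a\alpha+b\beta}<t_\beta$ in $\Inv{\wo^{J}(c)}$ and $t_\alpha\notin\inv(w)$, let $u\lessdot_S w$ be the lower cover with $\inv(w)\setminus\inv(u)=\{t_{a\alpha+b\beta}\}$, and reduce to showing that every aligned $v$ with $\inv(v)\subseteq\inv(w)$ in fact omits $t_{a\alpha+b\beta}$, so that $\Pi_{\downarrow}^{J,c}(w)=\Pi_{\downarrow}^{J,c}(u)$ by the inductive hypothesis.

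Granting the down-projection lemma, the lattice assertion follows exactly as in \Cref{prop:j_231_avoiding_lattice}: $\wo^J$ is trivially $(W^J,c)$-aligned, so $\Weak\bigl(\Align(W^J,c)\bigr)$ is a finite meet-semilattice with greatest element $\wo^J$ and hence a lattice. For the quotient assertion, introduce the dual ``co-$(W^J,c)$-aligned'' condition -- the analogue of $(J,132)$-avoidance, obtained by exchanging the roles of $\alpha$ and $\beta$ -- prove the dual of \Cref{lem:down_projection} to obtain an up-projection $\Pi_{\uparrow}^{J,c}$ onto the co-aligned elements, and then transcribe \Cref{lem:convex_fibers,lem:projections_order_preserving,lem:fiber_moves,lem:up_down_moves}: the fibers of $\Pi_{\downarrow}^{J,c}$ are order-convex, both projections are order-preserving, and on each cover $u\lessdot_S v$ the existence of a forcing triple at the reflection distinguishing $v$ from $u$ is equivalent to $\Pi_{\downarrow}^{J,c}(u)=\Pi_{\downarrow}^{J,c}(v)$ and to $\Pi_{\uparrow}^{J,c}(u)=\Pi_{\uparrow}^{J,c}(v)$; consequently each fiber equals the interval $\bigl[\Pi_{\downarrow}^{J,c}(w),\Pi_{\uparrow}^{J,c}(w)\bigr]$. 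These are precisely the hypotheses making the induced partition a lattice congruence of $\Weak(W^J)\cong[\id,\wo^J]$, with quotient $\Weak\bigl(\Align(W^J,c)\bigr)$, exactly as in \Cref{prop:congruence}.

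The hard part -- and the step I expect to be genuinely difficult -- is the core claim inside the down-projection lemma, that is, the uniform replacement of the five-case analysis (i)--(v) of \Cref{lem:down_projection}, which in type $A$ reasoned directly with one-line notation and the positions of $J$-regions. In a general finite Coxeter group one must instead argue with the root system, the convexity (biclosedness) of the inversion set $\inv(v)$, the order $\Inv{\wo^{J}(c)}$, and Dyer's inequality. I expect the delicate case (v) -- where the reflections playing the roles of the indices $d$ and $e$ (with $v_d=v_k+1$ and $v_i=v_e+1$) straddle the forcing triple on opposite sides, handled there by a nested induction on $v_{e'}-v_{d'}$ -- to require an auxiliary induction on a ``height'' inside the rank-two parabolic subgroup generated by $t_\alpha$ and $t_\beta$, ultimately reducing to the dihedral case $I_2(m)$, where the aligned condition forces an entire chain of intermediate reflections into $\inv(v)$. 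Two supporting facts also need root-theoretic proofs along the way: the analogue of \Cref{lem:j_compressed_avoiding}, that non-alignment can always be witnessed by a cover reflection rather than an arbitrary inversion (take a violating inversion of minimal position in $\Inv{\wo^{J}(c)}$ and argue it must be a cover reflection); and the compatibility of $\Inv{\wo^{J}(c)}$ with restriction to rank-two subsystems, so that the dihedral reduction is legitimate. If a clean reflection-order argument can be found for this dihedral reduction, the remainder is bookkeeping following \Cref{sec:parabolic_tamari_quotient}; if not, a type-by-type proof would leave $E_6,E_7,E_8$ -- where even the $J=\emptyset$ equivalence with $c$-sortability has not been machine-verified -- as the outstanding obstruction, consistent with the status reported after \Cref{def:parabolic_alignment}.
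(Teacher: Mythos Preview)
The statement you are addressing is \Cref{conj:parabolic_cambrian_lattice}, which the paper leaves as an open conjecture; there is no proof in the paper to compare against. The authors note that it holds when $J=\emptyset$ (by Reading's Cambrian theory) and when $\wo^{J}$ is fully commutative (where alignment is vacuous), but state that ``the remaining cases, however, are wide open.''

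Your proposal is a sensible research outline---transport the type-$A$ argument of \Cref{sec:parabolic_tamari_quotient} to the root-theoretic setting---but it is not a proof, and you correctly locate the gap yourself: the analogue of \Cref{lem:down_projection}, and specifically the claim that every aligned $v\leq_{S}w$ omits the offending reflection $t_{a\alpha+b\beta}$, is left unestablished. In type $A$ that step rested entirely on the explicit one-line combinatorics of cases (i)--(v); you propose replacing it by a dihedral reduction with an auxiliary height induction, but you do not carry it out, and nothing in the paper or the cited literature suggests such a uniform argument is available. Worse, \Cref{sec:aligned_expressions} shows that the further generalization to arbitrary reduced expressions---even to $c$-sorting words of elements other than $\wo^{J}$---can fail to give a lattice, so any successful argument must exploit the parabolic structure of $\wo^{J}$ in an essential way, which your sketch does not yet do. Until the down-projection lemma is actually proved in this generality, everything downstream (the analogues of \Cref{lem:convex_fibers,lem:projections_order_preserving,lem:fiber_moves,lem:up_down_moves,prop:congruence}) is conditional, and the proposal remains a plan of attack rather than a proof.
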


\Cref{conj:parabolic_cambrian_lattice} holds in two interesting cases---when $J=\emptyset$ \cite{reading07sortable}*{Theorem~1.1}, and when $J=S\setminus\{s\}$ is chosen such that $\wo^{J}$ is \defn{fully commutative}, \ie any two reduced expressions for $\wo^{J}$ differ only by commutations.  In the latter case, we simply have $\Align\bigl(W^{J},c\bigr)=W^{J}$ for any $c$, and the lattice property follows from the fact that $(W^{J},\leq_{S})$ is an interval in the weak order on $W$~\cite{williams13cataland}*{Section~5.2}.  See \cite{stembridge96on} for more background on fully commutative elements, and a characterization of the sets $J=S\setminus\{s\}$ such that $\wo^{J}$ is fully commutative.  The remaining cases, however, are wide open.

\subsection{Noncrossing Partitions for Parabolic Quotients}
  \label{sec:noncrossing_partitions}
Let us continue with the generalization of noncrossing set partitions to parabolic quotients of finite Coxeter groups.  Recall that the original definition of noncrossing partitions associated with a pair $(W,c)$ is in terms of elements below $c$ in a certain partial order depending on \emph{all} reflections of $W$~\cites{bessis03dual,brady08non}.  It was observed by N.~Reading in \cite{reading07clusters}*{Theorem~6.1} that the $(W,c)$-noncrossing partitions are determined bijectively by the cover reflections of the $c$-aligned elements of $W$.  The next definition is a straight\-forward generalization of this correspondence.  For $w\in W^{J}$, and suppose that $\cov(w)=\{r_{1},r_{2},\ldots,r_{k}\}$, where the order of the cover reflections comes from $\Inv{w_{o}^{J}(c)}$.  Let $\psi(w)=r_{1}r_{2}\cdots r_{k}$.

\begin{definition}\label{def:parabolic_noncrossing}
  Let $(W,S)$ be a finite Coxeter system, let $J\subseteq S$, and let $c\in W$ be a Coxeter element.  The \defn{$(W^{J},c)$-noncrossing partitions} are the elements in the image of $\psi$ restricted to $\Align\bigl(W^{J},c\bigr)$.
\end{definition}

We denote the set of $(W^{J},c)$-noncrossing partitions by $\NC\bigl(W^{J},c\bigr)$.  It follows from \Cref{thm:j_sortables_noncrossings} that this definition coincides with \Cref{def:parabolic_noncrossing_set_partitions} when we consider the symmetric group and the linear Coxeter element.

\subsection{Nonnesting Partitions for Parabolic Quotients}
  \label{sec:nonnesting_partitions}
We now generalize the non\-nesting set partitions to parabolic quotients of finite Coxeter groups.  In the classical setting, nonnesting partitions are defined as follows for any finite irreducible Coxeter group that is not isomorphic to $H_{4}$.  If $W$ is an irreducible crystallographic Coxeter group, \ie we have $m_{ij}\in\{1,2,3,4,6\}$ in \eqref{eq:coxeter_presentation}, then we can partially order the positive roots of $W$ by $\alpha\leq\beta$ if and only if $\beta-\alpha$ can be expressed as a linear combination of the simple roots with only positive coefficients.  This partial order yields the \defn{root poset} of $W$.  Root posets for the remaining finite irreducible Coxeter groups other than $H_{4}$ were suggested in \cite{armstrong09generalized}*{Figure~5.15}.  The $W$-nonnesting partitions of $W$ are then the order ideals in the root poset.  In particular, they do not depend on a Coxeter element.  Recall that the \defn{parabolic root poset} of $W$ with respect to $J$ is defined to be the order filter in the root poset of $W$ induced by the simple reflections not in $J$.

\begin{definition}\label{def:parabolic_nonnesting}
  Let $(W,S)$ be a finite Coxeter system, with $W\neq H_{4}$, and let $J\subseteq S$.  The \defn{$W^{J}$-nonnesting partitions} are the order ideals in the parabolic root poset of $W$ with respect to $J$.
\end{definition}

We denote the set of $W^{J}$-nonnesting partitions by $\NN\bigl(W^{J}\bigr)$.  It is clear that this definition coincides with \Cref{def:parabolic_nonnesting_set_partitions} when we consider the symmetric group.

\subsection{Subword Complexes for Parabolic Quotients}
  \label{sec:subword_complexes}
However, this is not the end of the story.  There is yet another family of combinatorial objects that seems to fit nicely into the presented framework.  Let $(W,S)$ be a finite Coxeter system, let $Q$ be a word on the alphabet $S$, and let $w\in W$.  The \defn{subword complex} $\SW(Q,w)$ is the pure simplicial complex whose facets are the subwords $Q-P$ such that $P$ is a reduced expression for $w$~\cite{knutson04subword}.  For our purpose, the following subword complex shall be emphasized.

\begin{definition}
  Let $(W,S)$ be a finite Coxeter system, let $J\subseteq S$, and let $c\in W$ be a Coxeter element.  The \defn{$(W^{J},c)$-cluster complex} is the subword complex $\SW\bigl(\mathbf{c\wo(c)},\wo^{J}\bigr)$.
\end{definition}

We denote the $(W^{J},c)$-cluster complex by $\SW(W^{J},c)$, and denote its number of facets by $\bigl\lvert\SW(W^J,c)\bigr\rvert$.  The next result states that $\bigl\lvert\SW(\mathfrak{S}_{n}^J,c)\bigr\rvert$, where $c$ is the linear Coxeter element, equals the number appearing (implicitly) in \Cref{thm:parabolic_bijections}.

\begin{proposition}\label{prop:nonnesting_subword}
  Let $n>0$, let $S$ be the set of adjacent transpositions, and let $c$ be the linear Coxeter element.  For $J\subseteq S$, we have $\bigl\lvert\SW(\mathfrak{S}^{J},c)\bigr\rvert=\bigl\lvert\NN_{n}^{J}\bigr\rvert$.
\end{proposition}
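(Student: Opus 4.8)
The target cardinality $\lvert\NN_{n}^{J}\rvert$ has, by the lemma preceding \Cref{thm:j_noncrossings_nonnestings}, already been identified with the number of order ideals of the parabolic root poset $\Phi_{+}^{J}$ (equivalently, with the number of Ferrers shapes fitting inside $\lambda_{n}^{J}$). So the plan is to biject the facets of $\SW(\mathfrak{S}_{n}^{J},c)=\SW\bigl(\mathbf{c\wo(c)},\wo^{J}\bigr)$ with the order ideals of $\Phi_{+}^{J}$. First I would make $\mathbf{c\wo(c)}$ explicit: for the linear Coxeter element the $c$-sorting word of $\wo$ is $\mathbf{\wo(c)}=(s_{1}s_{2}\cdots s_{n-1})(s_{1}s_{2}\cdots s_{n-2})\cdots(s_{1}s_{2})(s_{1})$, so $\mathbf{c\wo(c)}$ is the ``staircase'' word on $n$ strands, of length $(n-1)+\binom{n}{2}$, and a facet of the subword complex is precisely the complement of a reduced subword spelling $\wo^{J}$. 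Passing to the type-$A$ wiring-diagram picture, such a reduced subword is a pseudoline arrangement supported on the staircase network whose underlying permutation is $\wo^{J}$; by \Cref{lem:increasing_regions} the strands whose inputs lie in a common $J$-region stay pairwise parallel, and the combinatorial content of a facet is, for each of the remaining pairs of strands, the slot at which that pair crosses.

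I would then set up a bijection between these data and the order ideals of $\Phi_{+}^{J}$ by induction on the number of $J$-regions, reusing the mechanism of the proof of \Cref{thm:parabolic_bijections}: peeling off the first region $B_{1}=\{1,\ldots,j_{1}\}$ expresses a facet as a facet of the staircase subword complex on $[n]\setminus B_{1}$ (with the induced parabolic data) together with a compatible choice of how the $j_{1}$ strands of $B_{1}$ weave past the ``supported'' strands, exactly mirroring the description of $\Phi_{+}^{J}$ as the order filter contributed by $B_{1}$ sitting over the smaller parabolic root poset $\Phi_{+}^{J'}$ --- with the very same ``supported-column'' compatibility. The point to check is that reducedness of the subword --- no pair of strands crosses twice --- translates into the chosen set of ``crossed'' roots being the complement of an order ideal of $\Phi_{+}^{J}$; the base case $n=1$ is vacuous, and composing the resulting bijection with the identification of order ideals of $\Phi_{+}^{J}$ with $\NN_{n}^{J}$ finishes the argument.

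The delicate part is the second paragraph: one must verify that the restriction-and-recombination of a facet along $B_{1}$ is well defined and bijective and that it respects the order-ideal structure on the nose, which is exactly where the special nature of the linear Coxeter element enters (for other Coxeter elements the wiring picture is different and one does not expect so clean a statement). A more computational alternative sidesteps the geometry entirely: apply the standard deletion recursion $\bigl\lvert\SW(Q's,w)\bigr\rvert=\bigl\lvert\SW(Q',w)\bigr\rvert+\bigl\lvert\SW(Q',ws)\bigr\rvert$, with the second summand present only when $\ell_{S}(ws)<\ell_{S}(w)$, to the letters of $Q=\mathbf{c\wo(c)}$ read from the right, and check that the recursion obtained for $\bigl\lvert\SW(\mathfrak{S}_{n}^{J},c)\bigr\rvert$ coincides with the lattice-path recursion underlying Kreweras's binomial determinant for the number of Ferrers shapes inside $\lambda_{n}^{J}$; there the obstacle is bookkeeping the right descent sets of the intermediate target elements as successive positions of $Q$ are absorbed.
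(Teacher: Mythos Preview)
Your plan is a reasonable outline for a direct bijective argument, and both the wiring-diagram induction and the deletion-recursion alternative are viable routes in principle. But you have written a sketch with acknowledged gaps rather than a proof: the ``delicate part'' you flag (that restriction-and-recombination along $B_{1}$ is well defined, bijective, and matches the order-ideal structure) is exactly the content of the proposition, and you have not carried it out.

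The paper takes a very different and much shorter route. It does not build a bijection from scratch; instead it identifies the parabolic root poset with the Ferrers shape $\lambda_{n}^{J}$ (as already observed before Kreweras's theorem at the end of \Cref{sec:nonnesting}), writes down for any Ferrers shape $\lambda=(\lambda_{1},\ldots,\lambda_{t})$ the explicit permutation
\[
  w(\lambda)=\prod_{i=1}^{t}\prod_{j=\lambda_{1}+2-i-\lambda_{i}}^{\lambda_{1}+1-i}s_{j},
\]
checks that $w(\lambda_{n}^{J})=\wo^{J}$, and then invokes \cite{serrano12maximal}*{Theorem~1.1}, which already establishes that the number of facets of the relevant staircase subword complex for $w(\lambda)$ equals the number of subshapes of $\lambda$. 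Everything you propose to verify by hand is packaged inside that citation.

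What each approach buys: the paper's argument is a two-line reduction to a known theorem, at the cost of a black box. Your approach, if completed, would be self-contained and would yield an explicit bijection tailored to the parabolic setting (and would make visible why the linear Coxeter element is the right choice). If you want to pursue it, the cleanest path is probably your second alternative---the deletion recursion---since it avoids the geometric bookkeeping; but be aware that the intermediate target elements $ws$ are not themselves longest elements of parabolic quotients, so the induction hypothesis must be formulated for arbitrary dominant permutations (equivalently arbitrary Ferrers shapes), which is precisely the generality of the Serrano--Stump result.
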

\begin{proof}
  Recall from \Cref{sec:nonnesting} that the elements in $\NN_{n}^{J}$ are order ideals in the parabolic root poset of $\mathfrak{S}_{n}^{J}$, and this poset can be interpreted as the Ferrers shape $\lambda_n^J$.

  For any Ferrers shape $\lambda=(\lambda_{1},\lambda_{2},\ldots,\lambda_{t})$ we may consider the permutation
  \begin{displaymath}
    w(\lambda) := \prod_{i=1}^{t}\prod_{j=\lambda_{1}+2-i-\lambda_{i}}^{\lambda_{1}+1-i}{s_{j}}.
  \end{displaymath}

  It is straightforward to verify that in the case where $\lambda=\lambda_n^J$ describes the shape of the parabolic root poset of $\mathfrak{S}_{n}^{J}$ the element $w(\lambda)$ is precisely $\wo^{J}$.  The result follows then from \cite{serrano12maximal}*{Theorem~1.1}.  See also \cite{williams13cataland}*{Remark~4.5.8}.
\end{proof}

Since $\SW(W^{J},c)$ is a subword complex, there is a natural poset structure on its facets.  More generally, let $F,F'$ be two facets of a subword complex $\SW(Q,w)$ such that $F-\{i\}=F'-\{j\}$ for some $i\in F$, and some $j\in F'$.  If $i<j$, then we call $F\to F'$ a \defn{flip}, and the facets of $\SW(Q,w)$ together with the set of flips forms an acyclic graph, and therefore its reflexive and transitive closure is the \defn{flip poset} of $\SW(Q,w)$~\cite{knutson04subword}*{Remark~4.5}.

\begin{conjecture}\label{conj:flip_poset_aligned_poset}
  For any finite Coxeter system $(W,S)$, any $J\subseteq S$, and any Coxeter element $c\in W$, the restriction of the weak order to $(W^{J},c)$-aligned elements is isomorphic to the flip poset of $\SW(W^{J},c)$.
\end{conjecture}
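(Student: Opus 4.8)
For $J=\emptyset$ the conjecture is essentially known: granting that \Cref{def:parabolic_alignment} coincides with Reading's original $c$-aligned condition (which is verified outside $E_{6},E_{7},E_{8}$), it follows from N.~Reading and D.~Speyer's identification of the $c$-Cambrian lattice with the weak order on $c$-sortable elements, together with C.~Ceballos, J.-P.~Labb\'e and C.~Stump's realization of the $c$-cluster complex as the subword complex $\SW\bigl(\mathbf{c\wo(c)},\wo\bigr)$. The plan is to bootstrap from this by replacing $\wo$ with $\wo^{J}$ throughout.

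First I would construct a candidate bijection $\Phi\colon\Align(W^{J},c)\to\{\text{facets of }\SW(W^{J},c)\}$. Write $Q=\mathbf{c\wo(c)}$, so that facets of $\SW(W^{J},c)=\SW(Q,\wo^{J})$ are the complements $Q-P$ of the reduced subwords $P$ of $Q$ that spell $\wo^{J}$. Given $w\in\Align(W^{J},c)$, the map should be the direct parabolic analogue of Reading's assignment of a cluster to a $c$-sortable element: using the $c$-sorting word $\mathbf{w(c)}$ and the inversion order $\Inv{\wo^{J}(c)}$ one extracts a distinguished reduced subword $P_{w}$ of $Q$ spelling $\wo^{J}$, and sets $\Phi(w)=Q-P_{w}$. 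For the inverse I would recover $w$ from the root configuration of the facet in the sense of V.~Pilaud and C.~Stump: a facet should encode $\cov(w)$, and one then uses that an aligned element is determined by its set of cover reflections --- equivalently, that the map $\psi$ preceding \Cref{def:parabolic_noncrossing} is injective on $\Align(W^{J},c)$, which holds in type $A$ by \Cref{lem:type_a_alignment} and the bijectivity in \Cref{thm:j_sortables_noncrossings}. Showing $\Phi$ well defined (that $P_{w}$ is genuinely reduced) and bijective is the first substantial task.

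The second task is order-preservation in both directions: a cover $u\lessdot v$ in $\Weak\bigl(\Align(W^{J},c)\bigr)$ should correspond to a flip $\Phi(u)\to\Phi(v)$, and conversely. The subtlety is that such a cover is in general not a cover in $\Weak(W^{J})$, so one must track how $\inv$ changes along a single ``$(W^{J},c)$-sortable step'' and match it with a position exchange $F-\{i\}=F'-\{j\}$ with $i<j$, exactly as in the $J=\emptyset$ case. Since $\SW(Q,\wo^{J})$ still carries the recursive structure underlying the classical argument --- deleting the initial letter $s$ of the fixed reduced word for $c$ from $Q$, which passes from $\wo^{J}$ to the longest element of a smaller parabolic quotient --- the natural route is induction along the recursion $c=sc'$, after showing that $\Align(W^{J},c)$ localizes in the same way: either $s\leq_{S}w$ and $sw$ lies in the aligned set attached to a shifted Coxeter element, or $s\not\leq_{S}w$ and $w$ lies in the aligned set of a maximal parabolic subgroup. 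Keeping these recursions inside ``$W^{J}$-shaped'' objects and showing that the two sides of $\Phi$ recurse through matching flips is the technical core.

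I expect the main obstacle to be twofold. First, unlike the $J=\emptyset$ case, \Cref{conj:parabolic_cambrian_lattice} is not available, so the geometric proof via the Cambrian fan is closed off and one is committed to the combinatorial recursion above; moreover Reading's recursion for $c$-sortability does not obviously restrict to parabolic quotients, and forcing it to do so while preserving alignment is delicate. Second, and more fundamentally, even the numerical identity $\bigl\lvert\Align(W^{J},c)\bigr\rvert=\bigl\lvert\SW(W^{J},c)\bigr\rvert$ is open in general: in type $A$ it follows by combining \Cref{prop:nonnesting_subword}, \Cref{thm:parabolic_bijections} and \Cref{lem:type_a_alignment}, so there the conjecture reduces to the poset isomorphism, but outside type $A$ there is no product or determinantal formula to anchor the count, and establishing this equality uniformly --- presumably by a bijective recursion that simultaneously produces the order isomorphism --- should be regarded as the first real step. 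A shortcut worth exploring is whether V.~Pilaud and C.~Stump's study of brick polytopes of subword complexes $\SW\bigl(\mathbf{c\wo(c)},w\bigr)$ for general $w$, specialized to $w=\wo^{J}$, already presents the flip poset as a ``Cambrian-type'' poset that can be matched directly with $\Weak\bigl(\Align(W^{J},c)\bigr)$.
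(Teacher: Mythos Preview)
The statement you are attempting to prove is labeled a \emph{conjecture} in the paper, not a theorem; the paper provides no proof whatsoever. There is therefore nothing to compare your proposal against. What you have written is not a proof but a research plan, and you are candid about this yourself: you identify ``the first substantial task'', ``the technical core'', and ``the main obstacle'', and you explicitly note that even the numerical identity $\bigl\lvert\Align(W^{J},c)\bigr\rvert=\bigl\lvert\SW(W^{J},c)\bigr\rvert$ is open outside type $A$.

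Your outline is a sensible starting point and is consistent with the evidence the paper assembles (\Cref{prop:nonnesting_subword}, \Cref{thm:parabolic_bijections}, \Cref{lem:type_a_alignment}, and the tables in \Cref{sec:numerology}). But be aware that the conjecture is genuinely open, and your own sketch already flags the two serious gaps: the parabolic recursion for aligned elements is not known to exist, and \Cref{conj:parabolic_cambrian_lattice} is unavailable, so the geometric route through Cambrian fans is closed. Until at least the enumerative equality is established uniformly, the order-isomorphism cannot be completed by the method you describe.
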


\subsection{Numerology}
  \label{sec:numerology}
In this section we describe how the objects defined in \Cref{sec:aligned_elements,sec:noncrossing_partitions,sec:nonnesting_partitions,sec:subword_complexes} conjecturally fit together from an enumerative point of view.  It is well known that for any finite Coxeter group $W$ and any Coxeter element $c\in W$ we have
\begin{displaymath}
  \Bigl\lvert\Align\bigl(W^{\emptyset},c\bigr)\Bigr\rvert = \Bigl\lvert\NC\bigl(W^{\emptyset},c\bigr)\Bigr\rvert = \Bigl\lvert\NN\bigl(W^{\emptyset}\bigr)\Bigr\rvert = \Bigl\lvert\SW(W^{\emptyset},c)\Bigr\rvert,
\end{displaymath}
see for instance \cite{reading07sortable}*{Theorem~6.1} and \cite{armstrong13uniform}, and this cardinality is given by the well-known $W$-Catalan number~\cite{reiner97non}*{Remark~2}.  We have shown in \Cref{thm:parabolic_bijections} and \Cref{prop:nonnesting_subword} that this statement can be generalized to parabolic quotients of the symmetric group and the linear Coxeter element.  It turns out, however, that this statement does not hold in general for any parabolic quotient of any Coxeter group and any Coxeter element.  Take for instance $W=D_{4}$, $J=\{s_{1},s_{2}\}$, and $c=s_{3}s_{2}s_{1}s_{4}$, where $s_{2}$ is the simple reflection that does not commute with any of the other simple reflections.  In this case we have $\bigl\lvert\Align\bigl(W^{J},c\bigr)\bigr\rvert=21$, but $\bigl\lvert\NN\bigl(W^{J}\bigr)\bigr\rvert=22$.

Another related question is, whether the cardinality of the sets $\Align\bigl(W^{J},c\bigr)$ and $\NC\bigl(W^{J},c\bigr)$ is independent of the choice of $c$.  This property is known for $J=\emptyset$, see for instance \cite{reading07clusters}*{Theorem~9.1}, but it turns out once more that it does not hold for any parabolic quotient of any Coxeter group and any Coxeter element.  Take again $W=D_{4}$, $J=\{s_{1},s_{2}\}$, $c=s_{3}s_{2}s_{1}s_{4}$ as above and take $c'=s_{2}s_{3}s_{4}s_{1}$.  We then have $\bigl\lvert\Align\bigl(W^{J},c)\bigr\rvert=21$ and $\bigl\lvert\Align\bigl(W^{J},c')\bigr\rvert=22$.

As a consequence we conclude that, in general, there is no well-defined parabolic Coxeter-Catalan number $\Cat(W^{J})$.  See \Cref{fig:a4data,fig:d4data,fig:h3data,fig:h4data,fig:f4data} for more data.

\begin{table}[htbp]
  \centering
  \scalebox{0.9}{
  \begin{tabular}{|c|rcl|rcl|}
    \hline
    & $\bigl\lvert\Align(A_4^J)\bigr\rvert$ & $=$ & $\bigl\lvert\NC(A_4^J)\bigr\rvert$ & $\bigl\lvert\Align(B_4^J)\bigr\rvert$ & $=$ & $\bigl\lvert\NC(B_4^J)\bigr\rvert$ \\
    $J$ & & $=$ & $\bigl\lvert\SW(A_{4}^{J})\bigr\rvert$ & & $=$ & $\bigl\lvert\SW(B_{4}^{J})\bigr\rvert$ \\
    & & $=$ & $\bigl\lvert\NN(A_4^J)\bigr\rvert$ & & $=$ & $\bigl\lvert\NN(B_4^J)\bigr\rvert$ \\
    \hline
    $\{\}$ & \multicolumn{3}{|c|}{42} & \multicolumn{3}{|c|}{70}\\
    $\{s_1\}$ & \multicolumn{3}{|c|}{28} & \multicolumn{3}{|c|}{50}\\
    $\{s_2\}$ & \multicolumn{3}{|c|}{32} & \multicolumn{3}{|c|}{58}\\
    $\{s_3\}$ & \multicolumn{3}{|c|}{32} & \multicolumn{3}{|c|}{60}\\
    $\{s_4\}$ & \multicolumn{3}{|c|}{28} & \multicolumn{3}{|c|}{56}\\
    $\{s_1, s_2\}$ & \multicolumn{3}{|c|}{14} & \multicolumn{3}{|c|}{30}\\
    $\{s_1, s_3\}$ & \multicolumn{3}{|c|}{22} & \multicolumn{3}{|c|}{44}\\
    $\{s_1, s_4\}$ & \multicolumn{3}{|c|}{19} & \multicolumn{3}{|c|}{41}\\
    $\{s_2, s_3\}$ & \multicolumn{3}{|c|}{17} & \multicolumn{3}{|c|}{40}\\
    $\{s_2, s_4\}$ & \multicolumn{3}{|c|}{22} & \multicolumn{3}{|c|}{48}\\
    $\{s_3, s_4\}$ & \multicolumn{3}{|c|}{14} & \multicolumn{3}{|c|}{28}\\
    $\{s_1, s_2, s_3\}$ & \multicolumn{3}{|c|}{5} & \multicolumn{3}{|c|}{16}\\
    $\{s_1, s_2, s_4\}$ & \multicolumn{3}{|c|}{10} & \multicolumn{3}{|c|}{26}\\
    $\{s_1, s_3, s_4\}$ & \multicolumn{3}{|c|}{10} & \multicolumn{3}{|c|}{22}\\
    $\{s_2, s_3, s_4\}$ & \multicolumn{3}{|c|}{5} & \multicolumn{3}{|c|}{8}\\
    $\{s_1, s_2, s_3, s_4\}$ & \multicolumn{3}{|c|}{1} & \multicolumn{3}{|c|}{1}\\
    \hline
  \end{tabular}}
  \caption{The numbers $\Cat(W^J)$ for $W\in\{A_4,B_4\}$.  For $B_{4}$ the noncommuting simple reflections satisfy $(s_{1}s_{2})^{3}=(s_{2}s_{3})^{3}=(s_{3}s_{4})^{4}=\id$.  We have suppressed the dependence of $\bigl\lvert\Align(W^J,c)\bigr\rvert,\bigl\lvert\NC(W^J,c)\bigl\rvert$ and $\bigl\lvert\SW(W^{J},c)\bigr\rvert$ on $c$, as they agree for all Coxeter elements.}
  \label{fig:a4data}
\end{table}

\begin{table}[htbp]
  \centering
  \begin{tabular}{|c|cccc|c|}
    \hline
    $J$ & \multicolumn{4}{c|}{$\bigl\lvert\Align(D_4^J,c)\bigr\rvert=\bigl\lvert\NC(D_4^J,c)\bigr\rvert=\bigl\lvert\SW(D_{4}^{J},c)\bigr\rvert$} & $\bigl\lvert\NN(D_4^J)\bigr\rvert$ \\
    \hline
    & $s_2s_3s_4s_1$ & $s_1s_2s_3s_4$ & $s_3s_2s_1s_4$ & $s_4s_2s_3s_1$  & \\ \hline
    $\{\}$ & \multicolumn{4}{c|}{50} & 50\\
    $\{s_1\}$ & \multicolumn{4}{c|}{36} & 36\\
    $\{s_2\}$ & \multicolumn{4}{c|}{42} & 42\\
    $\{s_3\}$ & \multicolumn{4}{c|}{36} & 36\\
    $\{s_4\}$ & \multicolumn{4}{c|}{36} & 36\\
    \rowcolor[gray]{.8} $\{s_1, s_2\}$ & 22 & 22 & 21 & 21 & 22\\
    $\{s_1, s_3\}$ & \multicolumn{4}{c|}{27} & 27\\
    $\{s_1, s_4\}$ &  \multicolumn{4}{c|}{27} & 27\\
    \rowcolor[gray]{.8}$\{s_2, s_3\}$ & 22 & 21 & 22 & 21 & 22\\
    \rowcolor[gray]{.8}$\{s_2, s_4\}$ & 22 & 21 & 21 & 22 & 22\\
    $\{s_3, s_4\}$ & \multicolumn{4}{c|}{27} & 27\\
    $\{s_1, s_2, s_3\}$ & \multicolumn{4}{c|}{8} & 8\\
    $\{s_1, s_2, s_4\}$ & \multicolumn{4}{c|}{8} & 8\\
    $\{s_1, s_3, s_4\}$ & \multicolumn{4}{c|}{21} & 21\\
    $\{s_2, s_3, s_4\}$ & \multicolumn{4}{c|}{8} & 8\\
    $\{s_1, s_2, s_3, s_4\}$ & \multicolumn{4}{c|}{1} & 1\\
    \hline
  \end{tabular}
  \caption{The various numbers $\Cat(D_4^J)$.  Here $s_{2}$ is the unique simple reflection that does not commute with the other simple reflections.  The values of $\bigl\lvert\Align(D_{4}^J,c)\bigr\rvert=\bigl\lvert\NC(D_{4}^J,c)\bigr\lvert=\bigl\lvert\SW(D_{4}^{J},c)\bigr\rvert$ are equal for $c$ and $c^{-1}$.}
  \label{fig:d4data}
\end{table}

\begin{table}[htbp]
  \centering
  \begin{tabular}{|c|c|} \hline
    $J$ & $\bigl\lvert\Align(H_3^J)\bigr\rvert=\bigl\lvert\NC(H_3^J)\bigr\rvert=\bigl\lvert\SW(H_{3}^{J})\bigr\rvert=\bigl\lvert\NN(H_3^J)\bigl\rvert$ \\
    \hline
    $\{\}$ & 32\\
    $\{s_1\}$ & 27\\
    $\{s_2\}$ & 28\\
    $\{s_3\}$ & 25\\
    $\{s_1, s_2\}$ & 12\\
    $\{s_1, s_3\}$ & 22\\
    $\{s_2, s_3\}$ & 18\\
    $\{s_1, s_2, s_3\}$ & 1\\
    \hline
  \end{tabular}
  \caption{The numbers $\Cat(H_3^J)$, where the noncommuting simple reflections satisfy $(s_{1}s_{2})^{5}=(s_{2}s_{3})^{3}=\id$.  As in \Cref{fig:a4data}, we have suppressed the dependence of $\bigl\lvert\Align(H_{3}^J,c)\bigr\rvert,\bigl\lvert\NC(H_{3}^J,c)\bigr\rvert$ and $\bigl\lvert\SW(H_{3}^{J},c)\bigr\rvert$ on $c$.}
  \label{fig:h3data}
\end{table}

\begin{table}[htbp]
  \centering
  \begin{tabular}{|c|c|c|}
    \hline
    $J$ & $\bigl\lvert\Align(H_4^J)\bigr\rvert=\bigl\lvert\NC(H_4^J)\bigr\rvert=\bigl\lvert\SW(H_{4}^{J})\bigr\rvert$ & $\bigl\lvert\NN(H_4^J)\bigr\rvert$ \\ \hline
    $\{\}$ & 280 & 280\\
    $\{s_1\}$ & 266 & 266\\
    $\{s_2\}$ & 270 & 270\\
    $\{s_3\}$ & 266 & 266\\
    $\{s_4\}$ & 248 & 248\\
    \rowcolor[gray]{.8}$\{s_1, s_2\}$ & 209 & 210\\
    $\{s_1, s_3\}$ & 256 & 256\\
    $\{s_1, s_4\}$ & 239 & 239\\
    $\{s_2, s_3\}$ & 245 & 245\\
    $\{s_2, s_4\}$ & 242 & 242\\
    $\{s_3, s_4\}$ & 216 & 216\\
    \rowcolor[gray]{.8}$\{s_1, s_2, s_3\}$ & 95 & 106\\
    \rowcolor[gray]{.8}$\{s_1, s_2, s_4\}$ & 197 & 198\\
    $\{s_1, s_3, s_4\}$ & 212 & 212\\
    $\{s_2, s_3, s_4\}$ & 191 & 191\\
    $\{s_1, s_2, s_3, s_4\}$ & 1 & 1\\
    \hline
  \end{tabular}
  \caption{The various numbers $\Cat(H_4^J)$, where the noncommuting simple reflections satisfy $(s_{1}s_{2})^{5}=(s_{2}s_{3})^{3}=(s_{3}s_{4})^{3}=\id$.  As in \Cref{fig:a4data}, we have suppressed the dependence of $\bigl\lvert\Align(H_{4}^J,c)\bigr\rvert,\bigl\lvert\NC(H_{4}^J,c)\bigr\rvert$ and $\bigl\lvert\SW(H_{4}^{J},c)\bigr\rvert$ on $c$.  The values for $\bigl\lvert\NN(H_{4}^J)\bigr\rvert$ were computed using the four candidate ``root posets'' in Figure 5 of~\cite{cuntz15root}, all of which gave the same numbers.}
  \label{fig:h4data}
\end{table}

\begin{table}[htbp]
  \centering
  \begin{tabular}{|c|cccc|c|} \hline
    $J$ & \multicolumn{4}{c|}{$\bigl\lvert\Align(F_4^J,c)\bigr\rvert=\bigl\lvert\NC(F_4^J,c)\bigr\rvert=\bigl\lvert\SW(F_{4}^{J},c)\bigr\rvert$} & $|\NN(F_4^J)|$ \\
    \hline
    & $s_2s_3s_4s_1$ & $s_1s_2s_3s_4$ & $s_3s_2s_1s_4$ & $s_4s_2s_3s_1$ & \\
    \hline
    $\{\}$ & \multicolumn{4}{c|}{105} & 105\\
    $\{s_1\}$ &\multicolumn{4}{c|}{85} & 85\\
    $\{s_2\}$ & \multicolumn{4}{c|}{95} & 95\\
    $\{s_3\}$ & \multicolumn{4}{c|}{95} & 95\\
    $\{s_4\}$ &\multicolumn{4}{c|}{85} & 85\\
    $\{s_1, s_2\}$ & \multicolumn{4}{c|}{65} & 65\\
    $\{s_1, s_3\}$ & \multicolumn{4}{c|}{79} & 79\\
    $\{s_1, s_4\}$ &  \multicolumn{4}{c|}{71} & 71 \\
    \rowcolor[gray]{.8}$\{s_2, s_3\}$ & 62 & 57 & 62 & 62 & 63 \\
    $\{s_2, s_4\}$ & \multicolumn{4}{c|}{79} & 79\\
    $\{s_3, s_4\}$ & \multicolumn{4}{c|}{65} & 65\\
    \rowcolor[gray]{.8} $\{s_1, s_2, s_3\}$ & 23 & 23 & 23 & 23 & 24\\
    $\{s_1, s_2, s_4\}$ & \multicolumn{4}{c|}{57} & 57\\
    $\{s_1, s_3, s_4\}$ & \multicolumn{4}{c|}{57} & 57\\
    $\{s_2, s_3, s_4\}$ & \multicolumn{4}{c|}{23} & 23\\
    $\{s_1, s_2, s_3, s_4\}$ & \multicolumn{4}{c|}{1} & 1 \\
    \hline
  \end{tabular}
  \caption{The various numbers $\Cat(F_4^J)$.  Here we have the relations $(s_{1}s_{2})^{3}=(s_{2}s_{3})^{4}=(s_{3}s_{4})^{3}=\id$ between noncommuting simple reflections.  The values of $\bigl\lvert\Align(W^J,c)\bigr\rvert=\bigl\lvert\NC(W^J,c)\bigr\rvert=\bigl\lvert\SW(F_{4}^{J},c)\bigr\rvert$ are equal for $c$ and $c^{-1}$.}
  \label{fig:f4data}
\end{table}

In studying these tables, we observe that for the groups $A_{4},B_{4},H_{3}$ (and trivially for the dihedral groups) there seem to exist well-defined parabolic Coxeter-Catalan numbers.  Further computer experiments suggest the following conjecture.

\begin{conjecture}\label{conj:parabolic_catalan_exceptional}
  Let $(W,S)$ be a Coxeter system with $W\in\bigl\{A_{n},B_{n},H_{3},I_{2}(m)\bigr\}$, and let $J\subseteq S$.  For any Coxeter element $c\in W$ the cardinalities of the sets $\Align\bigl(W^{J},c\bigr)$, $\NC\bigl(W^{J},c\bigr)$, $\SW\bigl(W^{J},c\bigr)$, and $\NN\bigl(W^{J}\bigr)$ are equal, and hence do not depend on the choice of $c$.
\end{conjecture}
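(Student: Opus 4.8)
The plan is to separate the four cases. The dihedral groups $I_{2}(m)$ are immediate: every proper parabolic quotient is a chain and the objects are trivially matched. For $H_{3}$ there are only finitely many pairs $(J,c)$, so the statement can be verified by computer. The content of the conjecture thus lies entirely in the two infinite families $A_{n}$ and $B_{n}$, and there it splits into two logically independent claims: \textbf{(I)} the three cardinalities $\bigl\lvert\Align(W^{J},c)\bigr\rvert$, $\bigl\lvert\NC(W^{J},c)\bigr\rvert$, $\bigl\lvert\SW(W^{J},c)\bigr\rvert$ agree and are independent of $c$; and \textbf{(II)} this common value equals $\bigl\lvert\NN(W^{J})\bigr\rvert$, which by \Cref{def:parabolic_nonnesting} involves no $c$ at all.

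For type $A$ with the \emph{linear} Coxeter element, both (I) and (II) are already in hand: \Cref{thm:parabolic_bijections} supplies $\mathfrak{S}_{n}^{J}(231)\simeq\NC_{n}^{J}\simeq\NN_{n}^{J}$, \Cref{lem:type_a_alignment} identifies $\Align(\mathfrak{S}_{n}^{J},c)$ with $\mathfrak{S}_{n}^{J}(231)$ up to inversion, and \Cref{prop:nonnesting_subword} gives $\bigl\lvert\SW(\mathfrak{S}_{n}^{J},c)\bigr\rvert=\bigl\lvert\NN_{n}^{J}\bigr\rvert$. For type $A$, therefore, only the $c$-independence in (I) remains, and my approach is to lift N.~Reading's recursive description of $c$-sortable elements \cite{reading07sortable} to parabolic quotients. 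Fixing an initial simple reflection $s$ of $c$, I would partition $\Align(W^{J},c)$ according to whether $s\in\inv(w)$, show that the elements with $s\notin\inv(w)$ biject with $\Align\bigl((W_{\langle s\rangle})^{J'},c'\bigr)$, where $W_{\langle s\rangle}$ is the maximal parabolic omitting $s$, $c'$ is the induced Coxeter element and $J'$ a restricted subset, and show that the elements with $s\in\inv(w)$ biject with $\Align\bigl(W^{J''},scs\bigr)$ for a subset $J''$ obtained by absorbing $s$. This yields a recursion for $\bigl\lvert\Align(W^{J},c)\bigr\rvert$ in lower rank and in the conjugate element $scs$; an induction on rank and length, using that the same recursion applies to every initial letter of $scs$, then forces $c$-independence. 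Transporting the recursion through the map $\psi$ of \Cref{def:parabolic_noncrossing} simultaneously proves $\bigl\lvert\Align(W^{J},c)\bigr\rvert=\bigl\lvert\NC(W^{J},c)\bigr\rvert$ and its $c$-independence; and $\bigl\lvert\SW(W^{J},c)\bigr\rvert$ is $c$-independent because for different $c$ the complexes $\SW(\mathbf{c\wo(c)},\wo^{J})$ are related by the flip-and-rotation operations of \cite{knutson04subword}, exactly as in the $J=\emptyset$ Cambrian/cluster correspondence.

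For type $B_{n}$ the most economical route is to fold from $A_{2n-1}$: realize $B_{n}$ inside $A_{2n-1}$ via the order-two diagram automorphism, model $B_{n}$-noncrossing and $B_{n}$-nonnesting partitions as centrally symmetric elements of $\NC_{2n}^{\widetilde{J}}$ and $\NN_{2n}^{\widetilde{J}}$ for the symmetrized subset $\widetilde{J}$, and verify that the bijections of \Cref{thm:j_sortables_noncrossings} and \Cref{thm:j_noncrossings_nonnestings} are equivariant and hence restrict to the symmetric objects; this gives (II) for the folded Coxeter element. The $c$-independence in (I) then follows either by running the recursion of the previous paragraph directly over $B_{n}$ with signed-permutation models of the four objects, or---granting \Cref{conj:parabolic_cambrian_lattice} in type $B$---by Cambrian-rotation arguments. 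The remaining type $H_{3}$ admits no such folding, which is why it is handled computationally.

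I expect the principal obstacle to be the parabolic bookkeeping in the sortable-element recursion. In the classical $J=\emptyset$ case the two pieces live cleanly over $W_{\langle s\rangle}$ and over $W$ with $c$ replaced by $scs$; in the parabolic setting one must track simultaneously how $\wo^{J}$, the inversion order $\Inv{\wo^{J}(c)}$, and the forcing condition of \Cref{def:parabolic_alignment} transform, and---most delicately---verify that the modified data $(W_{\langle s\rangle},J')$ and $(W,J'')$ again describe genuine \emph{parabolic quotients} of the smaller groups rather than some wider class, since otherwise the induction does not close. A second, genuinely type-dependent difficulty is claim (II): the identification $\bigl\lvert\SW(W^{J},c)\bigr\rvert=\bigl\lvert\NN(W^{J})\bigr\rvert$ rests, as in \Cref{prop:nonnesting_subword}, on the parabolic root poset of $W^{J}$ being the shape of a bounded region inside a cluster complex, and this is precisely the feature that fails outside the coincidental types---so any proof of (II) must use the ``every wall is a Coxeter complex'' characterization of $A_{n},B_{n},H_{3},I_{2}(m)$ in an essential way, presumably through the theory of minuscule and rectangular posets and the enumerative results of R.~Serrano and C.~Stump \cite{serrano12maximal} already exploited for type $A$.
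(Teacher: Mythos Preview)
The statement you are attempting to prove is labeled a \emph{conjecture} in the paper, and the paper offers no proof: it presents computational evidence in \Cref{fig:a4data,fig:d4data,fig:h3data,fig:h4data,fig:f4data} and says only that ``further computer experiments suggest the following conjecture.'' So there is no paper proof to compare against; your proposal is a research plan for an open problem.

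As a plan it is sensible in outline, but it is not a proof, and the gaps you yourself flag are genuine. The crucial step is the parabolic lift of Reading's sortable recursion: you must show that deleting or absorbing an initial simple reflection $s$ sends $\Align(W^{J},c)$ to $\Align$-sets of \emph{bona fide} parabolic quotients $(W_{\langle s\rangle})^{J'}$ and $W^{J''}$, with the forcing condition of \Cref{def:parabolic_alignment} transported correctly along $\Inv{\wo^{J}(c)}$. You assert this but do not verify it, and in fact the interaction between $s$ and the $J$-regions is exactly where the non-coincidental types break (witness the $D_{4}$ rows in \Cref{fig:d4data}), so any argument here must use a type-$A$/type-$B$ feature explicitly. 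Your $c$-independence claim for $\bigl\lvert\SW(W^{J},c)\bigr\rvert$ via ``flip-and-rotation operations'' is also only a gesture: the rotation arguments for $J=\emptyset$ rely on $\wo$ and the word $\mathbf{c\wo(c)}$ in a way that does not obviously survive replacing $\wo$ by $\wo^{J}$.

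For type $B$ you at one point write ``granting \Cref{conj:parabolic_cambrian_lattice} in type $B$''; you cannot invoke another open conjecture from the same paper in a purported proof. The folding route is more promising, but you would need to check that the explicit bijections of \Cref{thm:j_sortables_noncrossings,thm:j_noncrossings_nonnestings} are actually equivariant for the diagram automorphism of $A_{2n-1}$---the second of these is an inductive construction on $J$-regions, and its interaction with the central symmetry is not obvious. In short: the dihedral and $H_{3}$ cases are fine, the linear-$c$ type-$A$ case is already in the paper, but the $c$-independence in type $A$ and the entirety of type $B$ remain at the level of strategy rather than argument.
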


The groups appearing in \Cref{conj:parabolic_catalan_exceptional} are sometimes referred to as the ``co\-incidental types'', because they share remarkable features that distinguish them from the other finite Coxeter groups.  Some of these features can be found in \cite{fomin05generalized}*{Theorems~8.5~and~10.2}, \cite{miller15foulkes}*{Theorem~14}, \cite{muehle15sb}*{Theorem~2}, \cite{reading08chains}, and \cite{williams13cataland}*{Remark~3.1.26}.  Since for these groups the families of parabolic aligned elements, parabolic noncrossing and nonnesting partitions are equinumerous, we are tempted to define a parabolic Coxeter-Catalan number as follows.

\begin{definition}\label{def:parabolic_catalan_number}
  Let $(W,S)$ be a Coxeter system with $W\in\{A_{n},B_{n},H_{3},I_{2}(m)\}$, and let $J\subseteq S$.  Define the \defn{parabolic Coxeter-Catalan number} by
  \begin{displaymath}
    \Cat(W^{J})=\bigl\lvert\NN(W^{J})\bigr\rvert.
  \end{displaymath}
\end{definition}

\subsection{Aligned Elements for Arbitrary Reduced Expressions}
  \label{sec:aligned_expressions}
Observe that the definition of the $(W^{J},c)$-aligned elements from \Cref{def:parabolic_alignment} does not so much depend on the fact that we consider a parabolic quotient of a Coxeter group, rather than on the particular reduced expression of $\wo^{J}$ we have chosen.  More precisely, the alignment property depends on the inversion order $\Inv{\wo^{J}}$.  This suggests the following definition.

\begin{definition}\label{def:general_alignment}
  Let $(W,S)$ be a Coxeter system, let $w\in W$, and fix a reduced expression $\mathbf{w}$ for $w$.  An element $x\leq_{S}w$ is \defn{$\mathbf{w}$-aligned} if whenever we have $t_{\alpha}<t_{a\alpha+b\beta}<t_{\beta}$ in the inversion order $\Inv{w}$ for $a,b$ positive integers, then $t_{a\alpha+b\beta}\in\cov(x)$ implies $t_{\alpha}\in\inv(x)$.
\end{definition}

In particular, this definition requires that $t_{\alpha},t_{a\alpha+b\beta},t_{\beta}\in\inv(w)$.  Let $\Align(W,\mathbf{w})$ denote the set of all $\mathbf{w}$-aligned elements of $W$.  Note that at this level of generality we do not even need to require that $W$ is finite, and we can pick any element $w\in W$.  It is immediate that if $W$ is finite and $c\in W$ is a Coxeter element, then $x\in W$ is $c$-aligned if and only if it is $\mathbf{\wo(c)}$-aligned.  Let us illustrate \Cref{def:general_alignment} with an example.

\begin{figure}
  \centering
  \begin{tikzpicture}
    \def\x{1.75};
    \def\y{1.25};
    \draw(3.5*\x,1*\y) node[fill=gray!50!white](n1){$e$};
    \draw(2.5*\x,2*\y) node[fill=gray!50!white](n2){$s_{1}$};
    \draw(3.5*\x,2*\y) node[fill=gray!50!white](n3){$s_{0}$};
    \draw(4.5*\x,2*\y) node[fill=gray!50!white](n4){$s_{3}$};
    \draw(1.5*\x,3*\y) node(n5){$s_{1}s_{0}$};
    \draw(2.5*\x,3*\y) node[fill=gray!50!white](n6){$s_{0}s_{1}$};
    \draw(3.5*\x,3*\y) node[fill=gray!50!white](n7){$s_{1}s_{3}$};
    \draw(4.5*\x,3*\y) node[fill=gray!50!white](n8){$s_{0}s_{3}$};
    \draw(5.5*\x,3*\y) node(n9){$s_{3}s_{0}$};
    \draw(1*\x,4*\y) node[fill=gray!50!white](n10){$s_{1}s_{0}s_{3}$};
    \draw(2*\x,4*\y) node[fill=gray!50!white](n11){$s_{0}s_{1}s_{0}$};
    \draw(3*\x,4*\y) node[fill=gray!50!white](n12){$s_{0}s_{1}s_{3}$};
    \draw(4*\x,4*\y) node(n13){$s_{1}s_{3}s_{0}$};
    \draw(5*\x,4*\y) node(n14){$s_{3}s_{0}s_{1}$};
    \draw(6*\x,4*\y) node[fill=gray!50!white](n15){$s_{0}s_{3}s_{0}$};
    \draw(1.5*\x,5*\y) node[fill=gray!50!white](n16){$s_{0}s_{1}s_{0}s_{3}$};
    \draw(2.5*\x,5*\y) node(n17){$s_{1}s_{0}s_{3}s_{0}$};
    \draw(3.5*\x,5*\y) node(n18){$s_{0}s_{1}s_{3}s_{0}$};
    \draw(4.5*\x,5*\y) node(n19){$s_{1}s_{3}s_{0}s_{1}$};
    \draw(5.5*\x,5*\y) node(n20){$s_{0}s_{3}s_{0}s_{1}$};
    \draw(2.5*\x,6*\y) node[fill=gray!50!white](n21){$s_{0}s_{1}s_{0}s_{3}s_{0}$};
    \draw(3.5*\x,6*\y) node(n22){$s_{1}s_{0}s_{3}s_{0}s_{1}$};
    \draw(4.5*\x,6*\y) node[fill=gray!50!white](n23){$s_{0}s_{1}s_{3}s_{0}s_{1}$};
    \draw(3.5*\x,7*\y) node[fill=gray!50!white](n24){$s_{0}s_{1}s_{0}s_{3}s_{0}s_{1}$};
    \draw(5*\x,7*\y) node[fill=gray!50!white](n25){$s_{1}s_{0}s_{3}s_{0}s_{1}s_{2}$};
    \draw(5*\x,8*\y) node[fill=gray!50!white](n26){$s_{0}s_{1}s_{0}s_{3}s_{0}s_{1}s_{2}$};
    \draw(n1) -- (n2);
    \draw(n1) -- (n3);
    \draw(n1) -- (n4);
    \draw(n2) -- (n5);
    \draw(n2) -- (n7);
    \draw(n3) -- (n6);
    \draw(n3) -- (n8);
    \draw(n4) -- (n7);
    \draw(n4) -- (n9);
    \draw(n5) -- (n10);
    \draw(n5) -- (n11);
    \draw(n6) -- (n11);
    \draw(n6) -- (n12);
    \draw(n7) -- (n13);
    \draw(n8) -- (n12);
    \draw(n8) -- (n15);
    \draw(n9) -- (n14);
    \draw(n9) -- (n15);
    \draw(n10) -- (n16);
    \draw(n10) -- (n17);
    \draw(n11) -- (n16);
    \draw(n12) -- (n18);
    \draw(n13) -- (n17);
    \draw(n13) -- (n19);
    \draw(n14) -- (n19);
    \draw(n14) -- (n20);
    \draw(n15) -- (n20);
    \draw(n16) -- (n21);
    \draw(n17) -- (n22);
    \draw(n18) -- (n21);
    \draw(n18) -- (n23);
    \draw(n19) -- (n22);
    \draw(n20) -- (n23);
    \draw(n21) -- (n24);
    \draw(n22) -- (n24);
    \draw(n22) -- (n25);
    \draw(n23) -- (n24);
    \draw(n24) -- (n26);
    \draw(n25) -- (n26);
  \end{tikzpicture}
  \caption{The weak order interval $[e,w]$, where $w$ is given by the reduced expression $\mathbf{w}=s_{0}s_{1}s_{0}s_{3}s_{0}s_{1}s_{2}$ in the affine Coxeter group $\tilde{A}_{3}$.  The $\mathbf{w}$-aligned elements are highlighted in gray.}
  \label{fig:affine_a3_poset}
\end{figure}

\begin{example}\label{ex:affine_a3_aligned}
  Let $W=\tilde{A}_{3}$ be the affine symmetric group of rank $4$.  Denote its simple reflections by $s_{0},s_{1},s_{2},s_{3}$ such that the following Coxeter relations hold:
  \begin{displaymath}
    (s_{0}s_{1})^{3} = (s_{1}s_{2})^{3} = (s_{2}s_{3})^{3} = (s_{0}s_{3})^{3} = (s_{0}s_{2})^{2} = (s_{1}s_{3})^{2} = \id.
  \end{displaymath}
  Pick $w\in\tilde{A}_{3}$ given by the reduced expression $\mathbf{w}=s_{0}s_{1}s_{0}s_{3}s_{0}s_{1}s_{2}$.  The weak order interval $[\id,w]$ is shown in \Cref{fig:affine_a3_poset}.  The inversion order $\Inv{w}$ is given by
  \begin{displaymath}
    s_{0} < s_{0}s_{1}s_{0} < s_{1} < s_{1}s_{0}s_{3}s_{0}s_{1} < s_{0}s_{3}s_{0} < s_{3} < s_{1}s_{0}s_{3}s_{0}s_{1}s_{2}s_{1}s_{0}s_{3}s_{0}s_{1}.
  \end{displaymath}
  Let us denote these reflections by $t_{1},t_{2},t_{3},t_{4},t_{5},t_{6},t_{7}$ in that order; and let $\beta_{i}$ be the positive root corresponding to $t_{i}$ for $i\in[7]$.  The roots $\beta_{1},\beta_{3},\beta_{6}$ are simple; and we have the following decompositions:
  \begin{displaymath}\begin{aligned}
    & \beta_{2} = \beta_{1}+\beta_{3}, && \beta_{4} = \beta_{2}+\beta_{6} = \beta_{3}+\beta_{5}, && \beta_{5} = \beta_{1}+\beta_{6}.
  \end{aligned}\end{displaymath}
  The root $\beta_{7}$ is not simple, but cannot be written as a (nontrivial) linear combination of any of the $\beta_{i}$'s.  In view of \Cref{def:general_alignment} an element $x\leq_{S}w$ is $\mathbf{w}$-aligned if whenever it has $t_{2}$, or $t_{4}$, or $t_{5}$ as a cover reflection, then it needs to have $t_{1}$, or $t_{2}$ and $t_{3}$, or $t_{1}$, respectively, as inversions.  This is satisfied for the elements in \Cref{fig:affine_a3_poset} highlighted in gray.  If we consider $x=s_{1}s_{0}s_{3}s_{0}$, then we can check that $\cov(x)=\{t_{2},t_{6}\}$ and $\inv(x)=\{t_{2},t_{3},t_{4},t_{6}\}$.  Therefore, $x$ is not $\mathbf{w}$-aligned.
\end{example}

It is tempting to conjecture that the weak order on $\mathbf{w}$-aligned elements always forms a lattice (and therefore to extend \Cref{conj:parabolic_cambrian_lattice} to the more general setting of \Cref{def:general_alignment}).  However, this turns out to be false, even in finite type.  If we take $W=A_{4}$ and $w\in W$ given by the reduced expression $\mathbf{w}=s_{2}s_{1}s_{2}s_{3}s_{4}s_{2}s_{1}$, then there are twenty $\mathbf{w}$-aligned elements, but $\Weak\bigl(\Align(A_4,\mathbf{w})\bigr)$ is not a lattice.  So far, \Cref{ex:affine_a3_aligned} shows the smallest poset of $\mathbf{w}$-aligned elements known to us that is not a lattice under weak order.  (Note that the elements $s_{0}s_{1}s_{0}s_{3}s_{0}s_{1}$ and $s_{1}s_{0}s_{3}s_{0}s_{1}s_{2}$ have two maximal $\mathbf{w}$-aligned lower bounds, namely $s_{1}s_{0}s_{3}$ and $s_{1}s_{3}$.)

We have not been able to determine necessary and sufficient conditions on $W$ and $\mathbf{w}$ such that $\Weak\bigl(\Align(W,\mathbf{w})\bigr)$ is a lattice.  It turns out that the next best candidate, namely the conjecture that $\Weak\bigl(\Align(W,\mathbf{w(c)})\bigr)$ for some Coxeter element $c$ is always a lattice, is also wrong.  Consider again $W=A_{4}$ and $\mathbf{w}=s_{3}s_{4}s_{1}s_{3}s_{2}s_{1}s_{3}s_{4}$.  This is a $s_{3}s_{4}s_{2}s_{1}$-sorting word, but the corresponding weak order poset is not a lattice.

We are, however, not aware of any counterexamples in rank $3$.

\section*{Acknowledgements}

The second author would like to thank D.~Stanton, V.~Reiner, and H.~Thomas for their support and guidance, N.~Reading and C.~Ceballos for helpful conversations, and C.~Arreche for his proofreading.

\bibliography{literature_essential}

\end{document}